\documentclass[12pt]{article}

\frenchspacing \setlength{\parskip}{9pt plus 3pt minus 1pt}
\setlength{\parindent}{0pt}

\usepackage{amsmath}
\usepackage{amssymb,latexsym}
\usepackage{amsthm}
\usepackage{enumerate}
\usepackage[hang,flushmargin]{footmisc}
\usepackage[margin=0.35in,format=hang,font=small,labelfont=bf]{caption}
\usepackage{graphicx,tikz}
\usepackage[colorlinks=true,linkcolor=blue,citecolor=blue]{hyperref}
\usepackage[greek,english]{babel}

\setlength{\textwidth}{6.3in}
\setlength{\textheight}{8.7in}
\setlength{\topmargin}{0pt}
\setlength{\headsep}{0pt}
\setlength{\headheight}{0pt}
\setlength{\oddsidemargin}{0pt}
\setlength{\evensidemargin}{0pt}

\makeatletter
\newtheorem*{rep@theorem}{\rep@title}
\newcommand{\newreptheorem}[2]{
\newenvironment{rep#1}[1]{
\def\rep@title{#2~\ref{##1}}
\begin{rep@theorem}}
{\end{rep@theorem}}}
\makeatother

\newtheorem{thm}{Theorem}[section]
\newreptheorem{thm}{Theorem}
\newtheorem{cor}[thm]{Corollary}
\newreptheorem{cor}{Corollary}
\newtheorem{lemma}[thm]{Lemma}
\newreptheorem{lemma}{Lemma}

\newtheorem{obs}[thm]{Observation}

\newcommand{\+}{\hspace{0.1 em}}
\newcommand{\floor}[1]{\left\lfloor #1 \right\rfloor}
\newcommand{\ceil}[1]{\left\lceil #1 \right\rceil}
\newcommand{\half}{\frac{1}{2}}

\newcommand{\limkinfty}{\lim\limits_{k\rightarrow\infty}}
\newcommand{\tr}{\mathrm{tr}}
\newcommand{\dg}{d}



\newcommand{\CCC}{\mathcal{C}}

\newcommand{\SSS}{\mathcal{S}}
\newcommand{\TTT}{\mathcal{T}}

\newcommand{\WWW}{\mathcal{W}}

\newcommand{\WB}{W^{{}^{\text{\textsf{\textbf{B}}}}}}
\newcommand{\WWWB}{\WWW^{{}^{\text{\textsf{\textbf{B}}}}}}

\newcommand{\WWWE}{\WWW^{{}^{\text{\textsf{\textbf{E}}}}}}


\newcommand{\Hzero}{H_0}
\newcommand{\vis}[2]{\Psi(#1,#2)}

\newcommand{\xx}{\mbox{$\ast$-$\ast$}}
\newcommand{\ox}{\mbox{1-$\ast$}}
\newcommand{\xo}{\mbox{$\ast$-1}}
\newcommand{\oo}{\mbox{1-1}}
\newcommand{\xinit}{\mbox{$\ast$-initial}}

\newcommand{\Grid}{\mathrm{Grid}}

\newcommand{\Gridhash}{\Grid^\#}
\newcommand{\Geom}{\mathrm{Geom}}
\newcommand{\gr}{\mathrm{gr}}
\newcommand{\grup}{\overline{\gr}}
\newcommand{\grlow}{\underline{\gr}}
\newcommand{\av}{\mathrm{Av}}


\newenvironment{gridmx}[1][{}] {\Grid^{#1}\!\left(\!\begin{smallmatrix}} {\end{smallmatrix}\!\right)}

\newcommand{\gclass}[4][0.25]  
{
  \begin{tikzpicture}[scale=#1]
    \draw[very thin] (0,0) grid (#2,#3);
    #4
  \end{tikzpicture}
}

\newcommand{\gcrow}[2]  
{
  \foreach \d [count=\i,evaluate=\d as \shade using 100*\d*\d] in {#2}
    \draw[very thick,color=black!\shade!white] (\i-.82,#1+.5-.32*\d)--(\i-.18,#1+.5+.32*\d);
}

\newcommand{\gxrow}[2]  
{
  \foreach \d [count=\i,evaluate=\d as \shade using 100*\d*\d] in {#2}
  {
    \draw[thick,color=black!\shade!white] (\i-.82,#1+.18)--(\i-.18,#1+.82);
    \draw[thick,color=black!\shade!white] (\i-.82,#1+.82)--(\i-.18,#1+.18);
  }
}


\newcommand{\gctwo}[3]   {\raisebox{ -4.00pt}{\gclass{#1}{2}
                            {\gcrow{1}{#2}\gcrow{0}{#3}}}}

\newcommand{\gcfive}[6]  {\raisebox{-15.00pt}{\gclass{#1}{5}
                            {\gcrow{4}{#2}\gcrow{3}{#3}\gcrow{2}{#4}\gcrow{1}{#5}\gcrow{0}{#6}}}}

\newcommand{\gxone}[2]   {\raisebox{ -0.33pt}{\gclass{#1}{1}
                            {\gxrow{0}{#2}}}}
\newcommand{\gxtwo}[3]   {\raisebox{ -4.00pt}{\gclass{#1}{2}
                            {\gxrow{1}{#2}\gxrow{0}{#3}}}}
\newcommand{\gxthree}[4] {\raisebox{ -7.67pt}{\gclass{#1}{3}
                            {\gxrow{2}{#2}\gxrow{1}{#3}\gxrow{0}{#4}}}}
\newcommand{\gxfour}[5]  {\raisebox{-11.33pt}{\gclass{#1}{4}
                            {\gxrow{3}{#2}\gxrow{2}{#3}\gxrow{1}{#4}\gxrow{0}{#5}}}}

\newcommand{\plotperm}[2]  
{
  \foreach \y [count=\x] in {#2} \fill[radius=0.275] (\x,\y) circle;
  \draw[thick] (.5,.5) rectangle (#1.5,#1.5);
}

\newcommand{\plotpermgrid}[2]  
{
  \foreach \x in {1,...,#1} \draw[very thin] (\x,.5)--(\x,#1.5) (.5,\x)--(#1.5,\x);
  \foreach \y [count=\x] in {#2} \fill[radius=0.275] (\x,\y) circle;
  \draw[thick] (.5,.5) rectangle (#1.5,#1.5);
}

\title{
Growth rates of permutation grid classes, \\
tours on graphs, and the spectral radius
}

\author{David Bevan\\
\footnotesize Department of Mathematics and Statistics\\[-5pt]
\footnotesize The Open University\\[-5pt]
\footnotesize Milton Keynes, England\\[-5pt]
\footnotesize \texttt{David.Bevan@open.ac.uk}}

\date{}

\begin{document}
\maketitle

\let\thefootnote\relax\footnotetext
{2010 Mathematics Subject Classification: 05A05, 
05A16, 05C50. 
}

\begin{abstract}
\noindent
Monotone grid classes of permutations have proven very effective in helping to determine structural and enumerative properties of classical permutation pattern classes.
Associated with grid class $\Grid(M)$ is
a graph, $G(M)$,
known as its ``row-column''
graph.
We
prove that the
exponential
growth rate of $\Grid(M)$ is equal to the square of the spectral radius of $G(M)$.
Consequently,
we
utilize
spectral graph
theoretic results
to
characterise all slowly growing grid classes
and
to
show
that for every
$\gamma\geqslant2+\sqrt{5}$
there is a grid class with growth rate arbitrarily close to $\gamma$.
To prove our main result, we 
establish bounds on the size of certain families of tours on graphs.
In the process,
we
prove
that
the family of
tours of even length on
a connected graph
grows at the same rate as
the family of
``balanced''
tours on
the graph
(in which
the number of times an edge is traversed in one direction is the same as the number of times it is traversed in the other direction).
\end{abstract}

\section{Introduction}
We consider a permutation to be simply an arrangement of the numbers $1,2,\ldots,k$ for some positive $k$.
We use $|\sigma|$ to denote the length of permutation $\sigma$.
A permutation $\tau$ is said to be \textbf{contained} in, or to be a \textbf{subpermutation} of, another permutation $\sigma$ if $\sigma$ has a subsequence whose terms have the same relative ordering as
$\tau$.
It can be
helpful to consider permutations graphically, and
from the graphical perspective,
$\sigma$ contains $\tau$ if the plot of $\tau$ results from erasing some points from the plot of $\sigma$ and then ``shrinking'' the axes appropriately.
If $\sigma$ does not contains $\tau$, we say that $\sigma$ \textbf{avoids} $\tau$.
For example, 31567482 contains 1324 (see Figure~\ref{figPermutation}) but avoids 1243.
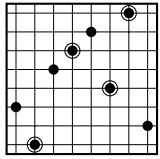
\begin{figure}[ht]
  $$
  \begin{tikzpicture}[scale=0.25]
    \plotpermgrid{8}{3,1,5,6,7,4,8,2}
    \draw [thin] (2,1) circle [radius=0.4];
    \draw [thin] (4,6) circle [radius=0.4];
    \draw [thin] (6,4) circle [radius=0.4];
    \draw [thin] (7,8) circle [radius=0.4];
  \end{tikzpicture}
  $$
  \caption{The plot of permutation 31567482, with a 1324 subpermutation marked}\label{figPermutation}
\end{figure}

Clearly, the containment relation is a partial order on the set of all permutations. A classical \textbf{permutation class} (or ``pattern class'') is a set of permutations closed downwards (a \emph{down-set}) in this partial order. From a graphical perspective, this means that erasing points from
the plot of
a permutation in a permutation class $\CCC$ always results in
the plot of
another permutation in $\CCC$
when the axes are rescaled appropriately.

Given a permutation class $\CCC$, we denote by $\CCC_k=\{\sigma\in\CCC:|\sigma|=k\}$ the set of permutations in $\CCC$ of length $k$.
The (ordinary) generating function of $\CCC$ is thus $\sum_{k\in\mathbb{N}}|\CCC_k|z^k=\sum_{\sigma\in\CCC}z^{|\sigma|}$.
It is common to define a permutation class $\CCC$ ``negatively'' by stating the minimal set of permutations $B$ that do \emph{not} occur in the class. In this case, we write $\CCC=\av(B)$ (where Av signifies ``avoids''). $B$ is called the \textbf{basis} of $\CCC$.
The basis of a permutation class is an \emph{antichain} (a set of pairwise incomparable elements)
and may be infinite.

\begin{figure}[ht]
  $$
  \begin{tikzpicture}[scale=0.20]
    \plotperm{8}{3,1,5,6,7,4,8,2}
      \draw[thick] (5.5,.5) -- (5.5,8.5);
      \draw[thick] (.5,2.5) -- (8.5,2.5);
  \end{tikzpicture}
  \quad\;
  \begin{tikzpicture}[scale=0.20]
    \plotperm{8}{3,1,5,6,7,4,8,2}
      \draw[thick] (5.5,.5) -- (5.5,8.5);
      \draw[thick] (.5,3.5) -- (8.5,3.5);
  \end{tikzpicture}
  \quad\;
  \begin{tikzpicture}[scale=0.20]
    \plotperm{8}{3,1,5,6,7,4,8,2}
      \draw[thick] (5.5,.5) -- (5.5,8.5);
      \draw[thick] (.5,4.5) -- (8.5,4.5);
  \end{tikzpicture}
  \quad\;
  \begin{tikzpicture}[scale=0.20]
    \plotperm{8}{3,1,5,6,7,4,8,2}
      \draw[thick] (4.5,.5) -- (4.5,8.5);
      \draw[thick] (.5,4.5) -- (8.5,4.5);
  \end{tikzpicture}
  \quad\;
  \begin{tikzpicture}[scale=0.20]
    \plotperm{8}{3,1,5,6,7,4,8,2}
      \draw[thick] (3.5,.5) -- (3.5,8.5);
      \draw[thick] (.5,4.5) -- (8.5,4.5);
  \end{tikzpicture}
  \quad\;
  \begin{tikzpicture}[scale=0.20]
    \plotperm{8}{3,1,5,6,7,4,8,2}
      \draw[thick] (2.5,.5) -- (2.5,8.5);
      \draw[thick] (.5,4.5) -- (8.5,4.5);
  \end{tikzpicture}
  \quad\;
  \begin{tikzpicture}[scale=0.20]
    \plotperm{8}{3,1,5,6,7,4,8,2}
      \draw[thick] (2.5,.5) -- (2.5,8.5);
      \draw[thick] (.5,5.5) -- (8.5,5.5);
  \end{tikzpicture}
  \vspace{-6pt}
  $$
  \caption{The seven griddings of permutation 31567482 in
  \protect\gctwo{2}{1,1}{-1,-1}
  }\label{figGriddings}
\end{figure}
The monotone \textbf{grid class} $\Grid(M)$
is a permutation class defined by a matrix $M$, all of whose entries are in $\{0,1,-1\}$, which specifies the acceptable ``shape'' for plots of permutations in the class. Each entry of $M$ corresponds to a \textbf{cell} in a ``gridding'' of a permutation.
If the
entry is $1$, any points in the cell must form an increasing sequence; if the entry is $-1$, any points in the cell must form a decreasing sequence; if the entry is $0$, the cell must be empty.
For greater clarity, we denote grid classes by \textbf{cell diagrams} rather than by their matrices; for example, $\gctwo{3}{1,-1,0}{0,-1,1}=\begin{gridmx}1&-1&0\\0&-1&1\end{gridmx}$.
A permutation may have multiple possible griddings in a grid class (see Figure~\ref{figGriddings} for an example).

Recent years have seen much progress on understanding enumerative and structural properties of permutation classes.
The use of grid classes has proven particularly fruitful.
One focus of research has been the enumeration of permutation classes that have small bases (see~\cite{WikiEnumPermClasses}).
In this context
the first use of grid classes
(but not using that term)
was by Atkinson~\cite{Atkinson1999}, who
determined that
$$
\av(132,4321)
\;=\;
\gcfive{5}
  {0,0,0,0,1}
  {0,0,1,0,0}
  {1,0,0,0,0}
  {0,1,0,0,0}
  {0,0,0,1,0}
\cup
\gcfive{5}
  {0,0,0,0,1}
  {1,0,0,0,0}
  {0,0,0,1,0}
  {0,1,0,0,0}
  {0,0,1,0,0}
$$
and used the fact to enumerate this class of permutations.
More recently,
Albert, Atkinson and Brignall~\cite{AAB2011,AAB2012}
and
Albert, Atkinson and Vatter~\cite{AAV2012}
have demonstrated the practical uses of grid classes for permutation class enumeration
by determining the generating functions of
seven permutation classes whose bases consist of two permutations of
length four.

Another primary area of exploration has concerned the \emph{growth rates} of permutation classes.
Marcus and Tardos~\cite{MT2004} proved the conjecture of Stanley and Wilf that for any permutation class $\CCC$ except the class of all permutations there exists a constant $c$ such that $|\CCC_k|\leqslant c^k$ for all $k$. Thus, every permutation class with non-empty basis has finite lower and upper exponential growth rates defined, respectively, by
$$
\grlow(\CCC)\;=\;\liminf_{k\rightarrow\infty}|\CCC_k|^{1/k}
\quad
\text{and}
\quad
\grup(\CCC)\;=\;\limsup_{k\rightarrow\infty}|\CCC_k|^{1/k}.
$$
If the lower and upper growth rates coincide, then $\CCC$ has a \textbf{growth rate}, which we denote $\gr(\CCC)$.
(It is widely conjectured that \emph{every} permutation class has a growth rate.)
In~\cite{Vatter2011}, Vatter investigated the possible values of permutation class growth rates, and used generalised grid classes to characterize all the (countably many) permutation classes with growth rates below $\kappa\approx2.20557$. He also established that there are uncountably many permutation classes with growth rate $\kappa$, and in a separate paper~\cite{Vatter2010b}, showed that there are permutation classes having every growth rate above $\lambda\approx2.48188$.
(The behaviour between
$\kappa$ and $\lambda$ is the subject of ongoing research.)

Grid classes have also been a subject of investigation themselves. The first to be studied was
the class of \emph{skew-merged} permutations $\!\gctwo{2}{-1,1}{1,-1}\!$. Stankova~\cite{Stankova1994} and
K\'{e}dzy, Snevily and Wang~\cite{KSW1996} proved that this class is $\av(2143,3412)$, and Atkinson~\cite{Atkinson1998} determined its generating function. More recently,
Waton, in his PhD thesis~\cite{WatonThesis}, enumerated $\!\gctwo{2}{1,1}{1,0}\!$.
In addition to these enumerations,
some structural results have also been established.
Atkinson~\cite{Atkinson1999}
proved that
grid classes whose matrices have dimension $1\times m$
have a finite basis.
Waton~\cite{WatonThesis} proved the same for
$\!\gctwo{2}{1,1}{1,1}\!$,
a result
which has been extended by
Albert, Atkinson and Brignall~\cite{AAB2010}
to all $2\times 2$
grid classes.
(It is generally believed that \emph{all} grid classes have a finite basis, but this has not yet been proven; see~\cite{HV2006} Conjecture 2.3.)

Associated with each grid class is a bipartite graph known as its ``row-column'' graph,
which encapsulates certain 
structural information about the class.
(We present its definition later in Section~\ref{sectGridClasses}.)
Particularly of note,
Murphy and Vatter~\cite{MV2003}
have shown that
a grid class is \emph{partially well-ordered} (contains no infinite antichains) if and only if its row-column graph has no cycles.
Moreover, Albert, Atkinson, Bouvel, Ru\v{s}kuc and Vatter~\cite{AABRV2011} proved a result that implies that
if a grid class has an acyclic row-column graph
then the generating function of the class is a rational function (the ratio of two polynomials).

Our focus in this paper is on the
growth rates
of grid classes.
We prove the following theorem:
\begin{repthm}{thmGrowthRate}
The growth rate of a monotone grid class of permutations exists and is equal to the square of the spectral radius of its row-column graph.
\end{repthm}

The bulk of the work required to prove this theorem 
is concerned with carefully counting certain families of tours on graphs, in order to give bounds on their sizes.
In particular, we consider ``balanced'' tours, in which the number of times an edge is
traversed in one direction is the same as the number of times it is traversed in the other
direction.
As a consequence, we 
prove the following new result concerning tours on graphs:
\begin{repthm}{thmBalancedEqualsEven}
The growth rate of the family of balanced tours on a connected graph is the same as that of the family of all tours of even length on the graph.
\end{repthm}

As a consequence of Theorem~\ref{thmGrowthRate}, 
by using the machinery of spectral graph theory, we are able to deduce
a variety of supplementary results.
We give a characterisation of grid classes whose growth rates are no greater than
$\frac{9}{2}$ 
(in a similar fashion to Vatter's characterisation of ``small'' permutation classes in~\cite{Vatter2011}).
We also fully characterise all \emph{accumulation points} of grid class growth rates, the least of which occurs at 4.
Other results include:
\begin{repcor}{corAlgebraicInteger}
The growth rate of every monotone grid class is an algebraic integer.
\end{repcor}
\begin{repcor}{corCycle}
A monotone grid class whose row-column graph is a cycle has growth rate~4.
\end{repcor}
\begin{repcor}{corSmallGrowthRates}
If the growth rate of a monotone grid class is less than 4, it is equal to $4\cos^2\!\left(\frac{\pi}{k}\right)$ for some $k\geqslant 3$.
\end{repcor}
\begin{repcor}{corAccumulationPoints}
For every $\gamma \geqslant 2+\sqrt{5}$ there is a monotone grid class with growth rate arbitrarily close to $\gamma$.
\end{repcor}


The remainder of this paper is structured as follows:
In Section~\ref{sectTours},
we introduce the particular families of tours on graphs that we study and
present our results concerning these tours, culminating in the proof of Theorem~\ref{thmBalancedEqualsEven}.
This is followed, in Section~\ref{sectGridClasses}, by the application of these results to prove our grid class growth rate result, Theorem~\ref{thmGrowthRate}.
Section~\ref{sectConsequences}, contains a number of consequences of Theorem~\ref{thmGrowthRate} that follow from known spectral graph theoretic results.
We conclude with a few final remarks.

\section{Tours on graphs}\label{sectTours}
In this section, we investigate families of tours on graphs, parameterised by the number of times each edge is traversed.
We determine a lower bound on the size of families of ``balanced'' tours
and an upper bound on families of arbitrary tours.
Applying the upper bound to tours of even length gives us an expression compatible with the lower bound. 
Combining this with the fact that any balanced tour has even length enables us to
prove Theorem~\ref{thmBalancedEqualsEven}
which reveals that even-length tours and balanced tours grow at the same rate.
These bounds are subsequently used in Section~\ref{sectGridClasses} to relate
tours on graphs to permutation grid classes.

To establish the lower and upper bounds, we first enumerate tours on trees. We then present a way of associating tours on an arbitrary connected graph $G$ with tours on a
related 
``partial covering'' tree,
which we employ to determine bounds for families of tours on arbitrary graphs.
Let us begin by introducing the tours that we will be considering.

\subsection{Notation and definitions}
A \emph{walk}, of length $k$, on a graph
is a non-empty alternating sequence of
vertices and edges $v_0, e_1, v_1, e_2, v_2, \ldots, e_k, v_k$ in which the endvertices of $e_i$ are $v_{i-1}$ and $v_i$.
Neither the edges nor the vertices need be distinct.
We say that such a walk \textbf{traverses} edges $\{e_1,\ldots,e_k\}$ and \textbf{visits} vertices $\{v_1,\ldots,v_{k-1}\}$.
A \emph{tour} (or \emph{closed} walk) is a walk which starts and ends at the same vertex (i.e. $v_k=v_0$).
Our interest is restricted to tours.

In what follows, when considering a graph with $m$ edges, we denote its edges $e_1,e_2,\ldots,e_m$. In any particular context, we can choose
the
ordering of the edges
so as
to simplify our presentation.
We denote the
edges incident to a given vertex $v$
by
${e^v_1},{e^v_2},\ldots,e^v_{\dg(v)}$, where $\dg(v)$ is the \emph{degree} of $v$ (number of edges incident to $v$).
Again, we are free to choose
the order of the edges incident to a vertex
so as
to clarify our arguments.

\subsubsection*{Families of tours}
Our interest is in families of tours that are parameterised by the number of times each edge is traversed.
Given non-negative integers $h_1,h_2,\ldots,h_m$
and some vertex $u$ of a graph $G$, we use
$$\WWW_G((h_i);u)\;=\;\WWW_G(h_1,h_2,\ldots,h_m;u)$$ to denote the family of tours on $G$ which start and end at $u$ and traverse each edge $e_i$ exactly $h_i$ times. (We use $\WWW$ rather than $\TTT$ for families of tours to avoid confusion when considering tours on trees.) 

We use ${h^v_1},{h^v_2},\ldots,h^v_{\dg(v)}$ for the number of traversals of edges incident to a vertex $v$ in $\WWW_G((h_i);u)$.
So, if $v$ and $w$ are the endvertices of $e_i$, $h_i$ has two aliases $h^v_j$ and $h^w_{j'}$ for some $j$ and $j'$.

We use $W_G((h_i);u)=|\WWW_G((h_i);u)|$ to denote the number of these tours.

Note that for some values of $h_1,\ldots,h_m$, the family $\WWW_G((h_i);u)$ is empty.
In particular, if $E^+=\{e_i\in E(G):h_i>0\}$ is the set of edges visited by tours in the family,
and $G^+=G[E^+]$ is the subgraph of $G$ induced by these edges,
then
if $G^+$ is disconnected or does not contain $u$,
we have $\WWW_G((h_i);u)=\varnothing$.
A family of tours may also be empty for ``parity'' reasons; for example, if $T$ is a tree, then $\WWW_T((h_i);u)=\varnothing$ if any of the $h_i$ are odd. Our counting arguments must remain valid for these empty families.

Of particular interest to us are tours in which
the number of times an edge is traversed in one direction is the same as
the number of times it is traversed in the other direction.
We call such tours {\textbf{balanced}}.

Given non-negative integers $k_1,k_2,\ldots,k_m$
and some vertex $u$ of a graph $G$, we use
$$\WWWB_G((k_i);u)\;=\;\WWWB_G(k_1,k_2,\ldots,k_m;u)$$ to denote the family of balanced tours on $G$ which start and end at $u$, and traverse each edge $e_i$ exactly $k_i$ times \emph{in each direction}.
Note that we parameterise balanced tours by \emph{half} the number of traversals of each edge.

We use ${k^v_1},{k^v_2},\ldots,k^v_{\dg(v)}$ for the number of traversals in either direction of edges incident to a vertex $v$ in $\WWWB_G((k_i);u)$.
So, if $v$ and $w$ are the endvertices of $e_i$, $k_i$ has two aliases $k^v_j$ and $k^w_{j'}$ for some $j$ and $j'$

We use $\WB_G((k_i);u)=|\WWWB_G((k_i);u)|$ to denote the number of these balanced tours.

As with $\WWW_G((h_i);u)$, $\WWWB_G((k_i);u)$ may be empty.
Observe also that, since any tour on a forest is balanced, $\WWW_F((2k_i);u)=\WWWB_F((k_i);u)$ for any forest $F$ and $u\in V(F)$.
Moreover, for any graph $G$,
we have
$\WB_G((k_i);u)\leqslant W_G((2k_i);u)$, with equality if and only if the component of $G^+$ containing $u$, if present, is acyclic, where $G^+$ is the subgraph of $G$ induced by the edges that are actually traversed by tours
in the family.

\subsubsection*{Visits and excursions}
We use $\vis{G}{v}$ to denote the number of \textbf{visits} to $v$ of any tour on $G$ in some
family (specified by the context).
In practice, this notation is unambiguous because we only consider one family of tours on a particular graph at a time.
Observe that any tour in $\WWW_G((h_i);u)$ visits vertex $v\neq u$ exactly $\half({h^v_1}+{h^v_2}+\ldots+{h^v_{\dg(v)}})$ times, and
that for balanced tours in $\WWWB_G((k_i);u)$ we have
$\vis{G}{v}={k^v_1}+{k^v_2}+\ldots+{k^v_{\dg(v)}}$.

If $\vis{G}{v}$ is positive, then
separating the visits to $v$ are $\vis{G}{v}-1$ ``subtours''
starting and ending at $v$;
we refer to these subtours as \textbf{excursions} from $v$.

\subsubsection*{Multinomial coefficients}
In our calculations, we make considerable use of \emph{multinomial coefficients},
with their combinatorial interpretation,
for which we
use the standard notation
$$
\qquad\qquad\qquad\qquad
\binom{n}{k_1,k_2,\ldots,k_r}
\;=\; \frac{n!}{k_1!\+k_2!\+\ldots \+k_r!},
\qquad\quad
\text{where~}
\sum_{i=1}^r k_i =n,
$$
to denote
the number of ways of distributing $n$ distinguishable objects between $r$ (distinguishable) bins,
such that bin $i$ contains exactly $k_i$ objects ($1\leqslant i\leqslant r$).

We make repeated use of the fact that a multinomial coefficient can be decomposed into a product of binomial coefficients as follows:
$
\binom{n}{k_1,\ldots,k_r}
=
\binom{k_1}{k_1} \binom{k_1+k_2}{k_2} \ldots \binom{k_1+\ldots +k_r}{k_r}
.
$
We consider a multinomial coefficient that has one or more negative terms to be \emph{zero}.
This guarantees
that the monotonicity condition
$
\binom{n}{k_1,\ldots,k_r}
\leqslant
\binom{n+1}{k_1+1,\ldots,k_r}
$
holds
for
all possible sets of values.

\subsection{Tours on trees}\label{sectTrees}
We begin by establishing bounds on the size of families of tours on \emph{trees}.
As we noted above, all such tours are balanced.
We start with star graphs, giving an exact enumeration of any family:
\begin{lemma}\label{lemmaStar}
If $S_m$ is the star graph $K_{1,m}$ with central vertex $u$, then
$$
\WB_{S_m}((k_i);u)\;=\;
\binom{k_1+k_2+\ldots+k_m}{k_1,\,k_2,\,\ldots,\,k_m}
\;=\;
\binom{\vis{S_m}{u}}{k^u_1,\,k^u_2,\,\ldots,\,k^u_{\dg(u)}}.
$$
\end{lemma}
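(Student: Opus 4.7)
The plan is to observe that the extreme shape of a star graph forces every tour to have an essentially rigid decomposition, so the count reduces to a bare multinomial/sequence-counting question.

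First I would note that in $S_m$, every non-central vertex is a leaf (degree one). Consequently, whenever a tour based at $u$ steps out along an edge $e_i$ to a leaf, the only available next edge is $e_i$ itself, so the tour must immediately return to $u$. Hence any tour in $\WWWB_{S_m}((k_i);u)$ decomposes uniquely into a concatenation of elementary ``out-and-back'' excursions, each of which traverses a single edge $e_i$ once in the outward direction and once in the return direction, contributing exactly one to $k_i$ in each direction.

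Next I would count these decompositions. If $N=k_1+k_2+\ldots+k_m$ is the total number of such excursions required, then the tour is determined by, and determines, a word of length $N$ over the alphabet $\{e_1,\ldots,e_m\}$ in which letter $e_i$ appears exactly $k_i$ times (the $j$-th letter being the edge used on the $j$-th excursion). The number of such words is the multinomial coefficient $\binom{N}{k_1,\ldots,k_m}$, which gives the first stated equality. The second equality is purely notational: by the definition of $\vis{G}{v}$ for balanced tours in the excerpt, $\vis{S_m}{u}=k^u_1+\ldots+k^u_{\dg(u)}=k_1+\ldots+k_m=N$, and $k^u_i=k_i$ since $u$ is incident to every edge.

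I do not anticipate a genuine obstacle here; the only thing to be a bit careful about is to confirm the formula in degenerate cases. If some $k_i$ vanish, the corresponding edges are simply absent from the decomposition and the multinomial coefficient correctly ignores them; if all $k_i=0$, the sole ``tour'' is the trivial length-zero tour at $u$, matching $\binom{0}{0,\ldots,0}=1$. The bijection between tours and words is immediate in both directions (read off the sequence of outward edges; conversely, any such word encodes a legal tour), so no further argument is required.
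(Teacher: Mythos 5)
Your proof is correct and takes essentially the same approach as the paper: the paper's one-line argument is exactly your observation that every tour decomposes uniquely into out-and-back excursions along the spokes, whose interleavings are counted by the multinomial coefficient. You simply spell out the bijection to words and check degenerate cases, which the paper leaves implicit.
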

\begin{proof}
$\WWWB_{S_m}((k_i);u)$ consists of all possible interleavings of $k_i$ excursions from $u$ out-and-back along each $e_i$.
\end{proof}
It is possible to extend our exact enumeration to those families of balanced tours on trees in which every internal (non-leaf) vertex is visited at least once. These families are never empty.
\begin{lemma}\label{lemmaTree}
If $T$ is a tree,
$u\in V(T)$
and, for each $v\neq u$, $e^v_1$ is the edge incident to $v$ that is on the unique path between $u$ and $v$, and
if $k^v_1$ is positive for all internal vertices $v$ of $T$,
then
$$
\WB_T((k_i);u)\;=\;
\binom{\vis{T}{u}}{k^u_1,\,k^u_2,\,\ldots,\,k^u_{\dg(u)}}
\prod_{v\neq u}\binom{\vis{T}{v}-1}{{k^v_1}\!-\!1,\,{k^v_2},\,\ldots,\,{k^v_{\dg(v)}}}.
$$
\end{lemma}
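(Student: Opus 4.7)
The plan is to proceed by induction on $|V(T)|$. Lemma~\ref{lemmaStar} provides the base case: if $T$ is a star centred at $u$, every non-root vertex is a leaf with factor $\binom{k^v_1-1}{k^v_1-1}=1$, so the formula reduces to the star enumeration. For the induction step, assume $T$ is not a star centred at $u$, and pick an internal vertex $v\neq u$ of maximum distance from $u$ in $T$; then all neighbours of $v$ other than the one on the unique $u$--$v$ path (namely $e^v_1$) are leaves. Write $\ell_2,\ldots,\ell_{\dg(v)}$ for these leaf neighbours and let $T'=T-\{\ell_2,\ldots,\ell_{\dg(v)}\}$; in $T'$, $v$ has become a leaf adjacent only via $e^v_1$, so the internal-vertex hypothesis is preserved on $T'$.

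The key step is to set up a bijection
\[
\WWWB_T((k_i);u)\;\longleftrightarrow\;\WWWB_{T'}((k'_i);u)\;\times\;\III_v,
\]
where $(k'_i)$ denotes the restriction of the parameters to edges of $T'$ and $\III_v$ encodes the \textbf{leaf activity} at $v$. Given any tour in $\WWWB_T((k_i);u)$, its sequence of visits to $v$ splits into $k^v_1$ consecutive \emph{phases}, each beginning with an arrival via $e^v_1$ and ending with the next departure via $e^v_1$; this is well-defined because every $u$--$v$ path in the tree passes through $e^v_1$, forcing the first arrival at and last departure from $v$ both to use $e^v_1$. Within a phase, every departure along some $e^v_j$ ($j\geq 2$) is immediately followed by a return along the same $e^v_j$, since $\ell_j$ is a leaf, so each phase yields an ordered list of \textbf{leaf excursions} labelled by $j\in\{2,\ldots,\dg(v)\}$. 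Collapsing each phase to a single visit gives a balanced tour on $T'$, and conversely any $T'$-tour together with an ordered list of leaf excursions at each of its $k^v_1$ visits to $v$ reassembles uniquely into a $T$-tour.

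To count $\III_v$, encode the phase-excursion data as a single sequence consisting of $k^v_1-1$ phase separators together with $k^v_j$ excursion markers of each type $j\geq 2$; this sequence has total length $\vis{T}{v}-1$, giving $|\III_v|=\binom{\vis{T}{v}-1}{k^v_1-1,\,k^v_2,\,\ldots,\,k^v_{\dg(v)}}$. Since $\vis{T'}{w}=\vis{T}{w}$ for $w\neq v$, the factor corresponding to $v$ in the inductive formula for $T'$ is $\binom{k^v_1-1}{k^v_1-1}=1$, and the factors for the removed leaves also collapse to $1$, so multiplying the inductive $T'$-count by $|\III_v|$ recovers precisely the claimed product.

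The main obstacle is the careful verification of the bijection: checking that the phase decomposition is well-defined (balance forces every non-$e^v_1$ departure from $v$ to be matched immediately by a return along the same leaf edge, and every $u$--$v$ path in a tree is unique) and invertible (any $T'$-tour with arbitrary phase-excursion data yields a legal balanced tour on $T$). Once this bijection is secured, the remaining count reduces to the displayed multinomial identity.
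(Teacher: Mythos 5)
Your proof is correct and takes essentially the same approach as the paper, differing only in direction: you run the induction by pruning the leaf neighbours of a deepest internal vertex $v$, whereas the paper grows a larger tree by attaching new pendant edges to a leaf $v$. The key combinatorial identity — counting interleavings of leaf excursions with the $k^v_1-1$ excursions (your phase separators) along $e^v_1$ — is the same in both.
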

\begin{proof}
We use induction on the number of internal vertices. Note that the multinomial coefficient for a leaf vertex simply contributes a factor of 1 to the product.
Lemma~\ref{lemmaStar} provides the base case.

Given a tree $T$ with $m$ edges $e_1,\ldots,e_m$,
and a leaf $v$ of $T$,
let $T'$ be the tree ``grown'' from $T$ by attaching $r$ new pendant edges $e_{m+1},\ldots,e_{m+r}$ to $v$.

If ${k^v_1}$ is positive,
since $v$ is a leaf, each tour in $\WWWB_T(k_1,\ldots,k_m;u)$ visits $v$ exactly ${k^v_1}$ times, with ${k^v_1}-1$ excursions from $v$ along ${e^v_1}$ separating these visits.
Any such tour can be extended to a tour in $\WWWB_{T'}(k_1,\ldots,k_{m+r};u)$
by arbitrarily interleaving $k_{m+i}$ new excursions out-and-back along each new pendant edge $e_{m+i}$
($i=1,\ldots,r$)
with the existing ${k^v_1}-1$ excursions from $v$ along ${e^v_1}$.
\end{proof}
This exact enumeration can be used to generate 
the following general bounds on the number of tours on trees:
\begin{cor}\label{corTreeBounds}
If $T$ is a tree, then for any vertex
$u\in V(T)$, $\WB_T((k_i);u)$ satisfies the following bounds:
$$
\prod_{v\in V(T)}\binom{\vis{T}{v}-\dg(v)}{{k^v_1}\!-\!1,\,{k^v_2}\!-\!1,\,\ldots,\,{k^v_{\dg(v)}}\!-\!1}
\;\leqslant\;
\WB_T((k_i);u)
\;\leqslant\;
\prod_{v\in V(T)}\binom{\vis{T}{v}}{{k^v_1},\,{k^v_2},\,\ldots,\,{k^v_{\dg(v)}}}.
$$
\end{cor}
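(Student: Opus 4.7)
The plan is to deduce both bounds from Lemma~\ref{lemmaTree} by applying the monotonicity inequality $\binom{n}{k_1,\ldots,k_r}\leqslant\binom{n+1}{k_1+1,\ldots,k_r}$ termwise, after first reducing to the situation that Lemma~\ref{lemmaTree} literally covers. If any $k_i = 0$, then at each endpoint of $e_i$ one of the values $k^v_j$ equals zero, so the lower-bound multinomial at that vertex has a bin equal to $-1$ and vanishes by the stated convention; the lower bound is therefore trivially $0$ in this case. For the upper bound, $\WWWB_T((k_i);u)=\varnothing$ gives the inequality immediately; otherwise the tours in the family live entirely on the subtree $T^+$ spanned by $u$ together with the edges having $k_i>0$. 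Every internal vertex $v$ of $T^+$ satisfies $k^v_1>0$, because the unique $u$-to-$v$ path in $T^+$ coincides with the one in $T$ and its final edge $e^v_1$ must be traversed in order for $v$ to be visited. Vertices of $T$ outside $T^+$ contribute factors $\binom{0}{0,\ldots,0}=1$ to the upper-bound product, so it suffices to prove the upper bound on $T^+$, where Lemma~\ref{lemmaTree} applies directly.

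After this reduction, Lemma~\ref{lemmaTree} supplies the exact value of $\WB_T((k_i);u)$, and both bounds follow from per-vertex inequalities between multinomial coefficients. The lower bound reduces to verifying
$$
\binom{\vis{T}{v}-\dg(v)}{k^v_1-1,\,k^v_2-1,\,\ldots,\,k^v_{\dg(v)}-1}
\;\leqslant\;
\binom{\vis{T}{v}-1}{k^v_1-1,\,k^v_2,\,\ldots,\,k^v_{\dg(v)}}
$$
for each $v\neq u$, with the right-hand side replaced by $\binom{\vis{T}{u}}{k^u_1,\ldots,k^u_{\dg(u)}}$ when $v=u$; the upper bound reduces to verifying
$$
\binom{\vis{T}{v}-1}{k^v_1-1,\,k^v_2,\,\ldots,\,k^v_{\dg(v)}}
\;\leqslant\;
\binom{\vis{T}{v}}{k^v_1,\,k^v_2,\,\ldots,\,k^v_{\dg(v)}}.
$$
Each of these is obtained by iterating the monotonicity one bin at a time: $\dg(v)-1$ applications for the lower-bound step at $v\neq u$, $\dg(u)$ applications at $v=u$, and a single application for the upper-bound step. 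Multiplying these per-vertex inequalities across $V(T)$ yields the stated bounds.

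The argument is essentially algebraic once Lemma~\ref{lemmaTree} is in hand, so the only real subtlety lies in the bookkeeping around families in which some edges are untraversed. The two devices above — observing that the lower bound vanishes automatically whenever any $k_i = 0$, and passing to $T^+$ when proving the upper bound — are what allow the proof to remain a short chain of applications of the multinomial monotonicity inequality.
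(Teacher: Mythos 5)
Your proof is correct and follows essentially the same route as the paper: deduce both bounds from Lemma~\ref{lemmaTree} via the multinomial monotonicity inequality, handle the lower bound in the degenerate case by noting it vanishes when some $k_i=0$, and handle the upper bound in that case by passing to the visited subtree $T^+$. You spell out the per-vertex monotonicity bookkeeping more explicitly than the paper does (which merely asserts the positive-$k_i$ case "follows directly"), but the structure is identical.
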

\begin{proof}
If all the $k_i$ are positive, then this follows directly from Lemma~\ref{lemmaTree}.

If one or more of the $k_i$ is zero, then the lower bound is trivially true, because one of the multinomial coefficients is zero. The upper bound
also holds trivially
if there are no tours in the family. Otherwise,
let $T^+$ be the subtree of $T$ induced by the vertices actually visited by tours in $\WWWB_T((k_i);u)$. Then $\WB_T((k_i);u)=\WB_{T^+}((k_i);u)$. But we know that
$$
\WB_{T^+}((k_i);u)
\;\leqslant\;
\prod_{v\in V(T^+)}\binom{\vis{T^+}{v}}{{k^v_1},\,{k^v_2},\,\ldots,\,{k^v_{\dg(v)}}}
\;=\;
\prod_{v\in V(T)}\binom{\vis{T}{v}}{{k^v_1},\,{k^v_2},\,\ldots,\,{k^v_{\dg(v)}}}
$$
as a result of Lemma~\ref{lemmaTree} and the fact that $k^v_i=0$ for all
edges $e^v_i$ incident to
unvisited vertices
$v\in V(T)\setminus V(T^+)$.
\end{proof}

\subsection{Treeification} 
In order to establish the lower and upper bounds for tours on arbitrary connected graphs, we relate tours on a connected graph $G$ to (balanced) tours on a related
tree which we call a \textbf{treeification} of $G$.
The process of treeification
consists of repeatedly breaking cycles until
the resulting graph is acyclic.
This creates a sequence of graphs $G=G_0,G_1,\ldots,G_t=T$ 
where
$T$ is a tree.
We call this sequence a \textbf{treeification sequence}.

Formally, we define a treeification of a
connected
graph to be the result of 
the following
(nondeterministic)
process that transforms a connected graph
into a
tree
with
the same number of
edges. 

\newpage 
\begin{samepage}
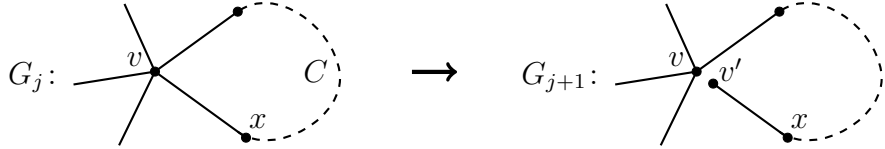
\begin{figure}[ht]
  $$
    \raisebox{.4in}{$G_j\!:\;\:$}
    \begin{tikzpicture}[scale=1.36,rotate=-36]
      \draw [thick] (1.1,0)--(0,0);
      \draw [thick] (0,0) -- (.31,.95);
      \draw [thick] (-.63,.36)--(0,0)--(-.57,-.57);
      \draw [thick] (0,0)--(.14,-.79);
      \draw [dashed,thick,] (1,0) to [out=0,in=-45] (1.46,1.06) node[left]{$C$} to [out=135,in=72] (.31,.95);
      \draw [fill] (0,0) circle [radius=0.044];
      \draw [fill] (1.1,0) circle [radius=0.044];
      \draw [fill] (.31,.95) circle [radius=0.044];
      \node at (-.2,0) {$v\:$};
      \node at (1.1,0.2) {$x$};
    \end{tikzpicture}
    \qquad
    \raisebox{.45in}
    {\begin{tikzpicture}\draw [->,ultra thick](0,0)--(.6,0);\end{tikzpicture}}
    \qquad
    \raisebox{.4in}{$G_{j+1}\!:\;\:$}
    \begin{tikzpicture}[scale=1.36,rotate=-36]
      \draw [thick] (1.1,0)--(.2,0);
      \draw [thick] (0,0) -- (.31,.95);
      \draw [thick] (-.63,.36)--(0,0)--(-.57,-.57);
      \draw [thick] (0,0)--(.14,-.79);
      \draw [dashed,thick,] (1,0) to [out=0,in=-45] (1.46,1.06) to [out=135,in=72] (.31,.95);
      \draw [fill] (0,0) circle [radius=0.044];
      \draw [fill] (.2,0) circle [radius=0.044];
      \draw [fill] (1.1,0) circle [radius=0.044];
      \draw [fill] (.31,.95) circle [radius=0.044];
      \node at (-.2,0) {$v\:$};
      \node at (.26,.21) {$v'$};
      \node at (1.1,0.2) {$x$};
    \end{tikzpicture}
    \vspace{-3pt}
  $$
  \caption{Splitting vertex $v$}\label{figVertexSplitting}
\end{figure}
To treeify a connected graph $G=G_0$,
we first give an (arbitrary) order to
its vertices. Then we 
apply the following
vertex-splitting operation in turn
to each $G_j$
to create $G_{j+1}$ ($j=0,1,\ldots$), 
until no cycles remain:
\vspace{-9pt}
\begin{enumerate}
  \itemsep0pt
  \item
  Let $v$ be the first vertex (in the ordering)
  that occurs in some cycle $C$ of $G_j$.
  \item Split vertex $v$ by doing the following (see Figure~\ref{figVertexSplitting}): \vspace{-3pt}
  \begin{enumerate}
  \itemsep0pt
  \item Delete an edge $xv$ from $E(C)$ (there are two choices for vertex $x$).
  \item Add a new vertex $v'$ (to the end of the vertex ordering).
  \item Add the pendant edge $xv'$ (making $v'$ a leaf).
  \end{enumerate}
\end{enumerate}
\end{samepage}

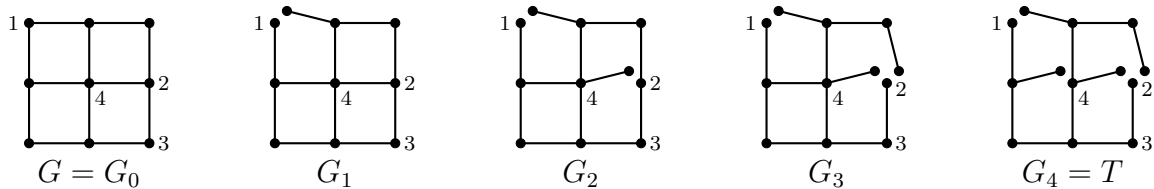
\begin{figure}[ht]
$$
  \begin{tikzpicture}[scale=0.8]
      \draw [thick] (0,2)--(0,0)--(2,0)--(2,1);
      \draw [thick] (1,0)--(1,2)--(2,2);
      \draw [thick] (1,2)--(0,2);
      \draw [thick] (1,1)--(2,1);
      \draw [thick] (2,2)--(2,1);
      \draw [thick] (0,1)--(1,1);
      \draw [fill] (0,0) circle [radius=0.075];
      \draw [fill] (0,1) circle [radius=0.075];
      \draw [fill] (0,2) circle [radius=0.075];
      \draw [fill] (1,0) circle [radius=0.075];
      \draw [fill] (1,1) circle [radius=0.075];
      \draw [fill] (1,2) circle [radius=0.075];
      \draw [fill] (2,0) circle [radius=0.075];
      \draw [fill] (2,1) circle [radius=0.075];
      \draw [fill] (2,2) circle [radius=0.075];
      \node at (1,-.5) {$G=G_0$};
      \node at (-.25,2) {${}_1$};
      \node at (2.25,1) {${}_2$};
      \node at (2.25,0) {${}_3$};
      \node at (1.2,.75) {${}_4$};
    \end{tikzpicture}
    \qquad
    \begin{tikzpicture}[scale=0.8]
      \draw [thick] (0,2)--(0,0)--(2,0)--(2,1);
      \draw [thick] (1,0)--(1,2)--(2,2);
      \draw [thick] (1,2)--(0.2,2.2);
      \draw [thick] (1,1)--(2,1);
      \draw [thick] (2,2)--(2,1);
      \draw [thick] (0,1)--(1,1);
      \draw [fill] (0,0) circle [radius=0.075];
      \draw [fill] (0,1) circle [radius=0.075];
      \draw [fill] (0,2) circle [radius=0.075];
      \draw [fill] (0.2,2.2) circle [radius=0.075];
      \draw [fill] (1,0) circle [radius=0.075];
      \draw [fill] (1,1) circle [radius=0.075];
      \draw [fill] (1,2) circle [radius=0.075];
      \draw [fill] (2,0) circle [radius=0.075];
      \draw [fill] (2,1) circle [radius=0.075];
      \draw [fill] (2,2) circle [radius=0.075];
      \node at (1,-.5) {$G_1$};
      \node at (-.25,2) {${}_1$};
      \node at (2.25,1) {${}_2$};
      \node at (2.25,0) {${}_3$};
      \node at (1.2,.75) {${}_4$};
    \end{tikzpicture}
    \qquad
    \begin{tikzpicture}[scale=0.8]
      \draw [thick] (0,2)--(0,0)--(2,0)--(2,1);
      \draw [thick] (1,0)--(1,2)--(2,2);
      \draw [thick] (1,2)--(0.2,2.2);
      \draw [thick] (1,1)--(1.8,1.2);
      \draw [thick] (2,2)--(2,1);
      \draw [thick] (0,1)--(1,1);
      \draw [fill] (0,0) circle [radius=0.075];
      \draw [fill] (0,1) circle [radius=0.075];
      \draw [fill] (0,2) circle [radius=0.075];
      \draw [fill] (0.2,2.2) circle [radius=0.075];
      \draw [fill] (1,0) circle [radius=0.075];
      \draw [fill] (1,1) circle [radius=0.075];
      \draw [fill] (1,2) circle [radius=0.075];
      \draw [fill] (2,0) circle [radius=0.075];
      \draw [fill] (2,1) circle [radius=0.075];
      \draw [fill] (1.8,1.2) circle [radius=0.075];
      \draw [fill] (2,2) circle [radius=0.075];
      \node at (1,-.5) {$G_2$};
      \node at (-.25,2) {${}_1$};
      \node at (2.25,1) {${}_2$};
      \node at (2.25,0) {${}_3$};
      \node at (1.2,.75) {${}_4$};
    \end{tikzpicture}
    \qquad
    \begin{tikzpicture}[scale=0.8]
      \draw [thick] (0,2)--(0,0)--(2,0)--(2,1);
      \draw [thick] (1,0)--(1,2)--(2,2);
      \draw [thick] (1,2)--(0.2,2.2);
      \draw [thick] (1,1)--(1.8,1.2);
      \draw [thick] (2,2)--(2.2,1.2);
      \draw [thick] (0,1)--(1,1);
      \draw [fill] (0,0) circle [radius=0.075];
      \draw [fill] (0,1) circle [radius=0.075];
      \draw [fill] (0,2) circle [radius=0.075];
      \draw [fill] (0.2,2.2) circle [radius=0.075];
      \draw [fill] (1,0) circle [radius=0.075];
      \draw [fill] (1,1) circle [radius=0.075];
      \draw [fill] (1,2) circle [radius=0.075];
      \draw [fill] (2,0) circle [radius=0.075];
      \draw [fill] (2,1) circle [radius=0.075];
      \draw [fill] (1.8,1.2) circle [radius=0.075];
      \draw [fill] (2.2,1.2) circle [radius=0.075];
      \draw [fill] (2,2) circle [radius=0.075];
      \node at (1,-.5) {$G_3$};
      \node at (-.25,2) {${}_1$};
      \node at (2.25,.9) {${}_2$};
      \node at (2.25,0) {${}_3$};
      \node at (1.2,.75) {${}_4$};
    \end{tikzpicture}
    \qquad
    \begin{tikzpicture}[scale=0.8]
      \draw [thick] (0,2)--(0,0)--(2,0)--(2,1);
      \draw [thick] (1,0)--(1,2)--(2,2);
      \draw [thick] (1,2)--(0.2,2.2);
      \draw [thick] (1,1)--(1.8,1.2);
      \draw [thick] (2,2)--(2.2,1.2);
      \draw [thick] (0,1)--(0.8,1.2);
      \draw [fill] (0,0) circle [radius=0.075];
      \draw [fill] (0,1) circle [radius=0.075];
      \draw [fill] (0,2) circle [radius=0.075];
      \draw [fill] (0.2,2.2) circle [radius=0.075];
      \draw [fill] (1,0) circle [radius=0.075];
      \draw [fill] (1,1) circle [radius=0.075];
      \draw [fill] (0.8,1.2) circle [radius=0.075];
      \draw [fill] (1,2) circle [radius=0.075];
      \draw [fill] (2,0) circle [radius=0.075];
      \draw [fill] (2,1) circle [radius=0.075];
      \draw [fill] (1.8,1.2) circle [radius=0.075];
      \draw [fill] (2.2,1.2) circle [radius=0.075];
      \draw [fill] (2,2) circle [radius=0.075];
      \node at (1,-.5) {$G_4=T$};
      \node at (-.25,2) {${}_1$};
      \node at (2.25,.9) {${}_2$};
      \node at (2.25,0) {${}_3$};
      \node at (1.2,.75) {${}_4$};
    \end{tikzpicture}
$$
\caption{A treeification sequence; numbers show the first few vertices in the ordering}\label{figTreeify}
\end{figure}

Note that if a vertex $v$ is split
multiple
times when treeifying a graph $G$, these
splits occur contiguously (because of the ordering placed on the vertices of $G$).
Thus, if $v$ is split
$r$ times, there is a contiguous subsequence $G_j,G_{j+1},\ldots,G_{j+r}$
of the treeification sequence
that corresponds to the splitting of $v$.
See Figure~\ref{figTreeify} for an example
of a treeification sequence.

There is a natural way to establish a relationship between tours on different graphs in a treeification sequence $G_0,...,G_t$.
The treeification process induces \emph{graph homomorphisms} (edge preserving maps) between the graphs in such a sequence.
For all $i<j$, there is a \emph{surjective} 
homomorphism from $G_j$ onto $G_i$.
This homomorphism is also \emph{locally injective} since it maps neighbourhoods of $G_j$ injectively into neighbourhoods of $G_i$.
A locally injective map such as this is also known as a \emph{partial cover}.
In particular, for each $j<t$, there is a
partial cover of $G_{j+1}$ onto $G_j$ that maps
the new pendant edge $xv'$ to the edge $xv$ that it replaces.
These homomorphisms
impart
a natural correspondence between families of tours on different graphs in the treeification sequence, which we will
employ
later
to determine our bounds.


Although the concept of treeification is a very natural one,
these particular ``partial covering trees'' do not appear to have been studied before;
their
only previous use seems to be by
Yarkony, Fowlkes and Ihler
to address a problem in
computer vision \cite{YFI2010}.
For a general introduction to graph homomorphisms, see
see the monograph by Hell and Ne\v{s}e\-t\v{r}il~\cite{HN2004}.
For more on partial maps and other locally constrained graph homomorphisms,
see the survey article by Fiala and Kratochv\'{\i}l~\cite{FK2008}.

If we have a treeification sequence
$G=G_0,\ldots,G_t=T$ for a connected graph $G$,
we can use a three-stage process to establish a lower or upper bound for a family of tours on $G$.
In the first stage (``splitting once''), we relate the number of tours in the family on $G_j$ ($j<t$) to the number of tours in a related family on $G_{j+1}$.
In the second stage (``fully splitting one vertex''), for a vertex $v$, we consider the subsequence $G_j,\ldots,G_{j+r}$
that corresponds to the splitting of $v$
and, iterating
the inequality from
the first stage, relate the number of tours on $G_j$ to the number of tours on $G_{j+r}$.
Finally (``fully splitting all vertices''), iterating
the inequality from
the second stage, we relate the number of tours on $G=G_0$ to the number of tours on $G_t=T$, and employ
the bounds on tours on $T$ from Corollary~\ref{corTreeBounds} to determine the bound for the family of tours on $G$.

In Subsection~\ref{sectLowerBound}, we use this three-stage process to produce a lower bound on
$\WB_G((k_i);u)$.
Then, in Subsection~\ref{sectUpperBound}, we use the same three-stage process to establish an upper bound on
$W_G((h_i);u)$.

\subsection{The lower bound}\label{sectLowerBound}
Our lower bound is
on the number of \emph{balanced} tours. We only
consider the families in which \emph{every} edge is traversed at least once in each direction. On a connected graph, these families are never empty.
\begin{lemma}
\label{lemmaLowerBound}
If $G$ is a connected graph with $m$ edges and $k_1,\ldots,k_m$ are all positive, then for any vertex $u\in V(G)$,
$$
\WB_G(k_1,k_2,\ldots,k_m;u)
\;\geqslant\;
\prod_{v\in V(G)}\binom
{{k^v_1}+{k^v_2}+\ldots+{k^v_{\dg(v)}}-\dg(v)}
{{k^v_1}\!-\!1,\,{k^v_2}\!-\!1,\,\ldots,\,{k^v_{\dg(v)}}\!-\!1}.
$$
\end{lemma}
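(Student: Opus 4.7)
My plan is to follow the three-stage strategy outlined in the previous subsection, using the treeification sequence $G = G_0, G_1, \ldots, G_t = T$ to transfer Corollary~\ref{corTreeBounds}'s tree bound back to $G$, collecting a multinomial correction factor at each split so that the final product, originally indexed by the vertices of $T$, becomes indexed by the vertices of $G$ and takes the claimed form.

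For \emph{Stage~1 (splitting once)}, suppose $G_{j+1}$ is obtained from $G_j$ by splitting $v$: we delete a cycle edge $xv$ with parameter $k_m$ and attach a pendant edge $xv'$. The goal is to show
$$
\WB_{G_j}((k_i);u) \;\geq\; \binom{\vis{G_j}{v} - \dg_{G_j}(v)}{k_m - 1} \cdot \WB_{G_{j+1}}((k_i);u).
$$
The key construction is to start from a balanced tour on $G_{j+1}$, project it to a tour on $G_j$ (in which all $k_m$ traversals of $xv$ sit as paired back-and-forths at $x$), and then enlarge this single projected tour to a family of distinct balanced tours on $G_j$ by choosing how to distribute the ``extra'' $k_m - 1$ paired traversals among the $\vis{G_j}{v} - \dg_{G_j}(v)$ available excursion slots at~$v$; injectivity of the overall association produces the stated multinomial factor.

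For \emph{Stage~2 (fully splitting one vertex)}, consider the subsequence $G_j, G_{j+1}, \ldots, G_{j+r}$ in which $v$ is split $r$ times, producing copies $v = v_1, v_2, \ldots, v_{r+1}$ in $T$, with $v_2, \ldots, v_{r+1}$ leaves. Iterating Stage~1 across these $r$ splits produces a product of multinomial factors that I plan to telescope using the identity
$$
\binom{N}{p_1, \ldots, p_s}\binom{N + q}{q} \;=\; \binom{N+q}{p_1, \ldots, p_s, q}.
$$
Combined with the $v_1$-contribution to the Corollary~\ref{corTreeBounds} bound for $T$ (and using that each leaf copy $v_i$ for $i \geq 2$ contributes the trivial factor $\binom{k-1}{k-1} = 1$), the telescoping collapses exactly to $\binom{\vis{G}{v} - \dg_G(v)}{k^v_1 - 1, \ldots, k^v_{\dg_G(v)} - 1}$, the desired $v$-contribution to the lower bound on $\WB_G$.

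For \emph{Stage~3 (fully splitting all vertices)}, I multiply these per-vertex contributions over all $v \in V(G)$ and invoke Corollary~\ref{corTreeBounds} to bound $\WB_T$ from below, giving the stated lower bound. The main obstacle lies in Stage~1: one must carefully describe the combinatorial construction that associates to each balanced tour on $G_{j+1}$ a family of $\binom{\vis{G_j}{v} - \dg_{G_j}(v)}{k_m - 1}$ distinct balanced tours on $G_j$ with matching parameters, and verify that the overall map is injective across the whole family. Once Stage~1 is in hand, the telescoping in Stage~2 and the final assembly in Stage~3 are essentially algebraic.
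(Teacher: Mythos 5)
Your high-level plan --- treeify $G$, then work through three stages and telescope the per-split factors against Corollary~\ref{corTreeBounds} --- is the same strategy the paper uses, and your Stage~2 algebra does check out: with the per-split factor $\binom{\vis{G_j}{v}-\dg_{G_j}(v)}{k_m-1}$ the product telescopes \emph{exactly} to $\binom{\vis{G}{v}-\dg_G(v)}{k^v_1-1,\ldots,k^v_{\dg_G(v)}-1}$, whereas the paper establishes the stronger per-split factor $\binom{\vis{G_j}{v}-2}{k_m-1}$ and then discards the extra strength in Stages~2 and~3 via the monotonicity inequality for multinomials. In that sense your telescoping is the tidier of the two.

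However, there is a genuine gap in Stage~1, which you acknowledge yourself. The construction you sketch --- distribute the ``extra'' $k_m-1$ paired traversals among $\vis{G_j}{v}-\dg_{G_j}(v)$ ``available excursion slots'' at $v$ --- is not substantiated: there is no natural sense in which the number of slots is $\vis{G_j}{v}-\dg_{G_j}(v)$ (separating the $\vis{G_j}{v}$ visits to $v$ are $\vis{G_j}{v}-1$ excursions, and the degree of $v$ does not enter that count), and it is not clear that moving whole paired back-and-forths around produces an injective family of the claimed size. What the paper actually does in Stage~1 is an interleaving argument: among the $\vis{G_j}{v}-1$ excursions from $v$, either $k_m-1$ or $k_m$ begin with a traversal of the split edge; these can be interleaved with the remaining excursions in at least $\min\!\bigl[\binom{\vis{G_j}{v}-1}{k_m-1},\binom{\vis{G_j}{v}-1}{k_m}\bigr]\geqslant\binom{\vis{G_j}{v}-2}{k_m-1}$ ways, and exactly one interleaving corresponds to a tour that lifts to $G_{j+1}$. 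Since the vertex being split lies on a cycle, $\dg_{G_j}(v)\geqslant 2$, so this stronger bound implies your claimed Stage~1 factor, and your Stages~2--3 then go through as you describe. So the plan is sound, but Stage~1 needs the interleaving argument rather than the slot-distribution idea as currently stated.
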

This lower bound does not hold in general for a \emph{disconnected} graph since there are no tours possible if
there is any positive $k_i$
in a component not containing $u$.

\begin{proof}
Let $T$ be some treeification of $G$ with treeification sequence $G=G_0,\ldots,G_t=T$ in which
vertex $u$ is never split.
(This is possible by positioning $u$ last in the ordering on the vertices.)

By exhibiting a surjection from $\WWWB_G((k_i);u)$ onto $\WWWB_T((k_i);u)$ that is consistent with the homomorphism from $T$ onto $G$ induced by the treeification process, we
determine an inequality relating
the number of tours
in the two families.

\subsubsection*{I. Splitting once}
Our first stage is to associate a number of tours on $G_j$, in $\WWWB_{G_j}((k_i);u)$, to each tour on $G_{j+1}$, in $\WWWB_{G_{j+1}}((k_i);u)$.

To simplify the notation, let $\Hzero=G_j$ and $H=G_{j+1}$ for some $j<t$.
Let $v$ be the vertex of $\Hzero$ that is split in $H$,
and let $v'$ be the leaf vertex in $H$ added when splitting $v$.
Let $e_1$ be the (only) edge incident to $v'$ in $H$; we also use $e_1$ to refer to the corresponding edge (incident to $v$) in $\Hzero$ (see Figure~\ref{figSplittingLB}).

\begin{figure}[ht]
  $$
    \raisebox{.4in}{$H=G_{j+1}\!:\;\:$}
    \begin{tikzpicture}[scale=1.36,rotate=-36]
      \draw [thick] (1.1,0)--(.2,0) node[midway,below]{$e_1$};
      \draw [thick] (1,.2)--(.55,.2) node[very near end,right]{$\;k_1$};
      \draw [thick] (.55,.2) to [out=180,in=90](.5,.15) to [out=-90,in=180](.55,.1);
      \draw [<-,thick] (1,.1)--(.55,.1);
      \draw [thick] (0,0) -- (.31,.95);
      \draw [thick] (-.63,.36)--(0,0)--(-.57,-.57);
      \draw [thick] (0,0)--(.14,-.79);
      \draw [dashed,thick,] (1,0) to [out=0,in=-45] (1.46,1.06) to [out=135,in=72] (.31,.95);
      \draw [fill] (0,0) circle [radius=0.044];
      \draw [fill] (.2,0) circle [radius=0.044];
      \draw [fill] (1.1,0) circle [radius=0.044];
      \draw [fill] (.31,.95) circle [radius=0.044];
      \node at (-.2,0) {$v\:$};
      \node at (.26,.21) {$v'$};
    \end{tikzpicture}
    \qquad
    \raisebox{.45in}
    {\begin{tikzpicture}\draw [->,ultra thick](0,0)--(.6,0);\end{tikzpicture}}
    \qquad
    \raisebox{.4in}{$\Hzero=G_j\!:\;\:$}
    \begin{tikzpicture}[scale=1.36,rotate=-36]
      \draw [thick] (1.1,0)--(0,0) node[midway,below]{$e_1$};
      \draw [->,thick] (.95,.1)--(.45,.1);
      \draw [<-,thick] (.95,.2)--(.45,.2) node[very near end,right]{$\;k_1$};
      \draw [thick] (0,0) -- (.31,.95);
      \draw [thick] (-.63,.36)--(0,0)--(-.57,-.57);
      \draw [thick] (0,0)--(.14,-.79);
      \draw [dashed,thick,] (1,0) to [out=0,in=-45] (1.46,1.06)
        to [out=135,in=72] (.31,.95);
      \draw [fill] (0,0) circle [radius=0.044];
      \draw [fill] (1.1,0) circle [radius=0.044];
      \draw [fill] (.31,.95) circle [radius=0.044];
      \node at (-.2,0) {$v\:$};
    \end{tikzpicture}
    \vspace{-3pt}
  $$
  \caption{Tours on $\Hzero$ corresponding to a tour on $H$}\label{figSplittingLB}
\end{figure}
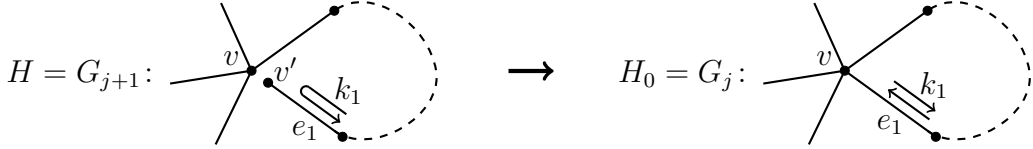
Any tour in $\WWWB_H((k_i);u)$ visits vertex $v$ exactly $\vis{H}{v}$ times and visits vertex $v'$ (along $e_1$)
$k_1$
times. The corresponding tour on $\Hzero$ visits $v$ exactly $\vis{\Hzero}{v}=\vis{H}{v}+k_1$ times. Of these visits there are $k_1$ which arrive along $e_1$ and then depart along $e_1$.

Since $\vis{\Hzero}{v}$ is positive,
separating the visits are $\vis{\Hzero}{v}-1$ excursions from $v$. Depending on whether the final visit to $v$ departs along $e_1$ or not, either $k_1-1$ or $k_1$ of these excursions begin with a traversal of $e_1$; these are interleaved with the other $\vis{H}{v}$ or $\vis{H}{v}-1$ excursions which begin with a traversal of some other edge.

Changing the interleaving of these two sets of excursions (without altering their internal ordering) produces at least
$$
\min\left[\binom{\vis{\Hzero}{v}-1}{k_1-1},\binom{\vis{\Hzero}{v}-1}{k_1}\right]
\;\geqslant\;
\binom{\vis{\Hzero}{v}-2}{k_1-1}
$$
distinct tours in $\WWWB_{\Hzero}((k_i);u)$.

Note that there is only one interleaving of the sets of excursions that corresponds to a valid tour in $\WWWB_H((k_i);u)$: the one in which the excursions beginning with a traversal of $e_1$ away from $v$ are arranged so they occur immediately following a traversal of $e_1$ towards~$v$.

Hence we can deduce that
\begin{equation}\label{eqUnsplitOnce}
\WB_{\Hzero}((k_i);u)
\;\geqslant \;
\binom{\vis{\Hzero}{v}-2}{k_1-1}\WB_H((k_i);u).
\end{equation}

\subsubsection*{II. Fully splitting one vertex}
For a given vertex $v$, let
$H_0,H_1,\ldots,H_r$
be the subsequence of graphs that corresponds to the splitting of $v$.
In our second stage, we
relate the number of tours on $H_0$ to the number of tours on $H_r$.

Note that
$\vis{H_0}{v}=\vis{G}{v}$
and
$\vis{H_r}{v}=\vis{T}{v}$
since the splitting of other vertices cannot affect the number of visits to $v$.

Let $e_1,\ldots,e_r$ be the new pendant edges in $H_r$, and hence also in $T$, added when $v$ is split, and let $e_1,\ldots,e_r$ also denote the corresponding edges in $G$.
Then
$\vis{H_{i-1}}{v}=\vis{H_i}{v}+k_i$
for $1\leqslant i\leqslant r$,
and thus
$\vis{H_{i-1}}{v}
=\vis{T}{v}+k_i+k_{i+1}+\ldots+k_r$,
and in particular
$\vis{G}{v}
=\vis{T}{v}+k_1
+\ldots+k_r$.

Hence, by iterating inequality~\eqref{eqUnsplitOnce}, 
\begin{align}\label{eqUnsplitVertex}
\WB_{H_0}((k_i);u)
\;\geqslant\;\; &
\prod_{i=1}^r\binom{\vis{H_{i-1}}{v}-2}{k_i-1}
\+\WB_{H_r}((k_i);u) \nonumber\\[10pt]
=\;\; &
\prod_{i=1}^r\binom{\vis{T}{v}+\big(\sum_{j=i}^r k_j\big)-2}{k_i-1}
\+\WB_{H_r}((k_i);u) \nonumber\\[10pt]
\geqslant\;\; &
\prod_{i=1}^r\binom{\vis{T}{v}+\big(\sum_{j=i}^r (k_j-1)\big)-1}{k_i-1}
\+\WB_{H_r}((k_i);u) \nonumber\\[10pt]
=\;\; &
\binom{\vis{G}{v}-(r+1)}{\vis{T}{v}-1,\,k_1-1,\,k_2-1,\,\ldots,\,k_r-1}
\+\WB_{H_r}((k_i);u).
\end{align}

\subsubsection*{III. Fully splitting all vertices}
Finally, our third stage is to relate the number of tours on $G$ to the number
of tours on $T$ and then apply the tree bounds to establish the required lower bound.

For each $v\in V(G)$, let $r(v)$ be the number of times $v$ is split.
Note that $r(v)$ is less than the degree of $v$ in $G$ since $\dg_G(v)=\dg_T(v)+r(v)$.

Thus,
with a suitable indexing of the edges around each vertex,
if we
iterate inequality~\eqref{eqUnsplitVertex}
and combine with the lower bound on $\WB_{T}((k_i);u)$ from Corollary~\ref{corTreeBounds}, we get
\begin{align*}
\WB_G((k_i);u)
\;\;\geqslant\;\; &
\prod_{v\in V(G)}
\binom{\vis{G}{v}-(r(v)+1)}{\vis{T}{v}-1,\,{k^v_1}-1,\,\ldots,\,{k^v_{r(v)}}-1}
\+\WB_{T}((k_i);u) \\[10pt]
\geqslant\;\; &
\prod_{v\in V(G)}
\binom{\vis{G}{v}-(r(v)+1)}{\vis{T}{v}-1,\,{k^v_1}-1,\,\ldots,\,{k^v_{r(v)}}-1}
\binom{\vis{T}{v}-\dg_T(v)}{{k^v_{r(v)+1}}\!-\!1,\,\ldots,\,{k^v_{\dg_G(v)}}\!-\!1} \\[10pt]
\;\;\geqslant\;\; &
\prod_{v\in V(G)}
\binom{\vis{G}{v}-\dg_G(v)}{{k^v_1}-1,\,{k^v_2}-1,\,\ldots,\,{k^v_{\dg_G(v)}}-1}
\end{align*}
concluding the proof of Lemma~\ref{lemmaLowerBound}.
\end{proof}

\subsection{The upper bound}\label{sectUpperBound}
Our upper bound applies to arbitrary families of tours $\WWW_G((h_i);u)$, without any restriction on the values of the $h_i$.
Subsequently, we will apply this result to families of tours of even length.
\begin{lemma}
\label{lemmaUpperBound}
If $G$ is a connected graph with $m$ edges and $u$ is any vertex of $G$,
then
$$
W_G(h_1,h_2,\ldots,h_m;u)
\; \leqslant \;
(h+2m)^m
\prod\limits_{v\in V(G)}\binom
{{k^v_1}+{k^v_2}+\ldots+{k^v_{\dg(v)}}}
{{k^v_1},\,{k^v_2},\,\ldots,\,{k^v_{\dg(v)}}}
$$
for some
$k_i\in[\half h_i, \half h_i + m]$ $(1\leqslant i\leqslant m)$,
where $h=h_1+\ldots+h_m$ is the length of the tours in the family
and
$k^v_1,k^v_2,\ldots,k^v_{\dg(v)}$ are the $k_i$
corresponding to
edges incident to $v$.
\end{lemma}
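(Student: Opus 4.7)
The plan is to mirror the three-stage treeification argument of Lemma~\ref{lemmaLowerBound}, but to establish the inequalities in the reverse direction so as to produce an upper bound. Fix a treeification sequence $G = G_0, G_1, \ldots, G_t = T$ in which the base vertex $u$ is never split (achievable by placing $u$ last in the vertex ordering).

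In Stage~I (a single splitting step $H_0 = G_j \to H = G_{j+1}$, where vertex $v$ is split and the edge $e_1 = xv$ is replaced by the new pendant edge $xv'$), I aim to show
$$W_{H_0}((h_i);u) \;\leqslant\; (h+2m) \cdot W_H(h_2,\ldots,h_m,2k_1;u)$$
for some $k_1 \in [h_1/2,\, h_1/2 + 1]$. Each tour $\omega \in \WWW_{H_0}((h_i);u)$ has some number $a(\omega)$ of \emph{bounces} at $v$ via $e_1$ (visits arriving and departing via $e_1$) together with some \emph{through-passes} (visits where $e_1$ appears on exactly one side). The projection $\pi:H \to H_0$ places $2^{a(\omega)}$ tours on $H$ above each $\omega$ that has no through-passes, since each bounce can be relabeled as a visit to $v$ or to $v'$; the fibre is empty when $\omega$ has through-passes. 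To account for the latter, I would perform a surgery pairing each ``arrive-$e_1$'' through-pass with a subsequent ``leave-$e_1$'' through-pass at $v$, rerouting the intervening subtour so as to convert the pair into a single bounce. Residual through-passes (arising from non-zero net flow of $e_1$ at $v$) can be absorbed by a single padding excursion. Summing over the $\lfloor h_1/2 \rfloor + 1$ admissible values of $k_1$ and replacing the sum by its maximum gives the Stage~I inequality, with the factor $(h+2m)$ absorbing the sum.

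In Stage~II, for each vertex $v$ with $r(v)$ splits I iterate Stage~I across the contiguous subsequence $H_0, H_1, \ldots, H_{r(v)}$ corresponding to those splits, obtaining $W_{H_0}((h_i);u) \leqslant (h+2m)^{r(v)} W_{H_{r(v)}}((h'_i);u)$. In Stage~III, iterating across all vertices and using $\sum_v r(v) = m - n + 1 \leqslant m$ produces
$$W_G((h_i);u) \;\leqslant\; (h+2m)^m \cdot \WB_T((k_i);u)$$
for some $k_i \in [h_i/2,\, h_i/2 + m]$. Applying the tree upper bound from Corollary~\ref{corTreeBounds} and then using the product decomposition of multinomial coefficients, together with the facts that every leaf of $T$ contributes a unit factor and that $\vis{G}{v} \geqslant \vis{T}{v}$ with $\dg_G(v) \geqslant \dg_T(v)$ for each $v \in V(G)$, yields
$$\prod_{v \in V(T)}\binom{\vis{T}{v}}{k^v_1,\ldots,k^v_{\dg_T(v)}} \;\leqslant\; \prod_{v \in V(G)}\binom{k^v_1 + \cdots + k^v_{\dg_G(v)}}{k^v_1,\,\ldots,\,k^v_{\dg_G(v)}},$$
which delivers the stated bound.

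The main obstacle is the Stage~I surgery argument. The projection $\pi:H\to H_0$ is genuinely non-surjective onto $\WWW_{H_0}((h_i);u)$, and bounding the contribution of tours containing through-passes --- especially those with non-zero net flow of $e_1$ at $v$, which produces an imbalance between the ``arrive-$e_1$'' and ``leave-$e_1$'' through-pass counts --- requires careful case analysis. The polynomial slack $(h+2m)$ per split step is what absorbs both the surgery adjustments and the sum over admissible values of $k_1$.
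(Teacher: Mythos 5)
The plan correctly identifies the three-stage treeification structure, but the Stage~I inequality is false, and this is a fatal gap, not a detail to be worked out. You assert $W_{H_0}((h_i);u) \leqslant (h+2m)\cdot W_H(h_2,\ldots,h_m,2k_1;u)$, i.e.\ that a single splitting step can be accounted for by a \emph{polynomial} multiplicity factor. That cannot be true. Any surjection from tours on $H$ onto tours on $H_0$ (or any map from tours on $H_0$ into tours on $H$) must deal with the fact that in $H_0$ the excursions from $v$ that begin with a traversal of $e_1$ may be interleaved \emph{arbitrarily} with the remaining excursions, whereas in $H$ each visit to $v'$ is forced to be an immediate out-and-back. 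The ``surgery'' you describe therefore has to reorder the $e_1$-initial excursions relative to the others, and the information about the original interleaving is lost; recovering it introduces a multiplicity of the form $\binom{\vis{H}{v}+k_1}{k_1}$, which is exponential in $h$ whenever both $\vis{H}{v}$ and $k_1$ grow linearly. This binomial factor is exactly what the paper's proof tracks at each split; when iterated, those factors combine with the tree multinomials to produce the $\prod_{v\in V(G)}\binom{\sum_i k^v_i}{k^v_1,\ldots,k^v_{\dg_G(v)}}$ in the statement. Your final inequality $\prod_{v\in V(T)}\binom{\vis{T}{v}}{\cdots}\leqslant\prod_{v\in V(G)}\binom{\cdots}{\cdots}$ is true, but its truth should have been a warning sign: it means your argument, if correct, would give a strictly \emph{stronger} upper bound, namely $W_G((h_i);u)\leqslant(h+2m)^m\+\WB_T((k_i);u)$.

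That stronger bound is demonstrably false. Take $G=K_4$ and $h_i=2k$ for all six edges. By Lemma~\ref{lemmaLowerBound} with $k_i=k$,
$$W_G((2k);u)\;\geqslant\;\WB_G((k);u)\;\geqslant\;\prod_{v\in V(G)}\binom{3k-3}{k-1,k-1,k-1},$$
which up to polynomial factors is $3^{12k}$. Any treeification $T$ of $K_4$ is a tree with $7$ vertices, degree sequence summing to $12$, and since vertex splitting strictly reduces $\sum_v \dg(v)\log \dg(v)$, the corresponding $T$-multinomial product is exponentially smaller than $3^{12k}$. Hence $W_G/\WB_T$ is exponential in $k$, and no polynomial factor $(h+2m)^m$ can bridge that gap. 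The paper's Stage~I inequality carries the binomial coefficient precisely to close this gap, and its Stage~II/III bookkeeping is how those coefficients are absorbed into the $G$-multinomials rather than discarded.

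As a smaller point, the claim that the covering projection places $2^{a(\omega)}$ tours of $H$ above a tour of $H_0$ with no through-passes is also incorrect: in $H$ the edge $e_1$ joins $x$ to the leaf $v'$ only, so a traversal of $e_1$ is forced to visit $v'$, and the preimage of a tour of $H_0$ without through-passes is unique, not of size $2^{a(\omega)}$. This does not break your argument (you use the fibres only to motivate the surgery), but it reflects a misreading of the treeification construction.
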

\begin{proof}
Let $T$ be some treeification of $G$ with treeification sequence $G=G_0,\ldots,G_t=T$ in which
vertex $u$ is never split.
(This is possible by positioning $u$ last in the ordering on
the vertices.)

We relate the number of (arbitrary) tours in $\WWW_G((h_i);u)$ to the number of (balanced) tours in $\WWWB_T((k_i);u)$, for some $k_i$ not much greater than $\half h_i$.
This is achieved by exhibiting a surjection from $\WWWB_T((k_i);u)$ onto $\WWW_G((h_i);u)$
that is consistent with the
homomorphism from $T$ onto $G$ induced by the treeification process.

The proof is broken down into the same three stages as for the proof of the lower bound.
Initially, we restrict ourselves to the case in which all the $\vis{G}{v}$ are positive.
The case of unvisited vertices is addressed in an additional stage at the end. 

\subsubsection*{I. Splitting once}
Our first stage is to associate to each tour on $G_j$ a number of tours on $G_{j+1}$.
However, unlike in the proof of the lower bound, the relationship is not between classes with the same parameterisation.
Rather, we relate tours in $\WWW_{G_j}((h_i);u)$
to slightly longer tours in
$\WWW_{G_{j+1}}((h'_i);u)$, for some $h'_i$ such that,
for each $i$, $h_i \leqslant h'_i \leqslant h_i + 2$.

As we did for the lower bound, let $\Hzero=G_j$ and $H=G_{j+1}$ for some $j<t$.
Let $v$ be the vertex of $\Hzero$ that is split in $H$,
and let $v'$ be the leaf vertex in $H$ added when splitting $v$.

Again, let $e_1$ be the (only) edge incident to $v'$ in $H$; we also use $e_1$ to refer to the corresponding edge (incident to $v$) in $\Hzero$.

Let
$C$ be some cycle in $\Hzero$
containing $e_1$, and let
$e_2$ be the other edge on $C$ that is incident to $v$ (in both $\Hzero$ and $H$).

\begin{figure}[ht]
  $$
    \raisebox{.4in}{$\Hzero=G_j\!:\;\:$}
    \begin{tikzpicture}[scale=1.36,rotate=-36]
      \draw [thick] (1.1,0)--(0,0) node[midway,below]{$e_1$};
      \draw [<->,thick] (.95,.1)--(.45,.1) node[very near end,right]{$\;h_1$};
      \draw [thick] (0,0) -- (.31,.95) node[midway,above]{$e_2\;$};
      \draw [thick] (-.63,.36)--(0,0)--(-.57,-.57);
      \draw [thick] (0,0)--(.14,-.79);
      \draw [dashed,thick,] (1,0) to [out=0,in=-45] (1.46,1.06) node[left]{$C$} to [out=135,in=72] (.31,.95);
      \draw [fill] (0,0) circle [radius=0.044];
      \draw [fill] (1.1,0) circle [radius=0.044];
      \draw [fill] (.31,.95) circle [radius=0.044];
      \node at (-.2,0) {$v\:$};
    \end{tikzpicture}
    \qquad
    \raisebox{.45in}
    {\begin{tikzpicture}\draw [->,ultra thick](0,0)--(.6,0);\end{tikzpicture}}
    \qquad
    \raisebox{.4in}{$H=G_{j+1}\!:\;\:$}
    \begin{tikzpicture}[scale=1.36,rotate=-36]
      \draw [thick] (1.1,0)--(.2,0) node[midway,below]{$e_1$};
      \draw [thick] (1,.2)--(.55,.2) node[very near end,right]{$\;k_1$};
      \draw [thick] (.55,.2) to [out=180,in=90](.5,.15) to [out=-90,in=180](.55,.1);
      \draw [<-,thick] (1,.1)--(.55,.1);
      \draw [thick] (0,0) -- (.31,.95) node[midway,above]{$e_2\;$};
      \draw [thick] (-.63,.36)--(0,0)--(-.57,-.57);
      \draw [thick] (0,0)--(.14,-.79);
      \draw [dashed,thick,] (1,0) to [out=0,in=-45] (1.46,1.06) to [out=135,in=72] (.31,.95);
      \draw [fill] (0,0) circle [radius=0.044];
      \draw [fill] (.2,0) circle [radius=0.044];
      \draw [fill] (1.1,0) circle [radius=0.044];
      \draw [fill] (.31,.95) circle [radius=0.044];
      \node at (-.2,0) {$v\:$};
      \node at (.26,.21) {$v'$};
    \end{tikzpicture}
    \vspace{-3pt}
  $$
  \caption{Tours on $H$ corresponding to a tour on $\Hzero$; $k_1=\floor{\half h_1}+1$}\label{figSplittingUB}
  \end{figure}

Given a tour on $\Hzero$, we want to modify it so that the result is a valid tour on $H$. For a tour on $\Hzero$ to be valid on $H$, each traversal of $e_1$ towards $v$ must be immediately followed by a traversal of $e_1$ from $v$. See Figure~\ref{figSplittingUB}.

\begin{samepage}
To achieve this, we make three kinds of changes to excursions from $v$: \vspace{-12pt}
\begin{enumerate}
  \itemsep0pt
  \item Reverse the direction of some of the excursions.
  \item Add one or two additional excursions (around $C$).
  \item Modify the interleaving of excursions.
\end{enumerate}
\vspace{-6pt}
\end{samepage}

\begin{samepage}
To manage the details, given a tour on $\Hzero$, we consider the
$\vis{\Hzero}{v}-1$
excursions from $v$ to be partitioned into subsets as follows:
\begin{center}
\begin{tabular}{rl}
  \textbf{\xx{}}: & $a_0$ excursions that don't traverse $e_1$ at all \\
  \textbf{\ox{}}: & $a_1$ excursions that begin but don't end with a traversal of $e_1$ \\
  \textbf{\xo{}}: & $a_2$ excursions that end but don't begin with a traversal of $e_1$ \\
  \textbf{\oo{}}: & $a_3$ excursions that both begin and end with traversals of $e_1$
\end{tabular}
\end{center}
\end{samepage}
We also refer to \ox{} and \oo{} excursions as \textbf{1-initial}, and \xx{} and \xo{} excursions as \textbf{\xinit{}}.

We refer to the edge traversed in arriving for the first visit to $v$ as the \textbf{arrival edge}
and to the edge traversed in departing from the last visit to $v$ as the \textbf{departure edge}.
We call their traversals the \textbf{arrival} and the \textbf{departure} respectively.
To account for these,
we define $a^+_1$ to be $a_1+1$ if the
departure edge is $e_1$
and to be $a_1$ otherwise,
and define $a^+_2$ to be $a_2+1$ if the
arrival edge is $e_1$
and to be $a_2$ otherwise.

So, to transform a tour on $\Hzero$ into one on $H$, we perform the following three steps: \vspace{-6pt}
\begin{enumerate}
  \item If $a^+_2>a_1+1$, reverse the direction of the last $\floor{\half(a^+_2-a_1)}$ of the \xo{} excursions (making them \ox{}). \\
      On the other hand, if $a^+_2<a_1$, reverse the direction of the last $\ceil{\half(a_1-a^+_2)}$ of the \ox{} excursions (making them \xo{}). \\
      Update the values of
      $a_1$ and $a_2$
      to reflect
      these reversals; we now have $a^+_2=a_1$ or $a^+_2=a_1+1$.
  \item If $a^+_1+a^+_2$ is even ($h_1$ is even) or $a^+_1=a_1$
      (the departure edge \emph{isn't} $e_1$),
      add a new \ox{} excursion consisting of a tour around the cycle $C$ (returning to $v$ along $e_2$); this should be added following all the existing excursions. \\
      Also, if $a^+_1+a^+_2$ is even
      ($h_1$ is even)
      or $a^+_1=a_1+1$
      (the departure edge \emph{is} $e_1$),
      add a new \xo{} excursion consisting of a tour around the cycle $C$ (departing from $v$ along $e_2$); this should be added following all the existing excursions. \\
      Update the values of
      $a_1$ and $a_2$
      to reflect the presence of the new excursion(s); we now have $a^+_2=a^+_1$.
  \item Change the interleaving of
      the 1-initial excursions
      with
      the \xinit{} excursions
      so that
      each visit to $v$ along $e_1$ returns immediately along $e_1$.
      This is always possible (see below) and there is only one way of doing it.
      We now have a valid tour on $H$.
\end{enumerate}
\begin{figure}[ht]
\newcommand{\gp}{@{$\:\,$}}
\newcommand{\mb}[1]{$\:\!\!$\textbf{#1}$\:\!\!$}
\newcommand{\mi}[1]{$\:\!\!$\emph{#1}$\:\!\!$}
\newcommand{\mbi}[1]{$\!\!$\textbf{\emph{#1}}$\!\!$}
\begin{center}
\begin{tabular}
{|l|c\gp|\gp c\gp|\gp c\gp|\gp c\gp|\gp c\gp|\gp c\gp|\gp c\gp|\gp c\gp|\gp c\gp|\gp c\gp|\gp c\gp|\gp c\gp|\gp c\gp|\gp c|}
\hline
      &-1&3-2&3-1&\mb{1-1}&3-1&2-1&2-1&3-2&\mb{1-3}&3-1&2-2&\mb{1-1}&&2- \\ \hline
Step~1&-1&3-2&3-1&\mb{1-1}&3-1&2-1&\mbi{1-2}&3-2&\mb{1-3}&\mbi{1-3}&2-2&\mb{1-1}&&2- \\ \hline
Step~2&-1&3-2&3-1&\mb{1-1}&3-1&2-1&\mb{1-2}&3-2&\mb{1-3}&\mb{1-3}&2-2&\mb{1-1}&\mb{1-2}&2- \\ \hline
Step~3&-1&\mb{1-1}&\mb{1-2}&3-2&3-1&\mb{1-3}&3-1&\mb{1-3}&2-1&\mb{1-1}&\mb{1-2}&3-2&2-2&2- \\ \hline
\end{tabular}
\end{center}
\begin{center}
\begin{tabular}
{|l|c\gp|\gp c\gp|\gp c\gp|\gp c\gp|\gp c\gp|\gp c\gp|\gp c\gp|\gp c\gp|\gp c\gp|\gp c\gp|\gp c\gp|\gp c\gp|\gp c\gp|\gp c\gp|\gp c\gp|\gp c|}
\hline
      &-3&\mb{1-1}&\mb{1-3}&\mb{1-3}&3-1&\mb{1-2}&3-3&\mb{1-2}&2-1&\mb{1-1}&3-1&2-3&\mb{1-1}&&&1- \\ \hline
Step~1&-3&\mb{1-1}&\mb{1-3}&\mb{1-3}&3-1&\mb{1-2}&3-3&\mi{2-1}&2-1&\mb{1-1}&3-1&2-3&\mb{1-1}&&&1- \\ \hline
Step~2&-3&\mb{1-1}&\mb{1-3}&\mb{1-3}&3-1&\mb{1-2}&3-3&2-1&2-1&\mb{1-1}&3-1&2-3&\mb{1-1}&\mb{1-2}&2-1&1- \\ \hline
Step~3&-3&3-1&\mb{1-1}&\mb{1-3}&3-3&2-1&\mb{1-3}&2-1&\mb{1-2}&3-1&\mb{1-1}&\mb{1-1}&\mb{1-2}&2-3&2-1&1- \\ \hline
\end{tabular}
\vspace{-3pt}
\end{center}
\caption{Two examples of transforming tours by modifying excursions}\label{figModifyExcursions}
\end{figure}
Figure~\ref{figModifyExcursions} shows two examples of this process. The two-digit entries in the table represent the initial and final edges traversed by excursions
from $v$;
the single-digit entries give the
arrival and departure edges;
$e_3$ is an additional edge incident to $v$.
1-initial excursions (whose interleaving with the
\xinit{}
excursions is modified by Step~3) are shown in bold.
In Step~1, excursions which are reversed are shown in italics.

\vspace{-9pt}
\subsubsection*{Validation of Step~3}
\vspace{-9pt}
If we consider the
1-initial excursions and the \xinit{} excursions as two separate lists,
with the
\xinit{} excursions
(together with the arrival and departure)
as ``fixed'', then we can insert
1-initial excursions into the list of \xinit{} excursions as follows: \vspace{-12pt}
\begin{quote}
Following
each
\xo{} excursion
(and the arrival if it is along $e_1$),
place the next unused \ox{} excursion
together with any unused 1-1 excursions that precede it.
\end{quote}
\vspace{-12pt}
This procedure is successful, and ensures that
each visit to $v$ along $e_1$ returns immediately along $e_1$ as along as
the number of traversals of $e_1$ towards $v$ equals the number of traversals of $e_1$ away from $v$, unless either
\vspace{-12pt}
\begin{itemize}
  \itemsep0pt
  \item the departure edge is \emph{not} $e_1$ and the last 1-initial excursion is a 1-1 excursion (the minimal example being~\mbox{-2~1-1~2-}, using the notation of Figure~\ref{figModifyExcursions}), or
  \item the departure edge \emph{is} $e_1$ and the last \xinit{} excursion is a \xx{} excursion
   (the minimal example being
   \mbox{-1~2-2~1-}).
\end{itemize}
\vspace{-12pt}
The rules controlling the addition of new final \xo{} and \ox{} excursions in Step~2 guarantee both that the number of traversals of $e_1$
towards $v$ is the same as the number of traversals of $e_1$ away from $v$, and also that neither of the two exceptional cases occur. Thus Step~3 is always valid.

\vspace{-9pt}
\subsubsection*{Counting}
\vspace{-9pt}
Step~2 can add at most two additional excursions from $v$ (around $C$), so
given a tour in $\WWW_{\Hzero}((h_i);u)$, this process produces a tour in $\WWW_H(2k_1,h'_2,\ldots,h'_m;u)$
where $k_1=\floor{\half h_1}+1$,
and for each $i$, $h_i \leqslant h'_i \leqslant h_i + 2$.

After completing Step~1, there are $a_1+a_2+1$ ways in which it could be undone (reverse no more than $a_1$ \ox{} excursions, reverse no more than $a_2$ \xo{} excursions, or do nothing). Since $h_1=a_1+a_2+2a_3$, this does not exceed $h_1+1$.

Also, after Step~3, there are either $k_1$ or $k_1-1$ excursions that begin with a traversal of $e_1$ that could, prior to the step, have been arbitrarily interleaved with those that don't.

Thus we see that there are
no more than
$$
(h_1+1)\max\!\left[\binom{\vis{H}{v}+k_1-1}{k_1},\binom{\vis{H}{v}+k_1-1}{k_1-1}\right]
\;\leqslant\;
2\+k_1\binom{\vis{H}{v}+k_1}{k_1}
$$
distinct tours in
$\WWW_{\Hzero}((h_i);u)$ that generate any specific tour in $\WWW_H(2k_1,h'_2,\ldots,h'_m;u)$.

Hence, 
\begin{equation}\label{eqSplitOnce}
W_{\Hzero}((h_i);u)
\;\leqslant\;
2\+k_1\binom{\vis{H}{v}+k_1}{k_1} W_H(2k_1,h'_2,\ldots,h'_m;u).
\end{equation}

Note also that either $\vis{H_0}{v}=\vis{H}{v}+k_1-2$ or $\vis{H_0}{v}=\vis{H}{v}+k_1-1$ (depending on whether $h_1$ is even or odd), and so
\begin{equation}\label{eqSplitOnceVisits}
\vis{H_0}{v}<\vis{H}{v}+k_1.
\end{equation}
Furthermore, $\vis{H}{v}$ is positive, since
the additional
excursion(s) ensure that $h'_2$ is positive.

\subsubsection*{II. Fully splitting one vertex}
For a given vertex $v$, let
$H_0,H_1,\ldots,H_r$
be the subsequence of graphs that corresponds to the splitting of $v$.
In the second stage of our proof, we
relate the number of tours on $H_0$ to the number of tours on $H_r$.

Note again that
$\vis{H_0}{v}=\vis{G}{v}$
and
$\vis{H_r}{v}=\vis{T}{v}$
since the splitting of other vertices cannot affect the number of visits to $v$.

We assume that $\vis{G}{v}$ is positive, and hence that $\vis{H_0}{v},\ldots,\vis{H_r}{v}=\vis{T}{v}$ are all positive too.

Let $e_1,\ldots,e_r$ be the new pendant edges in $T$ added when $v$ is split, and let $e_1,\ldots,e_r$ also denote the corresponding edges in $G$.
Then, by~\eqref{eqSplitOnceVisits},
for some $k_1,\ldots,k_r$ such that $\half h_i\leqslant k_i\leqslant \half h_i+i$,
we have
$
\vis{H_{i-1}}{v}\;<\;\vis{H_i}{v}+k_i$,
and thus
$$\vis{H_{i-1}}{v}
\;<\;\vis{T}{v}+k_i+
\ldots+k_r.$$
Hence, by iterating inequality~\eqref{eqSplitOnce},
if
$h'_i=2k_i$ for $1\leqslant i\leqslant r$,
then for some $h'_{r+1},\ldots,h'_m$ such that $h_i\leqslant h'_i\leqslant h_i+2\+r$,
\begin{align}\label{eqSplitVertex}
W_{H_0}((h_i);u)
\;\leqslant\;\; &
2^r
\+\Bigg(\prod_{i=1}^r k_i\+ \binom{\vis{H_i}{v}+k_i}{k_i}\!\Bigg)
\+W_{H_r}((h'_i);u)
\nonumber \\[10pt]
< \;\; &
2^r
\+\Bigg(\prod_{i=1}^r k_i\+ \binom{\vis{T}{v}+\sum_{j=i}^r k_j}{k_i}\!\Bigg)
\+W_{H_r}((h'_i);u)
\nonumber \\[10pt]
= \;\; &
2^r
\+\Big(\prod_{i=1}^rk_i\Big)
\+\binom{\vis{T}{v}+
\sum_{i=1}^r k_i
}{\vis{T}{v},\,k_1,\,\ldots,\,k_r}
\+W_{H_r}((h'_i);u).
\end{align}

\subsubsection*{III. Fully splitting all vertices}
In the third stage of the proof, we relate the number of tours on $G$ to the number
of tours on $T$ and then apply the tree bounds to establish the required upper bound
for the case in which all the $\vis{G}{v}$ are positive.

For each $v\in V(G)$, let $r(v)$ be the number of times $v$ is split.
Also, let $h=h_1+\ldots+h_m$ be the length of the tours in $\WWW_G((h_i);u)$.

Thus,
with a suitable indexing of the edges around each vertex,
if we
iterate inequality~\eqref{eqSplitVertex} and combine with the upper bound on $\WB_{T}((k_i);u)$ from Corollary~\ref{corTreeBounds},
we get,
for some $k_1,\ldots,k_m$ such that $\half h_i\leqslant k_i\leqslant \half h_i+m$,
\begin{align*}
W_G((h_i);u)
\;\leqslant\;\; &
2^m
\+\Big(\prod_{i=1}^m k_i\Big)
\prod_{v\in V(G)}
\binom{\vis{T}{v}+
\sum_{i=1}^{r(v)} k^v_i
}{\vis{T}{v},\,k^v_1,\,\ldots,\,k^v_{r(v)}}
\+\WB_T((k_i);u) \\[10pt]
\leqslant\;\; &
(h+2m)^m
\prod_{v\in V(G)}
\binom{\vis{T}{v}+
\sum_{i=1}^{r(v)} k^v_i
}{\vis{T}{v},\,k^v_1,\,\ldots,\,k^v_{r(v)}}
\+\binom{\vis{T}{v}}{{k^v_{r(v)+1}},\,\ldots,\,{k^v_{\dg_G(v)}}} \\[10pt]
=\;\; &
(h+2m)^m
\prod_{v\in V(G)}
\binom
{k^v_1+\ldots+k^v_{\dg(v)}}
{k^v_1,\,\ldots,\,k^v_{\dg(v)}},
\end{align*}
using the fact
that for each $i$, we have $k_i\leqslant \half h+m$.

\subsubsection*{IV. Unvisited vertices}
Thus we have the desired result for the case in which all the $\vis{G}{v}$ are positive.
To complete the proof, we consider families of tours in which some of the vertices are not visited.

If not all the $\vis{G}{v}$ are positive, then
let $G^+$ be the subgraph of $G$ induced by the vertices actually visited by tours in $\WWW_G((h_i);u)$.
Then
$W_G((h_i);u)=W_{G^+}((h_i);u)$. But we know that
\begin{align*}
W_{G^+}((h_i);u)
\;\leqslant\;\; &
(h+2m)^m
\prod_{v\in V(G^+)}
\binom
{k^v_1+\ldots+k^v_{\dg(v)}}
{k^v_1,\,\ldots,\,k^v_{\dg(v)}} \\[10pt]
\;\leqslant\;\; &
(h+2m)^m
\prod_{v\in V(G)}
\binom
{k^v_1+\ldots+k^v_{\dg(v)}}
{k^v_1,\,\ldots,\,k^v_{\dg(v)}}
\end{align*}
because the inclusion of
the unvisited vertices in
$V(G)\setminus V(G^+)$ cannot decrease the value of the product.
So the bound holds for any family $\WWW_G((h_i);u)$.

This concludes the proof of Lemma~\ref{lemmaUpperBound}.
\end{proof}

\subsection{Tours of even length}
In this subsection, we consider the family of all tours of \emph{even length} on a graph and prove that it grows at the same rate as the more restricted family of all balanced tours.

To do this, we make use of the fact
that the growth rate of a collection of objects does not change if we make
``small'' changes to what we are counting.
This follows directly from the definition of the growth rate.
We will also use this observation when we consider the relationship between permutation grid classes and families of tours on graphs in the next section.
\begin{obs}\label{obsGRPolySum}
If $\SSS$ is a collection of objects, containing $S_k$ objects of each size $k$, that has a finite growth rate, then for any positive polynomial $P$ and fixed non-negative integers $d_1,d_2$ with $d_1\leqslant d_2$,
$$
\limkinfty \Big(P(k)\sum_{j\,=\,k+d_1}^{k+d_2}\!S_j\Big)^{1/k}
\;=\;
\limkinfty S_k^{1/k}
\;=\;
\gr(\SSS)
.
$$
\end{obs}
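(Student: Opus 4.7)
The content of this observation is that the $k$th root operation is insensitive to three kinds of perturbation: multiplication by a subexponential factor (the polynomial $P(k)$), shifting the index by a constant, and replacing a single term by a bounded sum of neighbouring terms. The plan is to sandwich $P(k)\sum_{j=k+d_1}^{k+d_2} S_j$ between two simpler quantities whose $k$th roots both tend to $\gamma := \gr(\SSS)$, which is finite by hypothesis.

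For the lower bound, I would use that $S_j\geqslant 0$ for all $j$, and that $P(k)\geqslant 1$ for all sufficiently large $k$ since $P$ is a positive polynomial. Retaining only the single term $j=k+d_1$ in the sum yields
$$
P(k)\sum_{j=k+d_1}^{k+d_2} S_j \;\geqslant\; S_{k+d_1}.
$$
Writing $S_{k+d_1}^{1/k} = \bigl(S_{k+d_1}^{1/(k+d_1)}\bigr)^{(k+d_1)/k}$ and using that $(k+d_1)/k \to 1$ and $S_n^{1/n}\to\gamma$, the right-hand side raised to the power $1/k$ tends to $\gamma$.

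For the upper bound, I would bound crudely
$$
P(k)\sum_{j=k+d_1}^{k+d_2} S_j \;\leqslant\; (d_2-d_1+1)\+P(k) \max_{d_1\leqslant i\leqslant d_2} S_{k+i}.
$$
Taking $k$th roots, $P(k)^{1/k}\to 1$ because $\log P(k) = O(\log k)$, the combinatorial constant satisfies $(d_2-d_1+1)^{1/k}\to 1$, and for each fixed $i$ the reindexing $S_{k+i}^{1/k} = \bigl(S_{k+i}^{1/(k+i)}\bigr)^{(k+i)/k}\to\gamma$ applies. Since the maximum is taken over the fixed finite index set $\{d_1,\ldots,d_2\}$, its $k$th root also tends to $\gamma$. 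Together with the lower bound, this proves both asserted limits.

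The argument has no serious obstacle: the only minor care required is that $S_k$ may vanish for finitely many $k$ (which is harmless, as the inequalities above still hold), and the case $\gamma = 0$ is handled identically, with both sandwich bounds tending to $0$.
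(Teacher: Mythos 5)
Your proof is correct. The paper gives no formal proof of this observation — it simply remarks in the preceding sentence that the statement ``follows directly from the definition of the growth rate'' — and your sandwich argument is exactly the standard way to make that remark precise, so there is no divergence from the paper's (nonexistent) approach to compare against.

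One small imprecision worth flagging: you assert $P(k)\geqslant 1$ for all sufficiently large $k$ ``since $P$ is a positive polynomial.'' This fails if $P$ is a constant in $(0,1)$. But the lower bound does not actually need $P(k)\geqslant 1$: keeping the factor $P(k)$ gives $P(k)\sum_{j=k+d_1}^{k+d_2} S_j \geqslant P(k)\,S_{k+d_1}$, and since $P(k)>0$ with $\log P(k)=O(\log k)$ one has $P(k)^{1/k}\to 1$ from below as well as above, so the $k$th root of the lower bound still tends to $\gamma$. With that trivial adjustment the proof is airtight, including the $\gamma=0$ and ``some $S_j$ vanish'' edge cases you correctly handle at the end.
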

We can employ our upper bound for $W_G((h_i);u)$ to
give us an upper bound for tours of a
specific even length.
We use
$W_G(h;u)$ to denote the number of tours of length $h$
starting and ending at vertex $u$.
\begin{lemma}\label{lemmaUpperBoundNew}
If $G$ is a connected graph with $m$ edges and $u$ is any vertex of $G$, then
the number of tours of length $2k$ on $G$ starting and ending at vertex $u$ is bounded above as follows:
$$
W_G(2k;u)
\;\leqslant\;
(m+1)^m\+
(2k+2m)^m
\displaystyle\sum_{j=k}^{k+m^2}
\sum\limits_{k_1+\ldots+k_m\,=\,j}
\+\+
\prod\limits_{v\in V(G)}\dbinom{{k^v_1}+
\ldots+{k^v_{\dg(v)}}}
{{k^v_1},\,
\ldots,\,{k^v_{\dg(v)}}}.
$$
\end{lemma}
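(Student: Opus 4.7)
The plan is to decompose $W_G(2k;u)$ according to edge-traversal vectors, apply Lemma~\ref{lemmaUpperBound} to each summand, and then rearrange the result into the stated form.

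First I would use the identity
\begin{equation*}
W_G(2k;u) \;=\; \sum_{\substack{h_1,\ldots,h_m \geq 0 \\ h_1+\ldots+h_m = 2k}} W_G(h_1,\ldots,h_m;u),
\end{equation*}
which is just the observation that every tour of length $2k$ has a well-defined tuple of edge-traversal counts summing to $2k$. Applying Lemma~\ref{lemmaUpperBound} to each summand, for each $(h_i)$ with $\sum h_i = 2k$ there is an auxiliary tuple $(k_i^*) = (k_i^*(h_i))$ with $k_i^* \in [\half h_i, \half h_i + m]$, and consequently $\sum_i k_i^* \in [k, k+m^2]$, satisfying
\begin{equation*}
W_G((h_i);u) \;\leq\; (2k+2m)^m \prod_{v\in V(G)}\binom{k_1^{*v}+\ldots+k_{\dg(v)}^{*v}}{k_1^{*v},\ldots,k_{\dg(v)}^{*v}}.
\end{equation*}
The prefactor $(2k+2m)^m$ depends only on $k$ and $m$, and so can be pulled out of the outer sum over $(h_i)$.

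Next, I would reindex the remaining sum by $(k_i)$ rather than $(h_i)$. For each fixed $(h_i)$, the admissible tuple $(k_i^*)$ lies in the box
\begin{equation*}
B(h_i) \;=\; \prod_{i=1}^{m}\,\bigl\{k\in\mathbb{Z} : \half h_i \leq k \leq \half h_i + m\bigr\},
\end{equation*}
which contains at most $(m+1)^m$ integer tuples. Because every multinomial coefficient is non-negative, the single term indexed by $(k_i^*)$ is bounded above by the sum of all such products indexed by tuples in $B(h_i)$. Interchanging the order of summation then replaces the joint sum over $(h_i)$ and $(k_i)\in B(h_i)$ by a sum over $(k_i)$ and over preimages $(h_i)$ satisfying both $\sum h_i = 2k$ and $h_i\in[2k_i-2m,\,2k_i]$; since the product depends only on $(k_i)$, the outer sum can finally be grouped by $j = \sum_i k_i$, which ranges only over the $m^2+1$ values in $[k, k+m^2]$.

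The main obstacle is quantitative bookkeeping: verifying that the overcounting really reduces to the single polynomial factor $(m+1)^m$. The cleanest route is to use the window size of at most $m+1$ values per coordinate (forced by $k_i \in [\half h_i, \half h_i + m]$) together with the parity constraint $\sum h_i = 2k$ to absorb any residual preimage multiplicity into this leading factor. Once that is in hand, assembling the $(2k+2m)^m$ prefactor, the $(m+1)^m$ overcount, and the outer sum over $j\in[k,k+m^2]$ produces exactly the stated inequality.
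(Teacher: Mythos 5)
Your approach is essentially the paper's: decompose $W_G(2k;u)$ over traversal vectors $(h_i)$ with $\sum h_i = 2k$, apply Lemma~\ref{lemmaUpperBound} to each summand, then reindex by the resulting $(k_i)$ tuples (noting $\sum k_i \in [k, k+m^2]$) and absorb the preimage multiplicity into a polynomial prefactor. The ``sum over the box $B(h_i)$ then interchange'' detour you introduce is a correct but unnecessary way to arrive at the reindexing; the paper simply observes directly that the $(h_i)\mapsto(k_i)$ map has fibers of bounded size. One caution on your bookkeeping remark: the window you cite, $k_i\in[\tfrac12 h_i,\tfrac12 h_i + m]$, has $m+1$ values of $k_i$ for \emph{fixed} $h_i$; going the other way (which is what the reindexing needs) gives $h_i\in[2k_i-2m,\,2k_i]$, i.e.\ $2m+1$ values of $h_i$ per fixed $k_i$, so the naive preimage count is $(2m+1)^m$ rather than $(m+1)^m$. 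Since any polynomial-in-$m$ factor is immaterial to the downstream growth-rate computation (and the paper's own count is no more explicit on this point), this does not constitute a gap, but you should not wave it away as ``bookkeeping to be absorbed'' — just state the honest $(2m+1)^m$ bound or leave the constant unspecified.
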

\begin{proof}
From Lemma~\ref{lemmaUpperBound}, for any vertex $u$ of a graph $G$ with $m$ edges, we know that
$$
\begin{array}{rcl}
W_G(2k;u)
& = &
\displaystyle
\sum\limits_{h_1+\ldots+h_m\,=\,2k}
W_G((h_i);u) \\[15pt]
& \leqslant &
(2k+2m)^m
\displaystyle
\sum\limits_{h_1+\ldots+h_m\,=\,2k}
\+\+
\prod\limits_{v\in V(G)}\dbinom{{k^v_1}+{k^v_2}+\ldots+{k^v_{\dg(v)}}}
{{k^v_1},\,{k^v_2},\,\ldots,\,{k^v_{\dg(v)}}}
\end{array}
$$
where
each $k_i$ is dependent on the sequence $(h_i)$ with
$\half h_i\leqslant k_i\leqslant \half h_i+m$.

There are no more than
$(m+1)^m$
different values of the $h_i$ that give rise to any specific set of $k_i$, and we have
$k\leqslant k_1+\ldots+k_m\leqslant k+m^2$,
so
\[
W_G(2k;u)
\;\leqslant\;
(m+1)^m\+
(2k+2m)^m
\displaystyle\sum_{j=k}^{k+m^2}
\sum\limits_{k_1+\ldots+k_m\,=\,j}
\+\+
\prod\limits_{v\in V(G)}\dbinom{{k^v_1}+
\ldots+{k^v_{\dg(v)}}}
{{k^v_1},\,
\ldots,\,{k^v_{\dg(v)}}}.
\qedhere
\]
\end{proof}
Now, drawing together our upper and lower bounds enables us to deduce that
the family of balanced tours on a graph $G$ grows at the same rate as the family of all tours of even length on $G$.
We use $\WWWB_G$ for the family of all balanced tours on $G$ and
$\WWWE_G$ for the family of all tours of even length on $G$, where, in both cases,
we consider the size of a tour to be \emph{half} its length.
\begin{thm}\label{thmBalancedEqualsEven}
The growth rate of the family of balanced tours ($\WWWB_G$) on a connected graph is the same as growth rate of the family of all tours of even length ($\WWWE_G$) on the graph.
\end{thm}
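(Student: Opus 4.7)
The plan is to sandwich the growth rate of even-length tours against that of balanced tours, combining Lemmas~\ref{lemmaLowerBound} and~\ref{lemmaUpperBoundNew} with the observation that concatenation makes balanced tours super-multiplicative.

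The easy direction is $\gr(\WWWB_G) \leq \gr(\WWWE_G)$: every balanced tour traverses each edge an even number of times in total, hence has even length, so $\WWWB_G \subseteq \WWWE_G$ under the common half-length size convention. For the reverse, fix $u \in V(G)$, write $b_k$ for the number of balanced tours at $u$ of size $k$ and $e_k$ for the number of tours of length $2k$ at $u$, and set
\[
S_j \;=\; \sum_{k_1 + \ldots + k_m\,=\,j}\, \prod_{v \in V(G)} \binom{k^v_1 + \ldots + k^v_{\dg(v)}}{k^v_1,\, \ldots,\, k^v_{\dg(v)}}.
\]
Lemma~\ref{lemmaLowerBound}, applied with the substitution $k_i = k'_i + 1$ to enforce positivity of each parameter, gives $b_{k+m} \geq S_k$. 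Lemma~\ref{lemmaUpperBoundNew} already provides $e_k \leq P(k) \sum_{j=k}^{k+m^2} S_j$ for an explicit polynomial $P$. Stringing these together yields
\[
e_k \;\leq\; P(k) \sum_{j\,=\,k + m}^{k + m^2 + m} b_j.
\]

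To finish, I would first establish existence of $\gr(\WWWB_G)$ by super-multiplicativity: concatenating balanced tours at $u$ of sizes $k$ and $\ell$ produces a balanced tour of size $k+\ell$, and distinct ordered pairs give distinct concatenations, since the split point is determined by the length of the first factor; hence $b_{k+\ell} \geq b_k\+ b_\ell$, and Fekete's lemma gives $\lim b_k^{1/k} = \gr(\WWWB_G)$. Observation~\ref{obsGRPolySum} applied to the displayed inequality then yields $\grup(\WWWE_G) \leq \gr(\WWWB_G)$, while $b_k \leq e_k$ gives $\grlow(\WWWE_G) \geq \grlow(\WWWB_G) = \gr(\WWWB_G)$. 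These together force $\gr(\WWWE_G)$ to exist and equal $\gr(\WWWB_G)$.

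The main obstacle is spotting the index shift $k_i \mapsto k_i + 1$ that converts the positive-entry lower bound of Lemma~\ref{lemmaLowerBound} into precisely the sum $S_k$ that appears (over a window of width $m^2+1$) in the upper bound of Lemma~\ref{lemmaUpperBoundNew}. Once this alignment is recognised, the argument reduces to routine limit-passing via the growth-rate observation, together with the Fekete-style existence argument for $\gr(\WWWB_G)$.
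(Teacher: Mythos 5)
Your proposal is correct and follows essentially the same route as the paper: the substitution $k_i \mapsto k_i + 1$ in Lemma~\ref{lemmaLowerBound} to produce exactly the products appearing in Lemma~\ref{lemmaUpperBoundNew}, the resulting sandwich $W_G(2k;u) \leqslant P(k)\sum_{j=k+m}^{k+m+m^2}\WB_G(j;u)$ together with $\WB_G(k;u)\leqslant W_G(2k;u)$, and Observation~\ref{obsGRPolySum} to pass to growth rates. The one place you go beyond what the paper writes down is your Fekete-style super-multiplicativity argument for the existence of $\gr(\WWWB_G)$: the paper invokes Observation~\ref{obsGRPolySum}, whose hypothesis requires the collection to already have a growth rate, without explicitly justifying that $\WWWB_G$ does. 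Your concatenation argument fills this gap cleanly (the paper could alternatively have appealed to Lemma~\ref{lemmaGREven}, which independently establishes that $\gr(\WWWE_G)$ exists, and then run the sandwich the other way). So this is the same proof, made slightly more self-contained on the existence point.
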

\begin{proof}
From Lemma~\ref{lemmaLowerBound}, we know that
$$
\prod_{v\in V(G)}\binom
{{k^v_1}+
\ldots+{k^v_{\dg(v)}}}
{{k^v_1},\,
\ldots,\,{k^v_{\dg(v)}}}
\;\leqslant\;
\WB_G(k_1\!+\!1,
\ldots,k_m\!+\!1;u).
$$
Substitution in the inequality in the statement of Lemma~\ref{lemmaUpperBoundNew} then yields the following relationship between families of even-length and balanced tours:
$$
W_G(2k;u)
\;\leqslant\;
(m+1)^m\+
(2k+2m)^m
\displaystyle\sum_{j=k+m}^{k+m+m^2}
\WB_G(j;u)
$$
where
$\WB_G(j;u)$ is the number of balanced tours of length $2j$ on $G$ starting and ending at $u$.
Combining this with Observation~\ref{obsGRPolySum} and the fact that $\WB_G(k;u)\leqslant W_G(2k;u)$
produces the result $\gr(\WWWB_G)=\gr(\WWWE_G)$.
\end{proof}
Finally, before moving on to the relationship with permutation grid classes,
we
determine the value of the growth rate of
the family of even-length tours
$\WWWE_G$.
This requires only elementary algebraic graph theory.
We
recall here
the relevant 
concepts. 
The \textbf{adjacency matrix}, $A = A(G)$ of a graph $G$ has rows and columns indexed
by the vertices of $G$, with $A_{i,j}= 1$ or $A_{i,j}= 0$ according to whether vertices $i$ and $j$
are adjacent (joined by an edge) or not.
The \textbf{spectral radius} $\rho(G)$ of a graph $G$ is the largest eigenvalue (which is real and positive) of its adjacency matrix.

\begin{lemma}\label{lemmaGREven}
The growth rate of $\WWWE_G$ exists and is equal to the square of the spectral radius of $G$.
\end{lemma}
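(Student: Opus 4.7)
The plan is to interpret the counting question linear-algebraically, then read off the growth rate from the spectrum of the adjacency matrix.

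First, I would translate the combinatorial object into a standard algebraic quantity. A tour of length $\ell$ on $G$ starting (and ending) at $u$ is, by definition, a closed walk of length $\ell$ in $G$ based at $u$, and the well-known identity gives the number of these as $(A^\ell)_{u,u}$, where $A = A(G)$ is the adjacency matrix. Summing over all possible basepoints $u\in V(G)$ yields $\sum_{u}(A^\ell)_{u,u} = \tr(A^\ell)$ for the number of tours of length $\ell$ on $G$. Since $\WWWE_G$ is parameterised so that the size of a tour is half its length, the number of elements of $\WWWE_G$ of size $k$ equals $\tr(A^{2k})$.

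Second, I would use the spectral decomposition. Since $A$ is real symmetric, it has real eigenvalues $\lambda_1,\ldots,\lambda_n$ (counted with multiplicity) and
$$\tr(A^{2k}) \;=\; \sum_{i=1}^{n}\lambda_i^{2k}.$$
By definition, $\rho = \rho(G) = \max_i |\lambda_i|$, so $\lambda_i^{2k}\leqslant \rho^{2k}$ for every $i$, giving $\tr(A^{2k})\leqslant n\+\rho^{2k}$. For the matching lower bound, I invoke Perron--Frobenius: because $G$ is connected, $A$ is irreducible and non-negative, so $\rho$ is an eigenvalue of $A$ and hence contributes $\rho^{2k}$ to the sum. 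Thus
$$\rho^{2k} \;\leqslant\; \tr(A^{2k}) \;\leqslant\; n\+\rho^{2k}.$$

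Third, taking $k$th roots and sending $k\to\infty$ sandwiches $\tr(A^{2k})^{1/k}$ between $\rho^{2}$ and $n^{1/k}\rho^{2}\to\rho^{2}$, so the limit exists and equals $\rho(G)^{2}$. This proves $\gr(\WWWE_G)=\rho(G)^{2}$.

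There is no real obstacle here; the argument is entirely standard algebraic graph theory. The only point requiring any care is ensuring that the limit (and not merely $\limsup$ or $\liminf$) exists, which the two-sided sandwich above handles immediately. Note that nothing in the argument depends on whether $G$ is bipartite: in the bipartite case $-\rho$ is also an eigenvalue and the leading asymptotic constant is $2$ rather than $1$, but this factor is absorbed by the $k$th root and plays no role in the growth rate.
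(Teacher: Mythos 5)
Your proof is correct and follows essentially the same route as the paper: identify the size-$k$ count with $\tr(A^{2k})=\sum_i\lambda_i^{2k}$, then sandwich between $\rho^{2k}$ and $n\rho^{2k}$ and take $k$th roots. The appeal to Perron--Frobenius for the lower bound is a harmless extra: since you are raising to even powers, it suffices that some $\lambda_i$ has $|\lambda_i|=\rho$, which is true by the definition of spectral radius without needing irreducibility.
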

\begin{proof}
If $G$ has $n$ vertices, then
$$
W_G(2k)
\;=\;
\sum\limits_{u\in V(G)} \!W_G(2k;u)
\;=\;
\tr(A(G)^{2k})
\;=\;
\sum\limits_{i=1}^n \lambda_i^{2k},
$$
where the $\lambda_i$ are the (real) eigenvalues of $A(G)$, the adjacency matrix of $G$, since the diagonal entries of $A(G)^{2k}$ count the number of tours of length $2k$ starting at each vertex. Thus,
$$
\gr(\WWWE_G)
\;=\;
\limkinfty\Big(\sum\limits_{i=1}^n \lambda_i^{2k}\Big)^{1/k}
$$
Now the spectral radius is given by
$
\rho
=
\rho(G)
=
\max\limits_{1\leqslant i\leqslant n} \lambda_i,
$
so we can conclude that
$$
\rho^2
\;=\;
\limkinfty (\rho^{2k})^{1/k}
\;\leqslant\;
\limkinfty\Big(\sum\limits_{i=1}^n \lambda_i^{2k}\Big)^{1/k}
\;\leqslant\;
\limkinfty \big((n\rho)^{2k}\big)^{1/k}
\;=\;
\rho^2.
$$
Thus, $\gr(\WWWE_G)=\rho(G)^2$.
\end{proof}

\section{Grid classes}\label{sectGridClasses}
In this section, we prove our main theorem, that
the growth rate of a monotone grid class of permutations is
equal to the square of the spectral radius of its row-column graph.

The proof is as follows:
First, we present an explicit expression for the number of \emph{gridded permutations} of a given length.
Then, we
use this to show that the class of gridded permutations grows at the same rate as the family of tours of even length on its row-column graph.
Finally, we demonstrate that the growth rate of a grid class is the same as the growth rate of the corresponding class of gridded permutations.

Let us begin by formally introducing the relevant permutation grid class concepts.

\subsection{Notation and definitions}\label{sectGridClassDefs}
When defining grid classes, to match the way we view permutations graphically, we index matrices from the lower left corner, with the order of the indices reversed from the normal convention. For example, $M_{2,1}$ is the entry in the second column from the left in the bottom row of $M$.

\begin{figure}[ht]
  $$
  \begin{tikzpicture}[scale=0.20]
    \plotperm{8}{3,1,5,6,7,4,8,2}
      \draw[thick] (5.5,.5) -- (5.5,8.5);
      \draw[thick] (.5,2.5) -- (8.5,2.5);
  \end{tikzpicture}
  \vspace{-6pt}
  $$
  \caption{A gridding of permutation 31567482 in
  \protect\gctwo{2}{1,1}{-1,-1}
  }\label{figGridding}
\end{figure}
If $M$ is a $0/\!\pm\!1$ matrix with $t$ columns and $u$ rows, then an \textbf{$M$-gridding} of a permutation $\sigma$ of length $k$ is a pair of
sequences
$\half=c_0\leqslant c_1\leqslant \ldots\leqslant c_t=k+\half$ (the \textbf{column dividers})
and $\half=r_0\leqslant r_1\leqslant \ldots\leqslant r_u=k+\half$ (the \textbf{row dividers})
such that
for all $i\in\{0,\ldots,t\}$ and $j\in\{0,\ldots,u\}$,
$c_i-\half\in\{0,\ldots,k\}$ and $r_j-\half\in\{0,\ldots,k\}$
and
the subsequence of $\sigma$ with indices between $c_{i-1}$ and $c_i$ and values between $r_{j-1}$ and $r_j$ is
increasing if $M_{i,j}=1$,
decreasing if $M_{i,j}=-1$, and
empty if $M_{i,j}=0$.
For example, in Figure~\ref{figGridding}, $c_1=\frac{11}{2}$ and $r_1=\frac{5}{2}$.

The \textbf{grid class} $\Grid(M)$ is then defined to be the set of all permutations that have an $M$-gridding.
The \textbf{griddings} of a permutation in $\Grid(M)$ are its $M$-griddings.
We say that $\Grid(M)$ has \textbf{size} $m$ if its matrix $M$ has $m$ non-zero entries.

The concept of a grid class of permutations has been generalised, permitting arbitrary permutation classes in each cell (see Vatter~\cite{Vatter2011}). We only consider \emph{monotone} grid classes in this paper, which we simply call ``grid classes''.
An interactive demonstration of grid classes is available
online~\cite{Bevan2012}.

Sometimes we need to consider a permutation along with a specific gridding. In this case, we refer to a permutation together with an $M$-gridding as an \textbf{$M$-gridded permutation}.
We use $\Gridhash(M)$ to denote
the class of all $M$-gridded permutations, every permutation in $\Grid(M)$ being present once with each of its griddings.
We use $\Gridhash_k(M)$ for the set of $M$-gridded permutations of length $k$.

\subsubsection*{Row-column graphs}
If $M$ has $t$ rows and $u$ columns, the \textbf{row-column graph}, $G(M)$, of $\Grid(M)$ is the \emph{bipartite} graph
with vertices $r_1,\ldots,r_t,c_1,\ldots,c_u$ and an edge between $r_i$ and $c_j$ if and only if $M_{i,j}\neq0$ (see Figure~\ref{figRowColumnGraph} for an example).
Note that any bipartite graph is
the row-column graph of some grid class, and that the size (number of edges) of the row-column graph is the same as the size (number of non-zero cells) of the grid class.
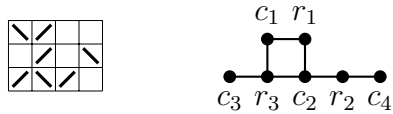
\begin{figure}[ht]
  $$
  \gclass[0.3125]{4}{3}{
    \gcrow{2}{-1, 1,0, 0}
    \gcrow{1}{ 0, 1,0,-1}
    \gcrow{0}{ 1,-1,1, 0}
  }
  \qquad\quad
  \raisebox{-.15in}{
  \begin{tikzpicture}[scale=0.5]
      \draw [thick] (4,0)--(1,0)--(1,1)--(2,1)--(2,0);
      \draw [thick] (0,0)--(1,0);
        \fill[radius=0.17] (0,0) circle ;
        \fill[,radius=0.17] (1,0) circle ;
        \fill[radius=0.17] (2,0) circle;
        \fill[,radius=0.17] (3,0) circle;
        \fill[radius=0.17] (4,0) circle;
        \fill[,radius=0.17] (2,1) circle ;
        \fill[radius=0.17] (1,1) circle ;
        \node[below]at(0,-.1){$c_3$};
        \node[below]at(1,-.1){$r_3$};
        \node[below]at(2,-.1){$c_2$};
        \node[below]at(3,-.1){$r_2$};
        \node[below]at(4,-.1){$c_4$};
        \node[above]at(1,1.1){$c_1$};
        \node[above]at(2,1.1){$r_1$};
    \end{tikzpicture}}
    \vspace{-6pt}
    $$
    \caption{A grid class and its row-column graph}
    \label{figRowColumnGraph}
\end{figure}

The row-column graph of a grid class captures a good deal of structural information about the class, so it is common to apportion properties of the row-column graph directly to the grid class itself, for example speaking of a
connected, acyclic or unicyclic
grid class
rather than of a grid class whose row-column graph is
connected, acyclic or unicyclic.
We follow this convention.

\subsection{Counting gridded permutations}\label{sectCountingGriddedPerms}
It is possible to give an explicit expression for the number of
gridded permutations of length $k$ in any specified grid class.
Observe the similarity to the formulae for numbers of tours.

\begin{lemma}\label{lemmaCountGriddings}
If $G=G(M)$ is the row-column graph of
$\Grid(M)$,
where $G$ has $m$ edges
$e_1,\ldots,e_m$,
then the number of gridded permutations of length $k$ in $\Gridhash(M)$ is
given by
$$
|\Gridhash_k(M)|
\;=\;
\sum_{k_1+\ldots+k_m\,=\,k}
\+\+
\prod_{v\in V(G)}\binom{{k^v_1}+{k^v_2}+\ldots+{k^v_{\dg(v)}}}
{{k^v_1}\!,\,\,{k^v_2}\!,\,\,\ldots,\,\,{k^v_{\dg(v)}}}
$$
where $k^v_1,k^v_2,\ldots,k^v_{\dg(v)}$ are the $k_i$
corresponding to
edges incident to $v$ in $G$.
\end{lemma}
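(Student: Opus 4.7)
The plan is to set up a bijection between $\Gridhash_k(M)$ and a simple combinatorial data-set that can be counted directly, and then recognise the resulting count as the stated product over vertices of $G=G(M)$. An $M$-gridded permutation of length $k$ should be equivalent to the following data: (a) non-negative integers $k_{i,j}$ for each non-zero cell $(i,j)$ with $\sum_{i,j}k_{i,j}=k$, giving the number of points in cell $(i,j)$; (b) for each column $i$, a word listing the rows of the points in column $i$, read in order of position, in which row $j$ appears exactly $k_{i,j}$ times; and (c) for each row $j$, a word listing the columns of the points in row $j$, read in order of value, in which column $i$ appears exactly $k_{i,j}$ times.

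The forward direction of the bijection simply reads off the cell counts and the two kinds of words from the gridded permutation. The reverse direction is where the content lies: given the data, the column dividers are forced by the partial sums $\sum_j k_{i,j}$ and the row dividers by the partial sums $\sum_i k_{i,j}$; each point's column is then determined by its position, its row by its value, and within a fixed cell the $k_{i,j}$ positions (recovered from the column word) are paired with the $k_{i,j}$ values (recovered from the row word) by the unique monotone bijection consistent with the sign of $M_{i,j}$. I would verify that the resulting sequence in each cell is increasing when $M_{i,j}=1$ and decreasing when $M_{i,j}=-1$; this is automatic because, inside a fixed cell, both the assigned positions and the assigned values form consecutive integer runs (the intersection of the column's position range with the row's value range), so the monotone pairing produces a monotone subsequence of the desired type without interfering with the contents of any other cell.

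With the bijection in hand, the count is immediate. The number of choices for the column words is $\prod_{i}\binom{\sum_j k_{i,j}}{k_{i,1},\ldots,k_{i,u}}$ and the number of choices for the row words is $\prod_{j}\binom{\sum_i k_{i,j}}{k_{1,j},\ldots,k_{t,j}}$. To translate into the language of $G(M)$: non-zero cells correspond bijectively to edges $e_1,\ldots,e_m$ of the bipartite graph $G$, and the $k_{i,j}$ become the edge weights $k_1,\ldots,k_m$ with $\sum k_i=k$. For a column vertex $c_i$, the edges incident to $c_i$ in $G$ are exactly the non-zero cells of column $i$, so the column multinomial becomes $\binom{k^{c_i}_1+\ldots+k^{c_i}_{\dg(c_i)}}{k^{c_i}_1,\ldots,k^{c_i}_{\dg(c_i)}}$, and identically for row vertices. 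Since $V(G)$ is partitioned into row and column vertices, the product over all $v\in V(G)$ combines exactly the row and column contributions, and summing over all admissible $(k_1,\ldots,k_m)$ yields the stated formula.

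The main obstacle is not counting but rather articulating the reconstruction step cleanly, particularly checking that no additional compatibility conditions between the column words and the row words are required. The clean fact that makes this work is that the column words alone determine each point's position, the row words alone determine each point's value, and the two choices interact only through the prescribed cell counts $k_{i,j}$; consequently every tuple of data produces a valid $M$-gridded permutation, with no overcounting or undercounting.
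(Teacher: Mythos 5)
Your proof is correct and takes essentially the same route as the paper's: where the paper simply asserts that the interleavings of points within each row and each column of the gridding can be chosen arbitrarily and independently (yielding one multinomial coefficient per row and per column, i.e.\ per vertex of $G$), you make this precise via an explicit bijection onto tuples of cell counts, column words, and row words, and verify that the reconstruction step has no hidden compatibility constraints. One small inaccuracy in your justification: within a fixed cell, the positions assigned by the column word (and likewise the values assigned by the row word) are generally scattered, not a consecutive run; what actually ensures that distinct cells do not interfere is merely that each non-zero cell occupies a rectangle (a column position-interval times a row value-interval) and these rectangles are pairwise disjoint, so the monotone pairing inside one cell cannot clash with another.
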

\begin{proof}
A gridded permutation in $\Gridhash(M)$ consists of a number of points in each of the cells that correspond to a non-zero entry of $M$. For every permutation, the relative ordering of points (increasing or decreasing) within a particular cell is fixed by the value of the corresponding matrix entry.
However, the relative interleaving between points in distinct cells in the same row or column
can be chosen arbitrarily and independently for each row and column.

Now, each vertex in $G$ corresponds to a row or column in $M$, with an incident edge for each non-zero entry in that row or column. Thus,
the number of gridded permutations with $k_i$ points in the cell corresponding to edge $e_i$ for each $i$ is given by the following product of multinomial coefficients:
$$
\prod_{v\in V(G)}\binom{{k^v_1}+{k^v_2}+\ldots+{k^v_{\dg(v)}}}
{{k^v_1}\!,\,\,{k^v_2}\!,\,\,\ldots,\,\,{k^v_{\dg(v)}}}.
$$
The result follows by summing over values of $k_i$ that sum to $k$.
\end{proof}
As an immediate consequence, we have the fact that
the enumeration of a class of gridded permutations depends only on its row-column graph:
\begin{cor}\label{corGRHashEqForSameRCGraph}
If $G(M)=G(M')$, then
$\Gridhash_k(M) = \Gridhash_k(M')$ for all $k$.
\end{cor}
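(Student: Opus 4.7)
The plan is to obtain this corollary as an immediate consequence of Lemma~\ref{lemmaCountGriddings}. That lemma provides a closed-form expression for $|\Gridhash_k(M)|$ as a sum over weak compositions $k_1 + \ldots + k_m = k$ of a product of multinomial coefficients, one factor per vertex of $G = G(M)$, with the factor at vertex $v$ determined by the values $k^v_1, \ldots, k^v_{\dg(v)}$ indexed by the edges incident to $v$. The right-hand side of this formula therefore depends on $M$ \emph{only} through the combinatorial data of its row-column graph: the edge set $\{e_1,\ldots,e_m\}$ and the incidence relation between edges and vertices.

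Given this, the proof consists of the single observation that if $G(M) = G(M')$ then the two formulae obtained by applying Lemma~\ref{lemmaCountGriddings} to $M$ and to $M'$ are term-by-term identical, so the sums coincide. In particular, neither the signs of the non-zero entries of $M$ nor the particular bipartition of the vertex set of $G(M)$ into rows and columns plays any role in the count; the only data used are the degrees and the incidence structure.

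It is worth flagging, for the reader, \emph{why} the signs of the matrix entries drop out, since that is the one point where a careless reader might worry. Within a single cell, the monotonicity condition (increasing for an entry $+1$, decreasing for $-1$) leaves no freedom in the internal order of the points once the number of points in the cell is fixed; the only degrees of freedom are (i) how many points to place in each cell, and (ii) how to interleave the horizontal positions of points within each column and the vertical positions of points within each row. Both of these counts are manifestly insensitive to sign, which explains why the formula of Lemma~\ref{lemmaCountGriddings} depends on $G(M)$ alone. There is no substantive obstacle to overcome: the corollary is a one-line deduction, and the only thing to be careful about is pointing out that the formula's $M$-dependence factors entirely through $G(M)$.
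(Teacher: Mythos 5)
Your proposal is correct and matches the paper's approach exactly: the paper states this corollary as an immediate consequence of Lemma~\ref{lemmaCountGriddings}, which is precisely what you argue. Your extra elaboration on why the signs drop out is accurate but inessential—the formula of Lemma~\ref{lemmaCountGriddings} already manifestly depends only on $G(M)$, so the one-line deduction stands on its own.
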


\subsection{Gridded permutations and tours}\label{sectPermsAndTours}
We now
use Lemmas~\ref{lemmaLowerBound} and~\ref{lemmaUpperBoundNew} to
relate the number of gridded permutations of length $k$ in $\Gridhash(M)$ to the number of tours of length $2k$ on $G(M)$. We restrict ourselves to permutation classes with connected row-column graphs.

\begin{lemma}\label{lemmaGRsEqual}
If $G(M)$ is connected, the growth rate of $\Gridhash(M)$ exists and is equal to the growth rate of $\WWWE_{G(M)}$.
\end{lemma}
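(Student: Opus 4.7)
The plan is to sandwich $|\Gridhash_k(M)|$ between two tour-counting quantities that both grow at rate $\gr(\WWWE_{G(M)})$. Write $G = G(M)$ with $m$ edges and $n$ vertices, fix any vertex $u \in V(G)$, and recall from Lemma~\ref{lemmaGREven} that $\gr(\WWWE_G) = \rho(G)^2$ exists.

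For $\grup(\Gridhash(M)) \leqslant \gr(\WWWE_G)$, I would substitute $k_i + 1$ for $k_i$ throughout Lemma~\ref{lemmaLowerBound} so that the positivity hypothesis is automatic, yielding
\[
\prod_{v\in V(G)} \binom{k^v_1 + \ldots + k^v_{\dg(v)}}{k^v_1,\, \ldots,\, k^v_{\dg(v)}} \;\leqslant\; \WB_G(k_1+1, \ldots, k_m+1; u).
\]
Summing over $(k_i)$ with $\sum k_i = k$, applying Lemma~\ref{lemmaCountGriddings} on the left, and using $\WB_G((k'_i); u) \leqslant W_G((2k'_i); u)$ on the right, I obtain $|\Gridhash_k(M)| \leqslant W_G(2(k+m); u) \leqslant W_G(2(k+m))$. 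Taking $k$-th roots, since $W_G(2n)^{1/n}\to\gr(\WWWE_G)$ and $(k+m)/k\to 1$, this gives $\grup(\Gridhash(M)) \leqslant \gr(\WWWE_G)$.

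For $\grlow(\Gridhash(M)) \geqslant \gr(\WWWE_G)$, I would sum the inequality of Lemma~\ref{lemmaUpperBoundNew} over all starting vertices and identify the resulting multinomial sum with $|\Gridhash_j(M)|$ using Lemma~\ref{lemmaCountGriddings}, obtaining
\[
W_G(2k) \;\leqslant\; n\,(m+1)^m\,(2k+2m)^m \sum_{j=k}^{k+m^2} |\Gridhash_j(M)|.
\]
To collapse the windowed sum, I would first note that $|\Gridhash_k(M)|$ is non-decreasing in $k$: fix any non-zero cell of $M$ and insert a new point into it at a chosen extremal position, giving an injection $\Gridhash_k(M) \hookrightarrow \Gridhash_{k+1}(M)$. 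Hence the sum is at most $(m^2+1)\,|\Gridhash_{k+m^2}(M)|$. Taking $k$-th roots and noting that both the polynomial prefactor and the exponent shift $(k+m^2)/k$ contribute factors tending to $1$, while $W_G(2k)^{1/k}\to\gr(\WWWE_G)$, yields $\gr(\WWWE_G) \leqslant \grlow(\Gridhash(M))$.

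Combining the two inequalities forces $\grlow(\Gridhash(M)) = \grup(\Gridhash(M)) = \gr(\WWWE_{G(M)})$, so the growth rate exists and has the claimed value. The only mildly delicate step is verifying monotonicity of $|\Gridhash_k(M)|$; everything else is routine bookkeeping to absorb the $O(m)$ shifts in index and the polynomial prefactors when passing to $k$-th roots.
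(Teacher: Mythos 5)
Your proof is correct and follows essentially the same route as the paper's: the upper-bound direction combines Lemma~\ref{lemmaCountGriddings} with Lemma~\ref{lemmaLowerBound} to get $|\Gridhash_k(M)|\leqslant W_{G}(2k+2m)$, and the lower-bound direction sums Lemma~\ref{lemmaUpperBoundNew} over starting vertices and identifies the inner multinomial sum with $|\Gridhash_j(M)|$. The one substantive addition is your explicit monotonicity argument, bounding the windowed sum $\sum_{j=k}^{k+m^2}|\Gridhash_j(M)|$ by $(m^2+1)\,|\Gridhash_{k+m^2}(M)|$ via the injection $\Gridhash_k(M)\hookrightarrow\Gridhash_{k+1}(M)$ given by inserting an extremal point into a fixed non-zero cell. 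The paper instead appeals to Observation~\ref{obsGRPolySum}; but that observation is stated under the hypothesis that the collection already has a growth rate, which is exactly what is being proved for $\Gridhash(M)$, so your monotonicity step is a clean way to make the $\liminf$ side of the sandwich unconditional rather than circular. The rest is sound: the families $\WWW_G(2k_1+2,\ldots,2k_m+2;u)$ indexed by compositions $(k_i)$ of $k$ are pairwise disjoint subsets of the tours of length $2k+2m$ from $u$, which justifies the collapse to $W_G(2k+2m;u)$, and the index shifts by $O(m^2)$ contribute exponents tending to $1$.
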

\begin{proof}
If matrix $M$ has $m$ non-zero entries (and thus $G(M)$ has $m$ edges), then
for any vertex $u$ of $G(M)$,
combining
Lemmas~\ref{lemmaCountGriddings} and~\ref{lemmaLowerBound},
gives us
\begin{align}\label{eqLB}
|\Gridhash_k(M)| &
\;\leqslant\;
\displaystyle
\sum_{k_1+\ldots+k_m\,=\,k}
\WB_{G(M)}(k_1+1,k_2+1,\ldots,k_m+1;u) \nonumber\\[4pt]
& \;\leqslant\;
\displaystyle
\sum_{k_1+\ldots+k_m\,=\,k}
W_{G(M)}(2k_1+2,2k_2+2,\ldots,2k_m+2;u) \nonumber\\[4pt]
& \;\leqslant\;
W_{G(M)}(2k+2m).
\end{align}


On the other hand, from Lemma~\ref{lemmaUpperBoundNew}, for any vertex $u$ of a graph $G$ with $m$ edges, 
$$
W_G(2k;u)
\;\leqslant\;
(m+1)^m\+
(2k+2m)^m
\displaystyle\sum_{j=k}^{k+m^2}
\sum\limits_{k_1+\ldots+k_m\,=\,j}
\+\+
\prod\limits_{v\in V(G)}\dbinom{{k^v_1}+
\ldots+{k^v_{\dg(v)}}}
{{k^v_1},\,
\ldots,\,{k^v_{\dg(v)}}}.
$$

Let $W_G(h)$ be the number of tours of length $h$ on
$G$ (starting at any vertex).

Now $W_G(h)=\sum\limits_{u\in V(G))}\!\!W_G(h;u)$, so, using Lemma~\ref{lemmaCountGriddings}, if $G(M)$ has $n$ vertices and $m$ edges, we have
$$
W_{G(M)}(2k)
\;\leqslant\;
n\+
(m+1)^m\+
(2k+2m)^m
\displaystyle\sum_{j=k}^{k+m^2}
|\Gridhash_j(M)|.
$$
The multiplier on the right side of this inequality is a polynomial in $k$.
Hence, using inequality~\eqref{eqLB} and Observation~\ref{obsGRPolySum}, we can conclude that
\[
\gr(\Gridhash(M))\;=\;\gr(\WWWE_{G(M)})
\]
if $G(M)$ is connected.
\end{proof}

\subsection{Counting permutations}\label{sectCountingPerms}
We nearly have the result we want.
The final link is
the following lemma of Vatter which tells us that, as far as growth rates are concerned,
classes of gridded permutations are indistinguishable from their grid classes.
\begin{lemma}[Vatter~\cite{Vatter2011} Proposition~2.1]\label{lemmaGRGriddings}
The growth rate of a monotone grid class $\Grid(M)$ exists and is equal to the growth rate of the corresponding class of gridded permutations $\Gridhash(M)$.
\end{lemma}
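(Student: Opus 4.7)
The plan is to sandwich $|\Grid_k(M)|$ between $|\Gridhash_k(M)|$ and a polynomial multiple of it, and then pass to $k$-th roots. Existence of $\gr(\Grid(M))$ will then come for free, since all earlier results (via Lemmas~\ref{lemmaGRsEqual} and~\ref{lemmaGREven}) already establish that $\gr(\Gridhash(M))$ exists.

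For the easy direction, every permutation in $\Grid(M)$ admits at least one $M$-gridding, so the map $\Gridhash_k(M) \to \Grid_k(M)$ that forgets the gridding is surjective, giving
\[
|\Grid_k(M)| \;\leqslant\; |\Gridhash_k(M)|.
\]
For the other direction, the key observation is that a single permutation of length $k$ has at most polynomially many distinct $M$-griddings. Indeed, if $M$ has $t$ columns and $u$ rows, an $M$-gridding is specified by the $t-1$ non-trivial column dividers $c_1,\ldots,c_{t-1}$ and the $u-1$ non-trivial row dividers $r_1,\ldots,r_{u-1}$, each drawn from $\{\tfrac12,\tfrac32,\ldots,k+\tfrac12\}$. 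Hence each permutation carries at most $(k+1)^{t+u-2}$ griddings, which gives
\[
|\Gridhash_k(M)| \;\leqslant\; (k+1)^{t+u-2}\,|\Grid_k(M)|.
\]

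Combining the two inequalities and taking $k$-th roots,
\[
|\Grid_k(M)|^{1/k} \;\leqslant\; |\Gridhash_k(M)|^{1/k} \;\leqslant\; (k+1)^{(t+u-2)/k}\,|\Grid_k(M)|^{1/k},
\]
and the polynomial factor $(k+1)^{(t+u-2)/k}$ tends to $1$ as $k\to\infty$. Since $\gr(\Gridhash(M))$ exists as a genuine limit (by Lemmas~\ref{lemmaGRsEqual} and~\ref{lemmaGREven}, in the connected case, with the disconnected case reducing to a product of connected components), both $\liminf$ and $\limsup$ of $|\Grid_k(M)|^{1/k}$ must agree with $\gr(\Gridhash(M))$. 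Therefore $\gr(\Grid(M))$ exists and equals $\gr(\Gridhash(M))$.

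There is no real obstacle here; the entire content is the polynomial upper bound on the number of griddings of a fixed permutation, which is essentially a counting observation about the divider sequences. The only mild care needed is to handle the case where $G(M)$ is disconnected, which can be dispatched by noting that a grid class whose row-column graph splits into components decomposes as a ``shuffle'' of the grid classes of the components, so that growth rates multiply in an easily controlled way and the existence claim reduces to the connected case already covered by the earlier results.
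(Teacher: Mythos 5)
Your proof is essentially the same as the paper's: both hinge on the observation that every permutation in $\Grid(M)$ has at least one and at most polynomially many griddings (you bound the count by $(k+1)^{t+u-2}$ where the paper uses the sharper $\binom{k+r-1}{r-1}\binom{k+s-1}{s-1}$, but either polynomial suffices), and both then pass to $k$-th roots. One small correction to your closing aside: when $G(M)$ is disconnected the growth rate of $\Grid(M)$ is the \emph{maximum} of the component growth rates (Vatter's Proposition~2.10, cited in the proof of Theorem~\ref{thmGrowthRate}), not their product, since the components sit in disjoint blocks of rows and columns and hence combine by direct/skew sum rather than by merging.
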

\begin{proof}
Suppose that $M$ has dimensions $r\times s$.
Every permutation in $\Grid(M)$ has at least one gridding in $\Gridhash(M)$, but no permutation in $\Grid(M)$ of length $k$ can have more than $P(k)=\binom{k+r-1}{r-1}\binom{k+s-1}{s-1}$ griddings in $\Gridhash(M)$ because $P(k)$ is the number of possible choices for the
row and column dividers (see Subsection~\ref{sectGridClassDefs}).
Since $P(k)$ is a polynomial
in $k$, the result follows from Observation~\ref{obsGRPolySum}.
\end{proof}

Thus, by Corollary~\ref{corGRHashEqForSameRCGraph}:
\begin{cor}\label{corGREqForSameRCGraph}
Monotone grid classes with the same row-column graph have the same growth rate.
\end{cor}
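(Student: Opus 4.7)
The plan is to observe that this corollary is essentially immediate by chaining together the two results that have just been established, namely Corollary~\ref{corGRHashEqForSameRCGraph} and Lemma~\ref{lemmaGRGriddings}. There is no further combinatorial content required; I would present it simply as a two-step deduction.

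First, suppose $M$ and $M'$ are two $0/\!\pm\!1$ matrices with $G(M)=G(M')$. By Corollary~\ref{corGRHashEqForSameRCGraph}, we have $|\Gridhash_k(M)|=|\Gridhash_k(M')|$ for every $k$, so certainly the classes $\Gridhash(M)$ and $\Gridhash(M')$ have the same sequence of coefficients, and in particular their growth rates agree (and exist jointly or not at all). This step is a routine appeal to the explicit formula of Lemma~\ref{lemmaCountGriddings}, which depends on $M$ only through its row-column graph.

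Second, I would invoke Vatter's Lemma~\ref{lemmaGRGriddings} twice: once to conclude $\gr(\Grid(M))=\gr(\Gridhash(M))$ and once to conclude $\gr(\Grid(M'))=\gr(\Gridhash(M'))$. Combining the two equalities with the equality from the previous step gives $\gr(\Grid(M))=\gr(\Grid(M'))$, which is the desired conclusion. There is no serious obstacle in this argument; the genuine work has already been done in the proofs of Lemmas~\ref{lemmaCountGriddings} and~\ref{lemmaGRGriddings}, and the corollary is simply the statement that these two facts interact cleanly. The only thing to take a little care over is to confirm that the growth rate truly \emph{exists} for both classes, but this is built into the statement of Lemma~\ref{lemmaGRGriddings}, so nothing further needs to be said.
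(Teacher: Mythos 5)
Your argument is correct and is exactly the paper's: the corollary is deduced by combining Corollary~\ref{corGRHashEqForSameRCGraph} (equal gridded-permutation counts) with Lemma~\ref{lemmaGRGriddings} (growth rate of $\Grid(M)$ equals that of $\Gridhash(M)$). Nothing further is needed.
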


\subsection{The growth rate of grid classes}
\label{sectProofOfTheorem}
We now have all we need for the proof of our main theorem.
\begin{thm}\label{thmGrowthRate}
The growth rate of a monotone grid class of permutations exists and is equal to the square of the spectral radius of its row-column graph.
\end{thm}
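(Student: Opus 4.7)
The plan is to chain together the three preceding lemmas, with an extra decomposition argument to handle grid classes whose row-column graph is disconnected.

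First, by Vatter's Lemma~\ref{lemmaGRGriddings}, the growth rate of $\Grid(M)$ exists and equals $\gr(\Gridhash(M))$, so it suffices to determine the growth rate of the gridded class. When $G(M)$ is connected, the result is immediate: Lemma~\ref{lemmaGRsEqual} gives $\gr(\Gridhash(M)) = \gr(\WWWE_{G(M)})$, and Lemma~\ref{lemmaGREven} identifies this with $\rho(G(M))^2$. Composing the three equalities yields $\gr(\Grid(M)) = \rho(G(M))^2$ in the connected case.

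For the disconnected case, suppose $G(M)$ has components $G_1, \ldots, G_p$. These components correspond to disjoint sets of rows and columns of $M$ (edges in different components cannot share a row or column vertex), so after reindexing $M$ (which preserves the row-column graph, hence by Corollary~\ref{corGREqForSameRCGraph} the growth rate) we may take $M$ to be block-decomposed into submatrices $M_1, \ldots, M_p$ whose row-column graphs are precisely the $G_i$. Because each vertex of $G(M)$ belongs to exactly one component, the product over vertices in the explicit formula of Lemma~\ref{lemmaCountGriddings} factors over components, and summing over compositions of $k$ yields the convolution
$$
|\Gridhash_k(M)| \;=\; \sum_{k_1+\ldots+k_p=k}\,\prod_{i=1}^p |\Gridhash_{k_i}(M_i)|.
$$
A standard argument (lower bound by setting all but one $k_i$ to zero; upper bound by crudely bounding each factor by its maximum and multiplying by the polynomial number of compositions) then gives $\gr(\Gridhash(M)) = \max_i \gr(\Gridhash(M_i))$. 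Each $G_i$ is connected, so by the connected case $\gr(\Gridhash(M_i)) = \rho(G_i)^2$. Finally, the adjacency matrix of $G(M)$ is block diagonal with blocks $A(G_i)$, so $\rho(G(M)) = \max_i \rho(G_i)$, and we conclude $\gr(\Grid(M)) = \max_i \rho(G_i)^2 = \rho(G(M))^2$.

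The main obstacle is the disconnected case, which relies on two observations: that the counting formula of Lemma~\ref{lemmaCountGriddings} factors along connected components, and that the convolution-to-max step survives subexponential polynomial factors (covered by Observation~\ref{obsGRPolySum}). Neither step is deep, but each has to be stated carefully to make sure that the reduction to the connected lemmas is clean.
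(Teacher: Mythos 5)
Your proof of the connected case is identical to the paper's: compose Lemmas~\ref{lemmaGRGriddings}, \ref{lemmaGRsEqual}, and~\ref{lemmaGREven}. For the disconnected case you take a genuinely different and more self-contained route. The paper simply cites Vatter's Proposition~2.10 for the fact that the growth rate of a disconnected grid class is the maximum over its components, and a standard spectral fact for the analogous statement about $\rho$. You instead derive the component decomposition directly from the explicit count of Lemma~\ref{lemmaCountGriddings}: since vertices and edges partition across components, the product over $V(G)$ factors, and regrouping the sum over compositions of $k$ gives the convolution $|\Gridhash_k(M)| = \sum_{k_1+\ldots+k_p=k}\prod_i|\Gridhash_{k_i}(M_i)|$, from which the max emerges. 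This keeps the argument internal to the paper's machinery at the cost of a little extra work, and both are correct. One place to tighten: in the upper bound, ``crudely bounding each factor by its maximum'' is dangerous as phrased---if you literally replace each $|\Gridhash_{k_i}(M_i)|$ by $\max_{j\leqslant k}|\Gridhash_j(M_i)|$ the product can be of order $C^{kp}$ rather than $C^k$. The clean version uses that each component already has a finite growth rate, so for any $\varepsilon>0$ there is $D_\varepsilon$ with $|\Gridhash_{k_i}(M_i)|\leqslant D_\varepsilon(\rho_{\max}^2+\varepsilon)^{k_i}$; then the product telescopes to $D_\varepsilon^p(\rho_{\max}^2+\varepsilon)^k$, the polynomial number of compositions is absorbed by Observation~\ref{obsGRPolySum}, and $\varepsilon\to 0$ finishes. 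With that adjustment the argument is sound.
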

\begin{proof}
For connected grid classes, the result follows immediately from Lemmas~\ref{lemmaGREven},~\ref{lemmaGRsEqual} and~\ref{lemmaGRGriddings}.
A little more work is required to handle the disconnected case.

If $G(M)$ is disconnected,
then the growth rate of $\Grid(M)$ is the maximum of the growth rates of the grid classes corresponding to the connected components of $G(M)$
(see Proposition 2.10 in Vatter~\cite{Vatter2011}).

Similarly, the spectrum of a disconnected graph is the union (with multiplicities) of the spectra of the graph's components (see Theorem 2.1.1 in Cvetkovi\'c, Rowlinson and Simi\'c~\cite{CRS2010}). Thus the spectral radius of a disconnected graph is the maximum of the spectral radii of its components.

Combining these facts with
Lemmas~\ref{lemmaGREven},~\ref{lemmaGRsEqual} and~\ref{lemmaGRGriddings}
yields
$$ 
\gr(\Grid(M))\;=\;\rho(G(M))^2
$$ 
as required.
\end{proof}

\section{Implications}\label{sectConsequences}
As a consequence of Theorem~\ref{thmGrowthRate}, results concerning the spectral radius of graphs can be translated
into facts about the growth rates of permutation grid classes.
So we now present a number of corollaries
that follow from spectral graph theoretic considerations.
The two recent monographs by Cvetkovi\'c, Rowlinson and Simi\'c~\cite{CRS2010}
and Brouwer and Haemers~\cite{BH2012}
provide a valuable overview of spectral graph theory, so, where appropriate, we cite the relevant sections of these (along with the original reference for a result).

As a result of
Corollary~\ref{corGREqForSameRCGraph},
changing the sign of non-zero entries in matrix $M$ has no effect on the growth rate of $\Grid(M)$.
For this reason, when considering particular collections of grid classes below, we choose to represent them by \textbf{grid diagrams} in which non-zero matrix entries are represented by a $\!\gxone{1}{1}\!$. As with grid classes, we freely apportion properties of a row-column graph to corresponding grid diagrams.

\begin{figure}[ht]
    $$
    \gxthree{3}{1,1,1}{1,1,0}{1,0,0}
    \quad
    \gxthree{3}{1,1,1}{1,1,0}{0,1,0}
    \quad
    \gxthree{3}{1,1,1}{1,0,1}{1,0,0}
    \quad
    \gxthree{3}{1,1,1}{1,0,0}{1,0,1}
    \quad
    \gxthree{3}{1,1,1}{0,1,0}{1,1,0}
    \quad
    \gxthree{3}{1,1,0}{1,1,1}{0,1,0}
    \quad\quad\quad
    \raisebox{-.07in}{\begin{tikzpicture}[scale=0.4]
      \draw [thick] (2,1)--(0,1)--(0,0)--(2,0);
      \draw [thick] (1,0)--(1,1);
      \draw [fill] (0,0) circle [radius=0.15];
      \draw [fill] (1,0) circle [radius=0.15];
      \draw [fill] (2,0) circle [radius=0.15];
      \draw [fill] (0,1) circle [radius=0.15];
      \draw [fill] (1,1) circle [radius=0.15];
      \draw [fill] (2,1) circle [radius=0.15];
    \end{tikzpicture}}
    $$
\caption{Some unicyclic grid diagrams that have the same row-column graph}\label{figGridDiags}
\end{figure}
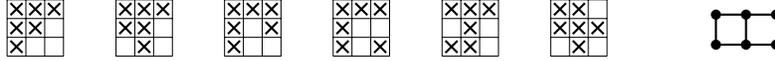
Since transposing a matrix or
permuting its rows and columns does not change the row-column graph of its grid class, there may be a number of distinct grid diagrams
corresponding to a specific row-column graph (see Figure~\ref{figGridDiags} for an example).

In many cases, we illustrate a result by showing a row-column graph and a corresponding grid diagram.
We display just one of the possible grid diagrams corresponding to the row-column graph.

Our first result is the following elementary observation, which
specifies a limitation on which numbers can be grid class growth rates.
This is a consequence of the fact that the spectral radius of a graph is
a root of the characteristic polynomial of an integer matrix.
\begin{cor}\label{corAlgebraicInteger}
The growth rate of a monotone grid class is an algebraic integer (the root of a monic polynomial).
\end{cor}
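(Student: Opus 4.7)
The plan is to combine Theorem~\ref{thmGrowthRate} with the elementary fact that the spectral radius of a graph is an algebraic integer, and then observe that squaring preserves this property.

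First, I would invoke Theorem~\ref{thmGrowthRate} to reduce the claim to showing that $\rho(G(M))^2$ is an algebraic integer, where $G(M)$ is the row-column graph of $\Grid(M)$. Then I would note that the adjacency matrix $A = A(G(M))$ is a $0/1$ matrix, hence an integer matrix, so its characteristic polynomial $\chi_A(x) = \det(xI - A)$ is a monic polynomial with integer coefficients. Since $\rho(G(M))$ is by definition an eigenvalue of $A$, it is a root of $\chi_A$, and therefore an algebraic integer.

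The remaining step is to pass from $\rho$ being an algebraic integer to $\rho^2$ being one. The cleanest route is to observe that $A^2$ is again an integer matrix, so its characteristic polynomial $\chi_{A^2}(x)$ is monic with integer coefficients; since the eigenvalues of $A^2$ are exactly the squares of the eigenvalues of $A$, the number $\rho^2$ is a root of $\chi_{A^2}$. Alternatively, one can cite the general fact that algebraic integers form a ring, so any polynomial in $\rho$ with integer coefficients is again an algebraic integer. Either formulation gives $\gr(\Grid(M)) = \rho(G(M))^2 \in \overline{\mathbb{Z}}$.

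There is no genuine obstacle here; the only thing to be mildly careful about is that Theorem~\ref{thmGrowthRate} is stated for arbitrary (possibly disconnected) $M$, but since the spectrum of a disconnected graph is the union (with multiplicities) of the spectra of its components, the spectral radius is still an eigenvalue of an integer adjacency matrix, so the argument goes through unchanged.
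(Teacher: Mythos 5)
Your proof is correct and takes essentially the same approach as the paper, which offers no formal proof beyond the one-line remark that the corollary ``is a consequence of the fact that the spectral radius of a graph is a root of the characteristic polynomial of an integer matrix.'' You usefully make explicit the small step the paper elides, namely passing from $\rho$ being an algebraic integer to $\rho^2$ being one (via the characteristic polynomial of the integer matrix $A^2$, or the ring property of algebraic integers), and you correctly note that the disconnected case requires no extra work.
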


\subsection{Slowly growing grid classes}\label{sectSmallGridClasses}
Using results concerning graphs with small spectral radius, we can characterise grid classes with
growth rates no greater
than~$\frac{9}{2}$.
This is
similar to Vatter's characterisation of ``small'' permutation classes (with growth rate less than $\kappa\approx 2.20557$) in~\cite{Vatter2011}.

First, we recall that the growth rate of a disconnected grid class is the maximum of the growth rates of its components 
(see the proof of Theorem~\ref{thmGrowthRate}),
so we only need to consider connected grid classes.

\begin{figure}[ht]
\begin{center}
\renewcommand{\arraystretch}{2.1}
\begin{tabular}{c@{$\qquad$}c@{$\qquad$}c@{$\qquad$}c@{$\qquad$}c}
  \raisebox{0.02in}{\begin{tikzpicture}[scale=0.4]
    \draw [thick] (-.5,.866)--(0,0)--(-.5,-.866)--(-1.5,-.866)--(-2,0)--(-1.5,.866)--(-.5,.866);
    \draw [fill] (0,0) circle [radius=0.15];
    \draw [fill] (-.5,.866) circle [radius=0.15];
    \draw [fill] (-.5,-.866) circle [radius=0.15];
    \draw [fill] (-1.5,.866) circle [radius=0.15];
    \draw [fill] (-1.5,-.866) circle [radius=0.15];
    \draw [fill] (-2,0) circle [radius=0.15];
  \end{tikzpicture}}
&
  \begin{tikzpicture}[scale=0.4]
    \draw [thick] (0,0)--(6,0);
    \draw [thick] (0,1)--(0,0)--(0,-1);
    \draw [thick] (6.0,1)--(6,0)--(6.0,-1);
    \foreach \x in {0,...,6}
      \draw [fill] (\x,0) circle [radius=0.15];
    \draw [fill] (0,1) circle [radius=0.15];
    \draw [fill] (0,-1) circle [radius=0.15];
    \draw [fill] (6,1) circle [radius=0.15];
    \draw [fill] (6,-1) circle [radius=0.15];
  \end{tikzpicture}
&
  \begin{tikzpicture}[scale=0.4]
    \draw [thick] (0,0)--(4,0);
    \draw [thick] (2,0)--(2,2);
    \foreach \x in {0,...,4}
      \draw [fill] (\x,0) circle [radius=0.15];
    \draw [fill] (2,1) circle [radius=0.15];
    \draw [fill] (2,2) circle [radius=0.15];
  \end{tikzpicture}
&
  \raisebox{0.08in}{\begin{tikzpicture}[scale=0.4]
    \draw [thick] (0,0)--(6,0);
    \draw [thick] (3,0)--(3,1);
    \foreach \x in {0,...,6}
      \draw [fill] (\x,0) circle [radius=0.15];
    \draw [fill] (3,1) circle [radius=0.15];
  \end{tikzpicture}}
&
  \raisebox{0.08in}{\begin{tikzpicture}[scale=0.4]
    \draw [thick] (0,0)--(7,0);
    \draw [thick] (5,0)--(5,1);
    \foreach \x in {0,...,7}
      \draw [fill] (\x,0) circle [radius=0.15];
    \draw [fill] (5,1) circle [radius=0.15];
  \end{tikzpicture}}
\\
  \gxthree{3}{0,1,1}{1,0,1}{1,1,0}
&
  \gxfour{7}{0,0,0,0,1,1,1}{0,0,0,1,1,0,0}{0,0,1,1,0,0,0}{1,1,1,0,0,0,0}
&
  \gxthree{4}{1,0,0,1}{1,0,1,0}{1,1,0,0}
&
  \gxthree{5}{0,0,0,1,1}{0,1,1,1,0}{1,1,0,0,0}
&
  \gxfour{5}{0,0,0,0,1}{0,0,1,1,1}{0,1,1,0,0}{1,1,0,0,0}
\end{tabular}
\vspace{-6pt}
\end{center}
\caption{
A cycle, an $H$ graph and the three
other Smith graphs,
with corresponding grid diagrams
}\label{figSmith}
\end{figure}
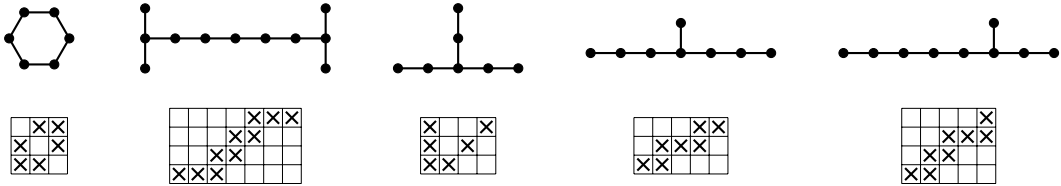
The connected graphs with spectral radius 2 are known as the \textbf{Smith} graphs. These are precisely the \emph{cycle} graphs, the $H$ graphs (paths with two pendant edges
attached to both endvertices, including the star graph $K_{1,4}$), and the three other graphs shown in Figure~\ref{figSmith}.

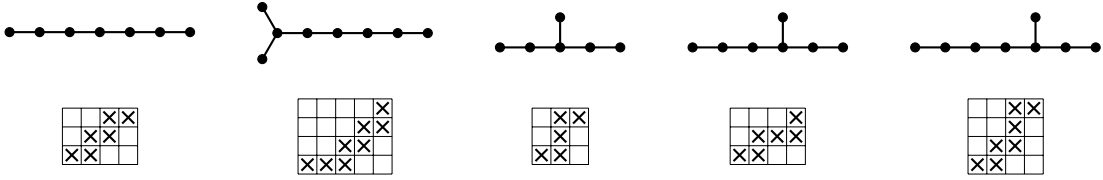
\begin{figure}[ht]
\begin{center}
\renewcommand{\arraystretch}{2.1}
\begin{tabular}{c@{$\qquad$}c@{$\qquad$}c@{$\qquad$}c@{$\qquad$}c}
  \raisebox{0.14in}{\begin{tikzpicture}[scale=0.4]
    \draw [thick] (0,0)--(6,0);
    \foreach \x in {0,...,6}
      \draw [fill] (\x,0) circle [radius=0.15];
  \end{tikzpicture}}
&
  \begin{tikzpicture}[scale=0.4]
    \draw [thick] (0,0)--(5,0);
    \draw [thick] (-.5,.866)--(0,0)--(-.5,-.866);
    \foreach \x in {0,...,5}
      \draw [fill] (\x,0) circle [radius=0.15];
    \draw [fill] (-.5,.866) circle [radius=0.15];
    \draw [fill] (-.5,-.866) circle [radius=0.15];
  \end{tikzpicture}
&
  \raisebox{0.06in}{\begin{tikzpicture}[scale=0.4]
    \draw [thick] (0,0)--(4,0);
    \draw [thick] (2,0)--(2,1);
    \foreach \x in {0,...,4}
      \draw [fill] (\x,0) circle [radius=0.15];
    \draw [fill] (2,1) circle [radius=0.15];
  \end{tikzpicture}}
&
  \raisebox{0.06in}{\begin{tikzpicture}[scale=0.4]
    \draw [thick] (0,0)--(5,0);
    \draw [thick] (3,0)--(3,1);
    \foreach \x in {0,...,5}
      \draw [fill] (\x,0) circle [radius=0.15];
    \draw [fill] (3,1) circle [radius=0.15];
  \end{tikzpicture}}
&
  \raisebox{0.06in}{\begin{tikzpicture}[scale=0.4]
    \draw [thick] (0,0)--(6,0);
    \draw [thick] (4,0)--(4,1);
    \foreach \x in {0,...,6}
      \draw [fill] (\x,0) circle [radius=0.15];
    \draw [fill] (4,1) circle [radius=0.15];
  \end{tikzpicture}}
\\
  \gxthree{4}{0,0,1,1}{0,1,1,0}{1,1,0,0}
&
  \gxfour{5}{0,0,0,0,1}{0,0,0,1,1}{0,0,1,1,0}{1,1,1,0,0}
&
  \gxthree{3}{0,1,1}{0,1,0}{1,1,0}
&
  \gxthree{4}{0,0,0,1}{0,1,1,1}{1,1,0,0}
&
  \gxfour{4}{0,0,1,1}{0,0,1,0}{0,1,1,0}{1,1,0,0}
\end{tabular}
\vspace{-6pt}
\end{center}
\caption{
A path, a $Y$ graph and the three
other connected proper subgraphs of Smith graphs,
with corresponding grid diagrams
}\label{figSubSmith}
\end{figure}
Similarly, the connected proper
subgraphs of the Smith graphs are precisely the \emph{path} graphs, the $Y$ graphs (paths with two pendant edges attached to one endvertex)
and the three other graphs in Figure~\ref{figSubSmith}.
For details, see
Smith~\cite{Smith1970} and Lemmens and Seidel~\cite{LS1973}; 
also see~\cite{CRS2010} Theorem 3.11.1 and~\cite{BH2012} Theorem 3.1.3.

With these, we can characterise all grid classes with growth rate no greater than 4:
\begin{cor}\label{corSmith}
If the growth rate of a connected monotone grid class equals 4, then its row-column graph is a Smith graph.
If the growth rate of a connected monotone grid class is less than 4, then its row-column graph is a connected proper
subgraph of a Smith graph.
\end{cor}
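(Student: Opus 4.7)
The plan is to translate the statement into spectral graph theory via Theorem~\ref{thmGrowthRate} and then invoke the classical classification of graphs with spectral radius at most $2$. Concretely, if $\CCC=\Grid(M)$ is a connected monotone grid class with row-column graph $G=G(M)$, then Theorem~\ref{thmGrowthRate} gives $\gr(\CCC)=\rho(G)^2$. Hence $\gr(\CCC)=4$ is equivalent to $\rho(G)=2$, and $\gr(\CCC)<4$ is equivalent to $\rho(G)<2$.

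For the first half of the corollary, I would appeal directly to Smith's theorem (see Smith~\cite{Smith1970}, and also~\cite{CRS2010} Theorem~3.11.1 or~\cite{BH2012} Theorem~3.1.3), which states that the connected graphs with spectral radius exactly $2$ are precisely the Smith graphs (the cycles, the $H$-graphs including $K_{1,4}$, and the three exceptional graphs shown in Figure~\ref{figSmith}). Since $\rho(G)=2$, $G$ must be one of these, which is exactly what we need.

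For the second half, I would use the companion classification (see Lemmens and Seidel~\cite{LS1973}, and the same sections of~\cite{CRS2010,BH2012}) which lists the connected graphs with spectral radius strictly less than $2$: these are exactly the connected proper subgraphs of Smith graphs, namely the paths, the $Y$-graphs, and the three exceptional graphs in Figure~\ref{figSubSmith}. Since $\rho(G)<2$, $G$ must be among these.

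There is really no obstacle here: the entire content of the corollary is the combination of Theorem~\ref{thmGrowthRate} with a well-known classification. The only minor point worth noting explicitly is that the row-column graph $G(M)$ is always bipartite (by definition), which is consistent with the cited classifications (all Smith graphs and their connected proper subgraphs that appear as row-column graphs of grid classes are indeed bipartite; in particular, any odd cycle is excluded, but among Smith graphs, cycles are bipartite iff even, so one should note that $G(M)$ being bipartite forces any cycle arising here to have even length — this does not affect the statement, since the corollary only claims that $G(M)$ is \emph{a} Smith graph or subgraph).
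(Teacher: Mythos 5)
Your proof is correct and follows precisely the route the paper intends: combine Theorem~\ref{thmGrowthRate} (growth rate equals squared spectral radius of the row-column graph) with Smith's classification of connected graphs having $\rho\leqslant 2$, as cited in the paper. The bipartiteness remark is a nice sanity check, though unnecessary for the statement.
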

In particular, we have the following:
\begin{cor}\label{corCycle}
A monotone grid class
of any size
whose row-column graph is a cycle
or
an H graph
has growth rate
4.
\end{cor}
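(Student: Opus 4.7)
The plan is to invoke Theorem~\ref{thmGrowthRate} and reduce the statement to the classical computation of the spectral radius of cycles and H graphs, both of which appear in the Smith classification recalled immediately before Corollary~\ref{corSmith}.

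By Theorem~\ref{thmGrowthRate}, the growth rate of $\Grid(M)$ equals $\rho(G(M))^2$. Thus it suffices to verify that $\rho(G(M)) = 2$ whenever $G(M)$ is a cycle or an H graph, with no restriction on the number of vertices. Both families are displayed in Figure~\ref{figSmith} as Smith graphs, so this is immediate from the definition of a Smith graph as a connected graph of spectral radius~$2$.

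If one wishes to see the cycle case directly without appealing to the Smith classification, the eigenvalues of the adjacency matrix of $C_n$ are $2\cos(2\pi j/n)$ for $j = 0,1,\ldots,n-1$, whose maximum is $2$, attained at $j=0$. For the H graphs (paths with two pendant edges at each endvertex, including $K_{1,4}$), the spectral radius can be checked by exhibiting a positive eigenvector of eigenvalue $2$: on $K_{1,4}$ this is $(2,1,1,1,1)$; on a longer H graph one extends this by assigning values along the central path that satisfy the three-term recurrence $x_{i+1} = 2x_i - x_{i-1}$ with initial data matching the pendants, giving a positive eigenvector of eigenvalue $2$, which by Perron--Frobenius must be the spectral radius.

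In either case, the result $\gr(\Grid(M)) = \rho(G(M))^2 = 4$ follows immediately. There is no substantive obstacle: all of the work has already been done in Theorem~\ref{thmGrowthRate} and in the spectral graph theoretic classification of Smith graphs; the corollary is essentially an illustrative specialisation, with the phrase ``of any size'' emphasising that the spectral radius of a Smith graph does not depend on the number of vertices.
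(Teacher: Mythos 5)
Your proposal is correct and takes essentially the same approach as the paper: the paper also deduces this corollary directly from Theorem~\ref{thmGrowthRate} together with the classification of Smith graphs (connected graphs of spectral radius~$2$), which it records just before Corollary~\ref{corSmith}. The additional direct verifications you supply (the explicit spectrum of $C_n$, and the positive eigenvector of eigenvalue~$2$ on the $H$ graphs, which in fact is constant on the central path and half that value on the pendants) are a nice sanity check but not a departure from the paper's route.
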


In Appendix~A of~\cite{Vatter2011}, Vatter considers \textbf{staircase} grid classes, whose row-column graphs are paths (see the leftmost grid diagram in Figure~\ref{figSubSmith}).
The 
spectral radius of a path graph has long been known (Lov\'asz and Pelik\'an~\cite{LP1973}; also see~\cite{CRS2010} Theorem 8.1.17 and~\cite{BH2012} 1.4.4), from which we can conclude:
\begin{cor}\label{corPath}
A monotone grid class
of size $m$
(having $m$ non-zero cells)
whose row-column graph is a path
has growth rate $4\cos^2\!\left(\frac{\pi}{m+2}\right)$.
This is minimal for any connected grid class of size $m$.
\end{cor}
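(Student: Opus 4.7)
My plan is to invoke Theorem~\ref{thmGrowthRate}, which reduces the problem to computing and comparing spectral radii of row-column graphs. The row-column graph in question is a path $P_{m+1}$ on $m+1$ vertices (a path with $m$ edges), so the growth rate of the grid class equals $\rho(P_{m+1})^2$. The classical formula of Lov\'asz and Pelik\'an (which the author cites immediately before stating the corollary) gives the eigenvalues of the adjacency matrix of $P_n$ as $2\cos(k\pi/(n+1))$ for $k=1,\ldots,n$, so $\rho(P_{m+1})=2\cos(\pi/(m+2))$, and Theorem~\ref{thmGrowthRate} yields the claimed value $4\cos^2(\pi/(m+2))$.

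For the minimality assertion, let $G$ be the row-column graph of any connected monotone grid class of size $m$, so $G$ is a connected graph with $m$ edges. I need to show $\rho(G)\geqslant \rho(P_{m+1})$, and I will split into two cases. If $G$ is a tree, then $G$ has exactly $m+1$ vertices, and I appeal to the theorem of Lov\'asz and Pelik\'an that among all trees on $n$ vertices the path $P_n$ uniquely minimises the spectral radius; this gives $\rho(G)\geqslant \rho(P_{m+1})$. If instead $G$ contains a cycle, then $G$ has $n\leqslant m$ vertices. Applying the Rayleigh quotient to the all-ones vector $\mathbf{1}$ gives
$$
\rho(G)\;\geqslant\;\frac{\mathbf{1}^{T}\!A(G)\+\mathbf{1}}{\mathbf{1}^{T}\mathbf{1}}\;=\;\frac{2m}{n}\;\geqslant\;2,
$$
and since $2>2\cos(\pi/(m+2))$ for every $m\geqslant 1$, this case is settled.

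The only real delicacy is choosing a bound in the cyclic case that is strong enough without invoking heavy machinery; the elementary average-degree lower bound $\rho(G)\geqslant 2m/n$ works precisely because any connected cyclic graph on $m$ edges has $n\leqslant m$ and hence average degree at least $2$, whereas any path has $\rho<2$ strictly. Apart from citing the Lov\'asz--Pelik\'an theorem as a black box for the tree case, no further work is required.
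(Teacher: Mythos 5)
Your proof is correct, and it takes essentially the same route as the paper, which for this corollary gives only a citation: the Lov\'asz--Pelik\'an formula for $\rho(P_n)$ and the associated extremal result that paths minimise the spectral radius among trees on a given number of vertices (CRS Theorem 8.1.17), combined with Theorem~\ref{thmGrowthRate}. The one place where you add detail the paper leaves implicit is the cyclic case of the minimality claim. Your Rayleigh-quotient/average-degree bound $\rho(G)\geqslant 2m/n\geqslant 2$ is a clean elementary way to dispose of it; the paper's implicit argument is presumably the standard subgraph-monotonicity of the spectral radius (a connected graph with a cycle contains some $C_k$, and $\rho(C_k)=2>\rho(P_{m+1})$), or an appeal to the Smith-graph characterisation established just before. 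Either finishes the job, and the two arguments cost about the same; your version has the modest advantage of being fully self-contained rather than relying on the Smith-graph taxonomy.
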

A $Y$ graph
of size $m$
has spectral radius
$2\cos\!\left(\frac{\pi}{2m}\right)$,
and the spectral radii of
the three other graphs at the right of Figure~\ref{figSubSmith} are $2\cos\!\left(\frac{\pi}{12}\right)$,
$2\cos\!\left(\frac{\pi}{18}\right)$,
and
$2\cos\!\left(\frac{\pi}{30}\right)$,
from left to right (see~\cite{BH2012} 3.1.1).
Thus
we have the following characterisation of
growth rates less than~4:
\begin{cor}\label{corSmallGrowthRates}
If the growth rate of a monotone grid class is less than 4, it is equal to $4\cos^2\!\left(\frac{\pi}{k}\right)$ for some $k\geqslant 3$.
\end{cor}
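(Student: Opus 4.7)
The plan is to combine Theorem~\ref{thmGrowthRate} with Corollary~\ref{corSmith} and the classification of Smith subgraphs recalled just before this corollary, so that the proof is essentially a bookkeeping exercise over a short list of cases.

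First, since the growth rate of a disconnected grid class is the maximum of the growth rates of its connected components (as already used in the proof of Theorem~\ref{thmGrowthRate}), it suffices to treat the connected case. By Theorem~\ref{thmGrowthRate}, the growth rate of $\Grid(M)$ equals $\rho(G(M))^2$, so the hypothesis $\gr(\Grid(M))<4$ is equivalent to $\rho(G(M))<2$. Thus I only need to show that any connected graph with spectral radius strictly less than $2$ has spectral radius of the form $2\cos(\pi/k)$ for some integer $k\geqslant 3$.

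Next, I would invoke Corollary~\ref{corSmith}: such a graph must be a connected \emph{proper} subgraph of a Smith graph. By the Smith--Lemmens--Seidel classification recalled in the excerpt, these are exactly the path graphs, the $Y$ graphs, and the three additional graphs displayed in Figure~\ref{figSubSmith}. The spectral radius of each family is already stated in the preceding discussion: a path of size $m$ has spectral radius $2\cos(\pi/(m+2))$ (quoted as Corollary~\ref{corPath}), a $Y$ graph of size $m$ has spectral radius $2\cos(\pi/(2m))$, and the three remaining graphs have spectral radii $2\cos(\pi/12)$, $2\cos(\pi/18)$ and $2\cos(\pi/30)$. Squaring each via Theorem~\ref{thmGrowthRate} yields a growth rate of the form $4\cos^2(\pi/k)$ with $k\in\{m+2,\,2m,\,12,\,18,\,30\}$, and in every case the resulting integer $k$ is at least $3$ (the extreme instance being a single-edge path, for which $k=m+2=3$).

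There is no real obstacle: once the three ingredients above (growth rate equals squared spectral radius, Smith-subgraph classification, and the known spectra of paths, $Y$-graphs and the three exceptional graphs) are in hand, the result falls out by inspection. The only minor point that needs a sentence of verification is that the parameter $m$ ranges so that $k\geqslant 3$ in every family, which is immediate from the case-by-case list.
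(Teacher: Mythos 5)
Your proposal is correct and matches the paper's approach: the paper also derives this corollary by combining Theorem~\ref{thmGrowthRate} with Corollary~\ref{corSmith} (reduction to connected proper subgraphs of Smith graphs) and then reading off the known spectral radii of paths, $Y$ graphs, and the three exceptional graphs. Your added remarks about the disconnected reduction and the explicit check that $k\geqslant 3$ are just making explicit what the paper leaves implicit.
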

The only grid class growth rates
no greater than 3 are $1$, $2$, $\half(3+\sqrt{5})\approx2.618$, and $3$.

\begin{figure}[ht]
  $$
  \raisebox{-.08in}
    {\begin{tikzpicture}[scale=0.4]
    \draw [thick] (0,0)--(2,0);
    \draw [thick,dashed] (4.1,0)--(2,0);
    \draw [thick] (4.1,0)--(5.1,0);
    \draw [thick] (2,1)--(1,1)--(0,0)--(1,-1)--(2,-1);
    \draw [thick] (3.75,1)--(4.75,1);
    \draw [thick] (4.45,-1)--(5.45,-1);
    \draw [thick,dashed] (3.75,1)--(2,1);
    \draw [thick,dashed] (4.45,-1)--(2,-1);
    \draw [fill] (0,0) circle [radius=0.15];
    \foreach \x in {1,2,3.75,4.75} \draw [fill] (\x,1) circle [radius=0.15];
    \foreach \x in {1,2,4.1,5.1} \draw [fill] (\x,0) circle [radius=0.15];
    \foreach \x in {1,2,4.45,5.45} \draw [fill] (\x,-1) circle [radius=0.15];
  \end{tikzpicture}}
  \quad\quad\quad\quad
  \begin{tikzpicture}[scale=0.4]
    \draw [thick] (-0.35,0)--(0.65,0);
    \draw [thick,dashed] (1,0)--(2.75,0);
    \draw [thick] (2.75,0)--(4.75,0);
    \draw [thick] (3.75,0)--(3.75,1);
    \draw [thick,dashed] (4.75,0)--(6.5,0);
    \draw [thick] (6.5,0)--(8.5,0);
    \draw [thick] (7.5,0)--(7.5,1);
    \draw [thick,dashed] (8.5,0)--(10.95,0);
    \draw [thick] (10.95,0)--(11.95,0);
    \foreach \x in {-0.35,0.65,2.75,3.75,4.75,6.5,7.5,8.5,10.95,11.95} \draw [fill] (\x,0) circle [radius=0.15];
    \draw [fill] (3.75,1) circle [radius=0.15];
    \draw [fill] (7.5,1) circle [radius=0.15];
  \end{tikzpicture}
  $$
\caption{$E$ and $F$ graphs}\label{figEFGraphs}
\end{figure}
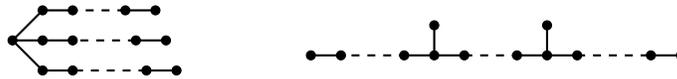
In order to characterise grid classes with growth rates slightly greater than 4,
let
an $E$ graph be a tree consisting of three paths having one endvertex in common, and an $F$ graph be a tree consisting of a path with a pendant edge attached to each of two distinct internal vertices (see Figure~\ref{figEFGraphs}).
Then,
results of
Brouwer and Neumaier~\cite{BN1989} and Cvetkovi\'{c}, Doob and Gutman~\cite{CDG1982}
imply the following (also see~\cite{CRS2010} Theorem 3.11.2):
\begin{cor}
If a connected monotone grid class has growth rate
between 4 and $2+\sqrt{5}$,
then its row-column graph is an $E$ or $F$ graph.
\end{cor}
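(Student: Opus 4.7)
The plan is to reduce the claim directly to a known classification in spectral graph theory by applying Theorem~\ref{thmGrowthRate}. Since the grid class is connected by hypothesis, its row-column graph $G(M)$ is connected, and the condition $4 < \gr(\Grid(M)) < 2+\sqrt{5}$ translates, via $\gr(\Grid(M)) = \rho(G(M))^2$, to the spectral bound $2 < \rho(G(M)) < \sqrt{2+\sqrt{5}}$. So I would first do this translation, and then only graph-theoretic work remains.

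The second step is to identify the connected graphs whose spectral radius lies strictly above $2$ and strictly below $\sqrt{2+\sqrt{5}}$. This is exactly the content of the papers of Brouwer and Neumaier~\cite{BN1989} and Cvetkovi\'c, Doob and Gutman~\cite{CDG1982}, whose combined results (summarised as Theorem~3.11.2 of~\cite{CRS2010}) assert that every such graph is either an $E$ graph or an $F$ graph. Applied to $G(M)$, this yields the corollary immediately, in the same style as the earlier Corollary~\ref{corSmith} (for graphs of spectral radius at most~$2$) and Corollary~\ref{corSmallGrowthRates} (for paths and $Y$ graphs).

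The main obstacle, such as it is, is bibliographic rather than technical: one must confirm that the cited classification is precisely right for the open interval $(2,\sqrt{2+\sqrt{5}})$, in particular that it rules out connected graphs containing cycles (since $G(M)$ is a priori only known to be bipartite, not acyclic) and handles the strict inequalities correctly. The lower endpoint is not an issue, since the Smith graphs realise $\rho = 2$ and are excluded by the strict inequality; the upper endpoint is the delicate one, as it is the limit point being quoted from~\cite{BN1989,CDG1982}. I would not attempt to reprove the classification, since throughout Section~\ref{sectConsequences} the paper is content to cite such spectral characterisations from~\cite{CRS2010} and~\cite{BH2012}.
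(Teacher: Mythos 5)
Your proposal is correct and takes essentially the same approach as the paper: the paper offers no proof beyond citing the same Brouwer--Neumaier and Cvetkovi\'c--Doob--Gutman classification (summarised as Theorem~3.11.2 of~\cite{CRS2010}) and letting Theorem~\ref{thmGrowthRate} translate the growth-rate bound into the spectral-radius bound $2 < \rho(G(M)) < \sqrt{2+\sqrt{5}}$. Your attention to the endpoints and to the fact that the classification already excludes cyclic graphs is a sensible sanity check, but it is not something the paper itself spells out.
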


Thus, since $\sqrt{2+\sqrt{5}}$ cannot be an eigenvalue of any graph (see~\cite{CRS2010} p.~93), we can deduce the following:
\begin{cor}\label{corProperCycle}
If a monotone grid class properly contains a cycle then its growth rate exceeds $2+\sqrt{5}$.
\end{cor}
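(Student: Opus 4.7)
I would prove this by translating to a spectral graph theory statement via Theorem~\ref{thmGrowthRate}, then invoking the classification result just cited. Writing $\gamma=\gr(\Grid(M))$ and $\rho=\rho(G(M))$, we have $\gamma=\rho^2$, so the task becomes showing that $\rho>\sqrt{2+\sqrt{5}}$ whenever the row-column graph properly contains a cycle. Since the spectrum of a graph is the union of the spectra of its components (as already used in the proof of Theorem~\ref{thmGrowthRate}), it suffices to handle the case where $G(M)$ is connected and properly contains some cycle $C$; the disconnected case then follows by applying the connected conclusion to the component that contains the cycle.

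Assume therefore that $G=G(M)$ is connected and $G\supsetneq C$ for some cycle $C$. A standard Perron--Frobenius fact for connected graphs says that adding any edge strictly increases the spectral radius, so $\rho(G)>\rho(C)=2$, which already gives $\gamma>4$. This is the easy half.

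The main step is to rule out the interval $(4,2+\sqrt{5}\+]$. Suppose for contradiction that $4<\gamma\leqslant 2+\sqrt{5}$. If $\gamma$ lies strictly between $4$ and $2+\sqrt{5}$, then the preceding corollary (the Brouwer--Neumaier and Cvetkovi\'c--Doob--Gutman characterisation) forces $G$ to be an $E$ graph or an $F$ graph. But both $E$ and $F$ graphs are trees, contradicting the assumption that $G$ contains the cycle $C$. If instead $\gamma=2+\sqrt{5}$, then $\rho(G)=\sqrt{2+\sqrt{5}}$, which is excluded by the cited fact (noted in the sentence just before the statement) that $\sqrt{2+\sqrt{5}}$ is not an eigenvalue of any graph. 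Hence $\gamma>2+\sqrt{5}$, as required.

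There is no real obstacle: the whole argument is an assembly of Theorem~\ref{thmGrowthRate}, the obvious monotonicity $\rho(G)>\rho(C)$, the preceding structural corollary, and the cited non-eigenvalue fact. The only point that requires any care is the reduction from arbitrary grid classes to connected ones, which is handled (as elsewhere in Section~\ref{sectConsequences}) by the component-wise description of both growth rates and spectra.
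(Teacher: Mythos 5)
Your proof is correct and takes essentially the same route the paper intends: the paper states the corollary without explicit proof, relying on the reader to assemble exactly the three ingredients you used (strict monotonicity of the spectral radius under proper subgraph containment in connected graphs to get $\rho > 2$, the Brouwer--Neumaier / Cvetkovi\'c--Doob--Gutman classification of graphs with $2 < \rho < \sqrt{2+\sqrt{5}}$ as the tree families $E$ and $F$, and the non-eigenvalue fact to rule out equality), together with the standard component-wise reduction. Nothing to add.
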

More recently, Woo and Neumaier~\cite{WN2007}
have investigated the structure of graphs with spectral radius no greater than $\frac{3}{2}\sqrt{2}$
(also see~\cite{CRS2010} Theorem 3.11.3).
As a consequence,
we have the following:
\begin{cor}
If the growth rate of
a connected monotone grid class is no greater than $\frac{9}{2}$, then its row-column graph is one of the following: \vspace{-12pt}
\begin{enumerate}[~~~~~(a)]
  \itemsep0pt
  \item a tree of maximum degree 3 such that all vertices of degree 3 lie on a path,
  \item a unicyclic graph of maximum degree 3 such that all vertices of degree 3 lie on the cycle, or
  \item a tree consisting of a path with three pendant edges attached to one endvertex.
\end{enumerate}
\end{cor}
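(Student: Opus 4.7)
The plan is to reduce the corollary to a purely spectral question and then invoke an off-the-shelf structural theorem. By Theorem~\ref{thmGrowthRate}, we have $\gr(\Grid(M)) = \rho(G(M))^2$, and since $\bigl(\tfrac{3}{2}\sqrt{2}\bigr)^{2} = \tfrac{9}{2}$, the hypothesis $\gr(\Grid(M)) \leqslant \tfrac{9}{2}$ translates to $\rho(G(M)) \leqslant \tfrac{3}{2}\sqrt{2}$. Because $\Grid(M)$ is connected, $G(M)$ is a connected (bipartite) graph, so the task becomes: classify all connected graphs with spectral radius at most $\tfrac{3}{2}\sqrt{2}$.

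The classification splits naturally according to whether $\rho(G(M)) \leqslant 2$ or $2 < \rho(G(M)) \leqslant \tfrac{3}{2}\sqrt{2}$. For the first range, Corollary~\ref{corSmith} already provides the list: $G(M)$ is a Smith graph or a connected proper subgraph of one. I would go through the explicit Smith list (paths, cycles, $Y$ graphs, $H$ graphs, $K_{1,4}$, and the three sporadic graphs of Figures~\ref{figSmith} and~\ref{figSubSmith}) and check that every one is either a tree of maximum degree $3$ whose degree-$3$ vertices lie on a path (case~(a)), a cycle (case~(b) with no pendant structure), or the star $K_{1,4}$, which is the smallest instance of case~(c). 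For the second range, I would invoke the Woo--Neumaier structure theorem \cite{WN2007} (see also \cite{CRS2010} Theorem~3.11.3), which characterises precisely those connected graphs whose spectral radius lies in $\bigl(2,\tfrac{3}{2}\sqrt{2}\bigr]$: they are either trees of maximum degree $3$ with all branch vertices on a common path, unicyclic graphs of maximum degree $3$ with all branch vertices on the unique cycle, or a restricted family of trees with a single degree-$4$ vertex, namely those obtained from a path by attaching three pendant edges at one endpoint.

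Assembling the two ranges, every graph with $\rho \leqslant \tfrac{3}{2}\sqrt{2}$ is of type~(a), (b) or (c), and conversely these types do have spectral radius at most $\tfrac{3}{2}\sqrt{2}$, so no extraneous cases appear. No additional work is required on account of bipartiteness, because each graph in the Woo--Neumaier list is already bipartite (they are trees or unicyclic graphs, and in the unicyclic case one can verify that the cycles arising with $\rho \leqslant \tfrac{3}{2}\sqrt{2}$ satisfy no parity obstruction beyond what $G(M)$ automatically enjoys).

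The main obstacle is the bookkeeping in the second step: one must check carefully that the list of graphs furnished by Woo--Neumaier collapses, without loss or duplication, onto the three described families. In particular, one must verify that the only "degree-$4$ outliers" allowed by the theorem are the members of~(c), that no additional unicyclic graphs with pendant trees sneak in, and that the boundary value $\rho = \tfrac{3}{2}\sqrt{2}$ is covered (this is why the statement uses a non-strict inequality). Once this verification is completed, the corollary follows by direct appeal to the theorems already cited, with no further computation needed.
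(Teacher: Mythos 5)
Your proposal is correct and follows the same route as the paper: convert the growth-rate bound to the spectral-radius bound $\rho(G(M))\leqslant\frac{3}{2}\sqrt{2}$ via Theorem~\ref{thmGrowthRate}, then invoke the Woo--Neumaier structure theorem. The paper's own proof is essentially just the citation to \cite{WN2007}; your two-range split (handling $\rho\leqslant 2$ separately via the Smith classification) is not wrong, but it is unnecessary, since Woo--Neumaier already characterise \emph{all} connected graphs with $\rho\leqslant\frac{3}{2}\sqrt{2}$, not only those with $\rho>2$.
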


\subsection{Accumulation points of grid class growth rates}\label{sectAccumulationPoints}
Using graph theoretic results of Hoffman and Shearer, it is possible to characterise \emph{all} accumulation points of grid class growth rates.

As we have seen, the growth rates of grid classes whose row-column graphs are paths and $Y$ graphs grow to 4 from below; 4 is the least accumulation point of growth rates. The following characterises all
accumulation points
below $2+\sqrt{5}$ (see Hoffman~\cite{Hoffman1972}):
\begin{cor}
For $k=1,2,\ldots$, let $\beta_k$ be the positive root of
$$P_k(x)=x^{k+1}-(1+x+x^2+\ldots+x^{k-1})$$
and let
$\gamma_k=2+\beta_k+\beta_k^{-1}$.
Then $4=\gamma_1<\gamma_2<\ldots$ are all the accumulation points of growth rates of
monotone grid classes smaller than $2+\sqrt{5}$.
\end{cor}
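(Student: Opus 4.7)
The plan is to translate the question about accumulation points of grid class growth rates into a question about accumulation points of spectral radii of bipartite graphs, and then invoke Hoffman's classification of small limit points of spectral radii.

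First, I would observe that by Theorem~\ref{thmGrowthRate}, the set of grid class growth rates is exactly $\{\rho(G)^2 : G \text{ a row-column graph}\}$, and row-column graphs are precisely the bipartite graphs (any bipartite graph arises from some $0/\!\pm\!1$ matrix). Hence accumulation points of grid class growth rates less than $2+\sqrt{5}$ are in bijection with accumulation points of spectral radii of bipartite graphs less than $\sqrt{2+\sqrt{5}}$, via the map $\alpha \mapsto \alpha^2$. A small sanity check: $P_1(x) = x^2 - 1$ gives $\beta_1 = 1$ and hence $\gamma_1 = 4$, matching the fact that 4 is already known to be an accumulation point (by growth rates of paths, via Corollary~\ref{corPath}).

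Next, I would cite Hoffman's theorem from \cite{Hoffman1972}, which states that the accumulation points of spectral radii of \emph{all} graphs lying in the interval $[2,\sqrt{2+\sqrt{5}})$ form the strictly increasing sequence $\alpha_k = \sqrt{2+\beta_k+\beta_k^{-1}}$, with $\alpha_1 = 2$ and $\alpha_k \to \sqrt{2+\sqrt{5}}$. Squaring gives $\alpha_k^2 = \gamma_k$, so once the bijection above is established, the $\gamma_k$ are the only candidates for accumulation points of grid class growth rates in $[4, 2+\sqrt{5})$. Below $4$, Corollary~\ref{corSmallGrowthRates} shows the grid class growth rates form the discrete set $\{4\cos^2(\pi/k): k \geqslant 3\}$, whose only accumulation point is $4 = \gamma_1$, so nothing is missed at the low end.

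The remaining step is to check that each $\gamma_k$ is actually \emph{attained} as an accumulation point by grid class growth rates, not merely by spectral radii of arbitrary graphs. Hoffman's sequences of graphs realising each $\alpha_k$ as a limit consist of trees — specifically, paths with a fixed small attachment whose length is sent to infinity — and trees are bipartite. Therefore each such graph is a row-column graph of a grid class, and the squared spectral radii of this sequence give growth rates tending to $\gamma_k$. Combining the two directions yields exactly the stated characterisation.

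The main obstacle is really just bookkeeping: one must verify that the bipartite restriction does not remove any of Hoffman's limit points (it does not, because his extremal graphs are trees) and that the $\gamma_k$ are distinct and strictly increasing, which follows from the monotonicity of $\beta_k$ (the unique positive root of $P_k$ decreases in $k$ from $1$ towards a limit less than $1$, and the map $\beta \mapsto \beta + \beta^{-1}$ is decreasing on $(0,1]$, so $\gamma_k$ increases). Once these are in hand, the corollary follows directly from Theorem~\ref{thmGrowthRate} together with Hoffman's theorem.
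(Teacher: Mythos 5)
Your approach is the same as the paper's, which gives no detailed proof and simply cites Hoffman~\cite{Hoffman1972} alongside Theorem~\ref{thmGrowthRate}: translate Hoffman's classification of the limit points of graph spectral radii below $\sqrt{2+\sqrt{5}}$ into statements about grid class growth rates by squaring, and observe that Hoffman's extremal sequences are trees (hence bipartite, hence row-column graphs), so the restriction to bipartite graphs loses nothing. That part of your argument is sound.

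However, your final monotonicity check contains a factual error. You assert that $\beta_k$, the positive root of $P_k(x)=x^{k+1}-(1+x+\cdots+x^{k-1})$, ``decreases in $k$ from $1$ towards a limit less than $1$,'' and then appeal to $\beta\mapsto\beta+\beta^{-1}$ being decreasing on $(0,1]$. In fact $\beta_k$ \emph{increases}: $\beta_1=1$, $\beta_2\approx1.325$, and $\beta_k\to\varphi=\tfrac{1+\sqrt5}{2}\approx1.618$ from below (note $P_{k+1}(\beta_k)=\beta_k^{\,k}(\beta_k^2-\beta_k-1)<0$ for $1\leqslant\beta_k<\varphi$, forcing $\beta_{k+1}>\beta_k$). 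The correct reason $\gamma_k=2+\beta_k+\beta_k^{-1}$ is strictly increasing is that $\beta\mapsto\beta+\beta^{-1}$ is \emph{increasing} on $[1,\infty)$, and its limit is $2+\varphi+\varphi^{-1}=2+\sqrt{5}$, which is exactly what makes $2+\sqrt{5}$ the supremum of this accumulation sequence. The conclusion you reach is correct, but the stated reasoning for it would not survive scrutiny; alternatively, the monotonicity and the limit are already part of Hoffman's theorem and could simply be cited rather than rederived.
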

The approximate values of
the first eight accumulation points
are:
4, 4.07960, 4.14790, 4.18598, 4.20703, 4.21893, 4.22582, 4.22988.

At $2+\sqrt{5}$, things change dramatically; from this value upwards grid class growth rates are dense (see Shearer~\cite{Shearer1989}):
\begin{cor}\label{corAccumulationPoints}
Every $\gamma\geqslant2+\sqrt{5}$ is an accumulation point of growth rates of
monotone grid classes.
\end{cor}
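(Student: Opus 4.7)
The plan is to deduce this from a theorem of Shearer~\cite{Shearer1989} via Theorem~\ref{thmGrowthRate}, after transferring from general graphs to bipartite graphs. By Theorem~\ref{thmGrowthRate}, the set of growth rates of monotone grid classes equals $\{\rho(H)^2 : H \text{ is the row-column graph of some } \Grid(M)\}$, and since every bipartite graph arises as such a row-column graph (as noted in Section~\ref{sectGridClasses}), this set is precisely $\{\rho(H)^2 : H \text{ bipartite}\}$. So it suffices to prove that every $\sqrt{\gamma}$ with $\gamma \geqslant 2+\sqrt{5}$ is an accumulation point of spectral radii of bipartite graphs.

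Shearer's result~\cite{Shearer1989} asserts that every real number $\tau \geqslant \sqrt{2+\sqrt{5}}$ is a limit point of the set of spectral radii of all finite graphs. Thus for any $\gamma \geqslant 2+\sqrt{5}$, we can take Shearer's sequence $G_1, G_2, \ldots$ of graphs with $\rho(G_n) \to \sqrt{\gamma}$ and (without loss of generality) $\rho(G_n) \neq \sqrt{\gamma}$ for all $n$.

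The remaining step is to replace each $G_n$ by a bipartite graph of the same spectral radius. For this I would use the \emph{bipartite double cover} $\widetilde{G}_n = G_n \times K_2$, whose adjacency matrix, after a suitable vertex ordering, has block form $\bigl(\begin{smallmatrix} 0 & A(G_n) \\ A(G_n) & 0 \end{smallmatrix}\bigr)$. Since $A(G_n)$ is symmetric, the eigenvalues of this block matrix are $\pm \lambda$ for each eigenvalue $\lambda$ of $A(G_n)$, so $\rho(\widetilde{G}_n) = \rho(G_n)$. Each $\widetilde{G}_n$ is bipartite by construction, hence is the row-column graph of some grid class $\Grid(M_n)$, and by Theorem~\ref{thmGrowthRate}, $\gr(\Grid(M_n)) = \rho(\widetilde{G}_n)^2 = \rho(G_n)^2 \to \gamma$, with each $\gr(\Grid(M_n)) \neq \gamma$. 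Hence $\gamma$ is an accumulation point of grid class growth rates.

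The main substantive input is Shearer's density theorem; the rest is a routine reduction. The only potential obstacle is the bipartite-to-general transfer, but the bipartite double cover handles this cleanly since the spectral radius is preserved and the construction is entirely elementary.
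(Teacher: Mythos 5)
Your proof is correct, and it takes a genuinely different route from the paper's. The paper essentially cites Shearer~\cite{Shearer1989} and observes that Shearer's iterative construction for approximating any $\tau > \sqrt{2+\sqrt{5}}$ by spectral radii already produces \emph{caterpillars}, which are trees and hence bipartite; thus no transfer from general graphs to bipartite graphs is needed, and the row-column graphs of the approximating grid classes come directly out of Shearer's construction. You instead treat Shearer's theorem as a black box about arbitrary finite graphs and handle the bipartiteness requirement yourself via the bipartite double cover $G \times K_2$, correctly computing that its spectrum is $\{\pm\lambda : \lambda \in \mathrm{spec}(G)\}$ so that $\rho$ is preserved. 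Both routes are sound. What your approach buys is that it only depends on the \emph{statement} of Shearer's density theorem, not on internal details of his construction; what the paper's approach buys is that it is shorter and also gives the extra information (recorded in the paper) that the approximating row-column graphs can be taken to be caterpillars converging from below. One very minor stylistic point: the extraction of a sequence with $\rho(G_n) \neq \sqrt{\gamma}$ deserves a word — it follows because a countable set that is dense in an interval has every point of the interval as an accumulation point — but your ``without loss of generality'' is doing exactly the right work there.
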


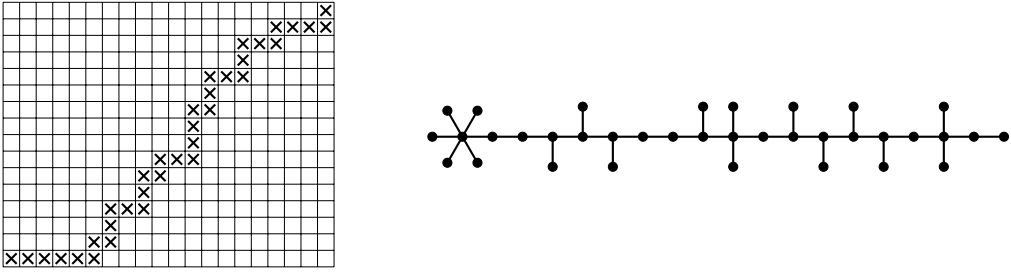
\begin{figure}[ht]
$$
\gclass[0.22]{20}{16}{
\gxrow{15}{0,0,0,0,0,0,0,0,0,0,0,0,0,0,0,0,0,0,0,1}
\gxrow{14}{0,0,0,0,0,0,0,0,0,0,0,0,0,0,0,0,1,1,1,1}
\gxrow{13}{0,0,0,0,0,0,0,0,0,0,0,0,0,0,1,1,1,0,0,0}
\gxrow{12}{0,0,0,0,0,0,0,0,0,0,0,0,0,0,1,0,0,0,0,0}
\gxrow{11}{0,0,0,0,0,0,0,0,0,0,0,0,1,1,1,0,0,0,0,0}
\gxrow{10}{0,0,0,0,0,0,0,0,0,0,0,0,1,0,0,0,0,0,0,0}
\gxrow{ 9}{0,0,0,0,0,0,0,0,0,0,0,1,1,0,0,0,0,0,0,0}
\gxrow{ 8}{0,0,0,0,0,0,0,0,0,0,0,1,0,0,0,0,0,0,0,0}
\gxrow{ 7}{0,0,0,0,0,0,0,0,0,0,0,1,0,0,0,0,0,0,0,0}
\gxrow{ 6}{0,0,0,0,0,0,0,0,0,1,1,1,0,0,0,0,0,0,0,0}
\gxrow{ 5}{0,0,0,0,0,0,0,0,1,1,0,0,0,0,0,0,0,0,0,0}
\gxrow{ 4}{0,0,0,0,0,0,0,0,1,0,0,0,0,0,0,0,0,0,0,0}
\gxrow{ 3}{0,0,0,0,0,0,1,1,1,0,0,0,0,0,0,0,0,0,0,0}
\gxrow{ 2}{0,0,0,0,0,0,1,0,0,0,0,0,0,0,0,0,0,0,0,0}
\gxrow{ 1}{0,0,0,0,0,1,1,0,0,0,0,0,0,0,0,0,0,0,0,0}
\gxrow{ 0}{1,1,1,1,1,1,0,0,0,0,0,0,0,0,0,0,0,0,0,0}
}
\quad\quad\quad
\raisebox{.5in}{\begin{tikzpicture}[scale=0.4]
  \draw [thick] (0,0)--(19,0);
  \foreach \x in {0,...,19} \draw [fill] (\x,0) circle [radius=0.15];
  \foreach \x in {4,6,10,13,15,17}
  { \draw [fill] (\x,-1) circle [radius=0.15]; \draw [thick] (\x,0)--(\x,-1); }
  \foreach \x in {10,17,5,9,12,14}
  { \draw [fill] (\x,1) circle [radius=0.15]; \draw [thick] (\x,0)--(\x,1); }
  \foreach \x in {0.5,1.5}
  {
    \draw [fill] (\x,.866) circle [radius=0.15];
    \draw [fill] (\x,-.866) circle [radius=0.15];
    \draw [thick] (1,0)--(\x,.866);
    \draw [thick] (1,0)--(\x,-.866);
  }
\end{tikzpicture}}
$$
\caption{A grid diagram
whose growth rate differs from $2\pi$ by less than $10^{-7}$,
and its caterpillar row-column graph}\label{figCaterpillar}
\end{figure}
Thus, for every $\gamma\geqslant2+\sqrt{5} \approx4.236068$,
there is a
grid class with growth rate arbitrarily close to $\gamma$.
Indeed, for $\gamma>2+\sqrt{5}$, Shearer's proof provides an iterative process for generating a sequence of grid classes,
each with a row-column graph that
is a \emph{caterpillar}
(a tree such that all vertices of degree 2 or more lie on a path), with growth rates converging to $\gamma$ from below.
An example is shown in Figure~\ref{figCaterpillar}.

\subsection{Increasing the size of a grid class}
We now consider
the effect on the growth rate of making
small changes to a grid class.

Firstly, growth rates of connected grid classes satisfy a \emph{strict} monotonicity condition (see~\cite{CRS2010}
Proposition 1.3.10):
\begin{cor}\label{corAddCell}
Adding a non-zero cell to a connected monotone grid class
while preserving connectivity
increases its growth rate.
\end{cor}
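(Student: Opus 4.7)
The plan is to apply Theorem~\ref{thmGrowthRate} to reduce the problem to a statement about spectral radii of row-column graphs. Let $\Grid(M)$ be a connected monotone grid class with row-column graph $G = G(M)$, and let $\Grid(M')$ denote the result of adding a single non-zero cell at position $(i,j)$, with row-column graph $G' = G(M')$. By hypothesis $G'$ is connected.

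First I would describe the change in the row-column graph. Adding the non-zero entry $M'_{i,j}$ introduces the edge $r_i c_j$ into $G'$; if the row index $i$ or column index $j$ is not present in $M$, then the corresponding vertex is also new and, acquiring only this one incident edge, becomes a pendant. In all cases $G$ embeds into $G'$ as a proper (edge) subgraph. The hypothesis that connectivity is preserved rules out exactly the configuration in which both the row and the column are new, which would produce an isolated pendant edge disjoint from $G$.

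The key step is to invoke the strict monotonicity of the Perron root of a nonnegative irreducible matrix. Viewing $A(G)$ as a principal submatrix of $A(G')$, padded with zero rows and columns at any new vertices, we have $A(G) \leqslant A(G')$ entrywise, with strict inequality in the two off-diagonal entries corresponding to the added edge. Because $G'$ is connected, $A(G')$ is irreducible, so the standard strict form of the Perron--Frobenius theorem (\cite{CRS2010}, Proposition~1.3.10) gives $\rho(G) < \rho(G')$. Squaring and applying Theorem~\ref{thmGrowthRate} then yields
$$
\gr(\Grid(M)) \;=\; \rho(G)^2 \;<\; \rho(G')^2 \;=\; \gr(\Grid(M')),
$$
as required.

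There is no genuine obstacle here beyond Theorem~\ref{thmGrowthRate} itself: once the spectral interpretation of the growth rate is available, the corollary is a direct transcription of a classical fact from spectral graph theory. The only subtlety worth flagging in the write-up is that the connectivity hypothesis is used precisely where it is needed, namely to guarantee the irreducibility of $A(G')$ that licenses the \emph{strict} Perron--Frobenius inequality; without it, the conclusion would weaken to $\rho(G) \leqslant \rho(G')$.
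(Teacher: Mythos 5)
Your proposal is correct and matches the paper's approach: the paper simply cites the strict monotonicity of the spectral radius (CRS2010, Proposition~1.3.10) together with Theorem~\ref{thmGrowthRate}, which is exactly the Perron--Frobenius argument you spell out. Your additional remark that connectivity of $G(M')$ is what licenses the \emph{strict} inequality (via irreducibility) is the right observation and is precisely the content of the cited proposition.
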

On the other hand,
particularly surprising
is the fact that grid classes with \emph{longer internal} paths or cycles have \emph{lower} growth rates.

An edge $e$ of $G$ is said to lie on an \textbf{endpath} of $G$ if $G-e$ is disconnected and one of its components is a (possibly trivial) path. An edge that does not lie on an endpath is said to be \textbf{internal}.
Note that a graph has an internal edge if and only if it contains either a cycle or non-star $H$ graph.

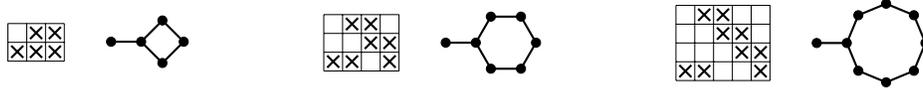
\begin{figure}[ht]
    $$
    \gxtwo{3}{0,1,1}{1,1,1}
    \quad
    \raisebox{-0.09in}{\begin{tikzpicture}[scale=0.4]
    \draw [thick] (0,0)--(1,0)--(1.707,0.707)--(2.414,0)--(1.707,-0.707)--(1,0);
    \draw [fill] (0,0) circle [radius=0.15];
    \draw [fill] (1,0) circle [radius=0.15];
    \draw [fill] (2.414,0) circle [radius=0.15];
    \draw [fill] (1.707,0.707) circle [radius=0.15];
    \draw [fill] (1.707,-0.707) circle [radius=0.15];
    \end{tikzpicture}}
    \quad\quad\quad\quad
    \gxthree{4}{0,1,1}{0,0,1,1}{1,1,0,1}
    \quad
    \raisebox{-0.12in}{\begin{tikzpicture}[scale=0.4]
    \draw [thick] (0,0)--(1,0)--(1.5,0.866)--(2.5,0.866)--(3,0)--(2.5,-0.866)--(1.5,-0.866)--(1,0);
    \draw [fill] (0,0) circle [radius=0.15];
    \draw [fill] (1,0) circle [radius=0.15];
    \draw [fill] (3,0) circle [radius=0.15];
    \draw [fill] (1.5,0.866) circle [radius=0.15];
    \draw [fill] (2.5,0.866) circle [radius=0.15];
    \draw [fill] (1.5,-0.866) circle [radius=0.15];
    \draw [fill] (2.5,-0.866) circle [radius=0.15];
    \end{tikzpicture}}
    \quad\quad\quad\quad
    \gxfour{5}{0,1,1}{0,0,1,1}{0,0,0,1,1}{1,1,0,0,1}
    \quad
    \raisebox{-0.19in}{\begin{tikzpicture}[scale=0.4]
    \draw [thick] (0,0)--(1,0)--(1.383,0.924)--(2.307,1.307)--(3.23,0.924)--(3.613,0)--(3.23,-0.924)--(2.307,-1.307)--(1.383,-0.924)--(1,0);
    \draw [fill] (0,0) circle [radius=0.15];
    \draw [fill] (1,0) circle [radius=0.15];
    \draw [fill] (3.613,0) circle [radius=0.15];
    \draw [fill] (1.383,0.924) circle [radius=0.15];
    \draw [fill] (1.383,-0.924) circle [radius=0.15];
    \draw [fill] (3.23,0.924) circle [radius=0.15];
    \draw [fill] (3.23,-0.924) circle [radius=0.15];
    \draw [fill] (2.307,1.307) circle [radius=0.15];
    \draw [fill] (2.307,-1.307) circle [radius=0.15];
    \end{tikzpicture}}
    \vspace{-3pt}
    $$
\caption{Three unicyclic grid diagrams, of increasing size but decreasing growth rate from left to right, and their row-column graphs}\label{figSubdivision}
\end{figure}
An early result of Hoffman and Smith~\cite{HS1975}
shows that the
subdivision of an internal edge \emph{reduces} the spectral radius (also see~\cite{BH2012} Proposition 3.1.4 and~\cite{CRS2010} Theorem 8.1.12). Hence, we can deduce the following unexpected consequence for grid classes:
\begin{cor}
\label{corSubdivision}
If $\Grid(M)$ is connected,
and $G(M')$ is obtained from $G(M)$ by subdividing an internal edge, then $\gr(\Grid(M'))<\gr(\Grid(M))$
unless $G(M)$ is a cycle or an $H$ graph.
\end{cor}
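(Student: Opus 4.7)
The plan is to translate the growth-rate inequality into a statement about spectral radii via Theorem~\ref{thmGrowthRate}, and then to invoke the Hoffman-Smith subdivision theorem cited immediately before the corollary.

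The first step is just to apply Theorem~\ref{thmGrowthRate}: this gives $\gr(\Grid(M)) = \rho(G(M))^2$ and $\gr(\Grid(M')) = \rho(G(M'))^2$. Since $x\mapsto x^2$ is strictly increasing on $[0,\infty)$, the claim reduces to the spectral inequality $\rho(G(M')) < \rho(G(M))$.

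The second step is to apply Hoffman-Smith: subdividing an internal edge of a connected graph strictly decreases the spectral radius unless subdivision keeps the graph within the family of Smith graphs, which happens precisely when the graph is a cycle or an H-graph (the two infinite families of Smith graphs that are closed under subdivision, as opposed to the isolated exceptional Smith graphs $\tilde E_6, \tilde E_7, \tilde E_8$). Since $\Grid(M)$ is connected, so is $G(M)$, and by hypothesis $G(M)$ is neither a cycle nor an H-graph, so Hoffman-Smith delivers the required strict decrease in spectral radius.

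The main thing to be careful about is the bipartiteness constraint on row-column graphs: a single subdivision of an edge of a bipartite graph is not generally bipartite, so in the grid-class setting ``subdivision of an internal edge'' must be interpreted as inserting \emph{two} new vertices along the edge, which is precisely what is visible in Figure~\ref{figSubdivision}, where successive cycle lengths jump by two. I would handle this by factoring a double subdivision as two successive single subdivisions $G(M) = H_0 \to H_1 \to H_2 = G(M')$. Hoffman-Smith applied to the first step is already strict (since $G(M)$ is not Smith), giving $\rho(H_1) < \rho(H_0)$; the second step yields at worst $\rho(H_2) \leq \rho(H_1)$, regardless of whether the non-bipartite intermediate $H_1$ happens to be Smith. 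Composing gives the strict inequality $\rho(G(M')) < \rho(G(M))$, and hence the desired conclusion after squaring.
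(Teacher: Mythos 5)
Your proof is correct and takes the same route as the paper: Theorem~\ref{thmGrowthRate} to convert the claim into a spectral inequality, then the Hoffman--Smith subdivision theorem. You have actually been more careful than the paper's one-sentence deduction by noticing that bipartiteness of a row-column graph forces an internal edge lying on a cycle to be subdivided \emph{twice} (exactly as Figure~\ref{figSubdivision} shows), and your factorisation into two single subdivisions handles this cleanly while remaining agnostic about whether the non-bipartite intermediate lies in the Smith family. One small slip at the end: you justify strictness at the first subdivision ``since $G(M)$ is not Smith,'' but this is not the hypothesis and could fail --- $G(M)$ could be one of the three exceptional bipartite Smith trees, which are not excluded by the corollary. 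The property you actually need, and which you correctly identified two sentences earlier, is that $G(M)$ is neither a cycle nor an $H$ graph; that is precisely the condition for Hoffman--Smith to give a strict decrease, and it covers the exceptional Smith trees too. (As a side note, the second subdivision is in fact also strict: the intermediate $H_1$ cannot be a cycle or an $H$ graph, since unsubdividing an internal edge of either would produce a cycle or $H$ graph again, contradicting the hypothesis on $G(M)$ --- but your nonstrict bound is enough.)
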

\vspace{-7pt}
For an example, see Figure~\ref{figSubdivision}.

\subsection{Grid classes with extremal growth rates}
Finally, we briefly consider grid classes with maximal or minimal growth rates for their size.

\begin{figure}[ht]
  $$
  \gxone{7}{1,1,1,1,1,1,1}
  \quad\quad\quad
  \raisebox{-.16in}{\begin{tikzpicture}[scale=0.40]
    \draw [thick] (.716,-.898)--(0,0)--(.716,.898);
    \draw [thick] (-.256,-1.121)--(0,0)--(-.256,1.121);
    \draw [thick] (-1.036,-.499)--(0,0)--(-1.036,.499);
    \draw [thick] (0,0)--(1,0);
    \draw [fill] (0,0) circle [radius=0.15];
    \draw [fill] (1,0) circle [radius=0.15];
    \draw [fill] (.716,-.898) circle [radius=0.15];
    \draw [fill] (-.256,-1.121) circle [radius=0.15];
    \draw [fill] (-1.036,-.499) circle [radius=0.15];
    \draw [fill] (.716,.898) circle [radius=0.15];
    \draw [fill] (-.256,1.121) circle [radius=0.15];
    \draw [fill] (-1.036,.499) circle [radius=0.15];
  \end{tikzpicture}}
  \vspace{-3pt}
  $$
\caption{A skinny grid diagram
and its row-column star graph}\label{figSkinny}
\end{figure}
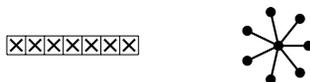
We call a grid class of a $1\times m$ matrix a \textbf{skinny} grid class. The row-column graph of a skinny grid class is a \emph{star} (see Figure~\ref{figSkinny}).
Stars have maximal spectral radius among trees (see~\cite{CRS2010} Theorem 8.1.17). This yields:
\begin{cor}\label{corSkinny}
Among all connected acyclic monotone grid classes of size $m$,
the skinny grid classes
have the largest growth rate
(equal to $m$).
\end{cor}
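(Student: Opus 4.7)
The plan is to reduce the statement to the corresponding fact about spectral radii of trees. By Theorem~\ref{thmGrowthRate}, the growth rate of any monotone grid class equals the square of the spectral radius of its row-column graph, so it suffices to show that among all trees with $m$ edges arising as row-column graphs of size-$m$ connected acyclic grid classes, the star $K_{1,m}$ achieves the maximum spectral radius, and that this maximum equals $\sqrt{m}$.

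First I would identify the row-column graph of a skinny grid class. A skinny grid class is defined by a $1\times m$ matrix with $m$ non-zero entries, so its row-column graph has one ``row vertex'' adjacent to $m$ distinct ``column vertices'' — that is, precisely the star $K_{1,m}$. A standard calculation (the nonzero eigenvalues of the adjacency matrix of $K_{1,m}$ are $\pm\sqrt{m}$) gives $\rho(K_{1,m})=\sqrt{m}$, and Theorem~\ref{thmGrowthRate} then yields growth rate $m$ for the skinny class.

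Next I would handle an arbitrary connected acyclic grid class $\Grid(M)$ of size $m$. Since it has $m$ non-zero cells, its row-column graph $G(M)$ has exactly $m$ edges; being connected and acyclic, it is a tree with $m$ edges (hence $m+1$ vertices). The key spectral graph theoretic ingredient is the well-known result (cited already in the excerpt, for instance~\cite{CRS2010} Theorem 8.1.17) that among all trees with a fixed number of edges, the star attains the maximum spectral radius. Applying this to $G(M)$ gives $\rho(G(M))\leqslant \rho(K_{1,m})=\sqrt{m}$, so by Theorem~\ref{thmGrowthRate}
\[
\gr(\Grid(M))\;=\;\rho(G(M))^2\;\leqslant\; m\;=\;\gr(\text{skinny class of size }m),
\]
establishing the corollary.

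There is no real obstacle here: the main work has already been done in Theorem~\ref{thmGrowthRate}, and the remainder is an invocation of a classical extremal result for trees. The only point to verify is the identification of the row-column graph of a skinny class with the star, which is immediate from the definitions in Subsection~\ref{sectGridClassDefs}. (Note that since every bipartite graph — in particular $K_{1,m}$ and every tree — arises as the row-column graph of some grid class, the comparison is over a genuinely comprehensive family.)
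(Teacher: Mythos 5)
Your proof is correct and follows essentially the same route as the paper: identify the row-column graph of a skinny class as the star $K_{1,m}$, note $\rho(K_{1,m})=\sqrt{m}$, invoke the classical fact that stars maximise spectral radius among trees with a fixed number of edges (the same reference, \cite{CRS2010} Theorem 8.1.17), and finish with Theorem~\ref{thmGrowthRate}. Nothing is missing.
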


We have already seen (Corollary~\ref{corPath}) that the
connected grid classes with \emph{smallest} growth rates are those whose row-column graph is a path. For unicyclic grid classes, we have the following (see~\cite{CRS2010} Theorem 8.1.18):
\begin{cor}\label{corCycleMin}
Among all connected unicyclic monotone grid classes of size $m$, those
whose row-column graph is a single cycle of length $m$
have the smallest growth rate
(equal to 4).
\end{cor}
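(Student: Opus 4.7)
The plan is to apply Theorem~\ref{thmGrowthRate} to translate the claim into a statement about spectral radii of connected unicyclic bipartite graphs with $m$ edges, and then dispatch it using two corollaries already established in the paper. Concretely, I want to show that the $m$-cycle $C_m$ uniquely minimises the growth rate among connected unicyclic grid classes of size $m$, achieving value exactly $4$, while every other such grid class has growth rate strictly greater than $4$.

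First, I would observe that by Corollary~\ref{corCycle}, any monotone grid class whose row-column graph is a cycle has growth rate $4$; in particular, whenever $C_m$ is realisable as a row-column graph (i.e., when it is bipartite, so $m$ even), the $m$-cycle gives growth rate exactly $4$. This establishes that the stated infimum is attained.

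Second, I would argue that no other connected unicyclic row-column graph $G$ of size $m$ can match this value. Since $G$ is unicyclic, it contains a unique cycle $C$, and since $G$ is not itself equal to a cycle on $m$ edges, there is at least one edge of $G$ outside $C$. Hence $C$ is a \emph{proper} subgraph of $G$. Corollary~\ref{corProperCycle} then gives $\gr(\Grid(M)) > 2 + \sqrt{5} > 4$ for any matrix $M$ with $G(M) = G$. Combined with the previous paragraph, this proves the corollary.

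The main (minor) obstacle is purely definitional: one must verify that the phrase "properly contains a cycle" in Corollary~\ref{corProperCycle} genuinely covers every connected unicyclic graph other than a pure cycle. This is immediate, since any edge of $G$ lying outside its unique cycle witnesses proper containment, so no further spectral estimates beyond those already established in Section~\ref{sectConsequences} are required.
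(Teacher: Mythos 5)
Your argument is correct, and it follows a genuinely different route from the paper's. The paper establishes this corollary simply by appealing to a known extremal theorem from spectral graph theory (essentially that among connected unicyclic graphs with a given number of edges, the cycle minimises the spectral radius). You instead derive the result entirely from material already proved inside the paper: Corollary~\ref{corCycle} gives the value $4$ for a cycle, and Corollary~\ref{corProperCycle} shows that any connected unicyclic row-column graph that is not itself a cycle properly contains its unique cycle and hence has growth rate strictly exceeding $2+\sqrt{5}>4$. This self-contained deduction buys you something the citation does not make explicit: it exhibits a \emph{gap} in the set of unicyclic growth rates, so the cycle is not only the minimiser but is isolated from all other unicyclic values by at least $\sqrt{5}-2$; it also handles the bipartiteness caveat (odd $m$) cleanly, since the second half of the argument makes no use of a cycle of length $m$ being realisable. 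The only cost is that your argument inherits the (slightly implicit) reading of ``properly contains a cycle'' in Corollary~\ref{corProperCycle} as a statement about the row-column graph, which you correctly identify and justify.
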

There are many additional results known concerning graphs with extremal values for their spectral radii, especially for graphs with a small number of cycles.
For an example, see the two papers by Simi\'c~\cite{Simic1987a,Simic1989}
on the largest eigenvalues of unicyclic and bicyclic graphs.
Results like these
can be translated into further facts concerning the growth rates of grid classes.

\section{Concluding remarks}\label{sectRemarks}
In light of Theorem~\ref{thmGrowthRate},
it seems likely to be worthwhile investigating
whether there are
other links between spectral graph theory and permutation grid classes, or indeed permutation classes in general.
Specifically, are there properties of grid classes which are associated with
other eigenvalues of the adjacency or Laplacian matrices of the row-column graph?
And can algebraic graph theory be used to help determine
the growth rate of an arbitrary permutation class (e.g. specified by its basis)?

Closely related to grid classes are \emph{geometric} grid classes, as investigated
by Albert, Atkinson, Bouvel, Ru\v{s}kuc and Vatter~\cite{AABRV2011}.
The geometric grid class $\Geom(M)$ is a subset of $\Grid(M)$, permutations in $\Geom(M)$ satisfying an additional ``geometric'' constraint.
Recently, the present author~\cite{Bevan2013b} has proved
a result similar to Theorem~\ref{thmGrowthRate}
for the growth rates of geometric grid classes.
Specifically, the growth rate of geometric grid class $\Geom(M)$ exists and is equal to the square of the
largest root of the \emph{matching polynomial} of the row-column graph of what is known as the ``double refinement''
of matrix $M$.
This value coincides with $\rho(G(M))^2$ for acyclic classes, $\Geom(M)$ and $\Grid(M)$ being identical when $G(M)$ is a forest.

\subsubsection*{Acknowledgements}
Grateful thanks are due to Robert Brignall for
numerous discussions related to this work, and for
much helpful advice and thorough feedback on earlier drafts of this paper,
and also to an anonymous referee whose very thorough comments led to some significant improvements in the presentation.
\begin{center}
{\footnotesize\greektext $\maltese\;\;$ MONW SOFW JEW H DOXA EIS TOUS AIWNAS $\;\;\maltese$}
\end{center}


\bibliographystyle{plain}
{\footnotesize\bibliography{mybib}}

\begin{thebibliography}{10}

\bibitem{AAB2011}
M.~H. Albert, M.~D. Atkinson, and R.~Brignall.
\newblock The enumeration of permutations avoiding 2143 and 4231.
\newblock {\em Pure Mathematics and Applications}, 22(2):87--98, 2011.

\bibitem{AAB2012}
M.~H. Albert, M.~D. Atkinson, and Robert Brignall.
\newblock The enumeration of three pattern classes using monotone grid classes.
\newblock {\em Electron. J. Combin.}, 19(3):~Research paper 20, 34 pp.
  (electronic), 2012.

\bibitem{AAB2010}
Michael Albert, Mike Atkinson, and Robert Brignall.
\newblock Notes on the basis of monotone grid classes.
\newblock Unpublished, 2010.

\bibitem{AABRV2011}
Michael~H. Albert, M.~D. Atkinson, Mathilde Bouvel, Nik Ru{\v{s}}kuc, and
  Vincent Vatter.
\newblock Geometric grid classes of permutations.
\newblock {\em Trans. Amer. Math. Soc.}, 365(11):5859--5881, 2013.

\bibitem{AAV2012}
Michael~H. Albert, M.~D. Atkinson, and Vincent Vatter.
\newblock Inflations of geometric grid classes: three case studies.
\newblock {\em Preprint.
  \href{http://arxiv.org/pdf/1209.0425}{arXiv:1209.0425}}, 2012.

\bibitem{Atkinson1998}
M.~D. Atkinson.
\newblock Permutations which are the union of an increasing and a decreasing
  subsequence.
\newblock {\em Electron. J. Combin.}, 5:~Research paper 6, 13 pp. (electronic),
  1998.

\bibitem{Atkinson1999}
M.~D. Atkinson.
\newblock Restricted permutations.
\newblock {\em Discrete Math.}, 195(1-3):27--38, 1999.

\bibitem{Bevan2012}
David Bevan.
\newblock Permutation grid classes.
\newblock
  \href{http://demonstrations.wolfram.com/PermutationGridClasses}{http:/$\!$/demonstrations.wolfram.com/PermutationGrid}
  \href{http://demonstrations.wolfram.com/PermutationGridClasses}{Classes},
  Wolfram Demonstrations Project, 2012.

\bibitem{Bevan2013b}
David Bevan.
\newblock Growth rates of geometric grid classes of permutations.
\newblock {\em Preprint.
  \href{http://arxiv.org/pdf/1306.4246}{arXiv:1306.4246}}, 2013.

\bibitem{BN1989}
A.~E. Brouwer and A.~Neumaier.
\newblock The graphs with spectral radius between {$2$} and
  {$\sqrt{2+\sqrt{5}}$}.
\newblock {\em Linear Algebra Appl.}, 114/115:273--276, 1989.

\bibitem{BH2012}
Andries~E. Brouwer and Willem~H. Haemers.
\newblock {\em Spectra of graphs}.
\newblock Universitext. Springer, New York, 2012.

\bibitem{CDG1982}
Drago{\v{s}} Cvetkovi{\'c}, Michael Doob, and Ivan Gutman.
\newblock On graphs whose spectral radius does not exceed
  {$(2+\sqrt{5})^{1/2}$}.
\newblock {\em Ars Combin.}, 14:225--239, 1982.

\bibitem{CRS2010}
Drago{\v{s}} Cvetkovi{\'c}, Peter Rowlinson, and Slobodan Simi{\'c}.
\newblock {\em An introduction to the theory of graph spectra}, volume~75 of
  {\em London Mathematical Society Student Texts}.
\newblock Cambridge University Press, Cambridge, 2010.

\bibitem{FK2008}
Ji{\v{r}}{\'{\i}} Fiala and Jan Kratochv{\'{\i}}l.
\newblock Locally constrained graph homomorphisms---structure, complexity, and
  applications.
\newblock {\em Computer Science Review}, 2(2):97--111, 2008.

\bibitem{HN2004}
Pavol Hell and Jaroslav Ne{\v{s}}et{\v{r}}il.
\newblock {\em Graphs and homomorphisms}, volume~28 of {\em Oxford Lecture
  Series in Mathematics and its Applications}.
\newblock Oxford University Press, Oxford, 2004.

\bibitem{Hoffman1972}
Alan~J. Hoffman.
\newblock On limit points of spectral radii of non-negative symmetric integral
  matrices.
\newblock In {\em Graph theory and applications}, volume 303 of {\em Lecture
  Notes in Mathematics}, pages 165--172. Springer, Berlin, 1972.

\bibitem{HS1975}
Alan~J. Hoffman and John~Howard Smith.
\newblock On the spectral radii of topologically equivalent graphs.
\newblock In {\em Recent advances in graph theory ({P}roc. {S}econd
  {C}zechoslovak {S}ympos., {P}rague, 1974)}, pages 273--281. Academia, Prague,
  1975.

\bibitem{HV2006}
Sophie Huczynska and Vincent Vatter.
\newblock Grid classes and the {F}ibonacci dichotomy for restricted
  permutations.
\newblock {\em Electron. J. Combin.}, 13(1):~Research paper 54, 14 pp.
  (electronic), 2006.

\bibitem{KSW1996}
Andr{\'e}~E. K{\'e}zdy, Hunter~S. Snevily, and Chi Wang.
\newblock Partitioning permutations into increasing and decreasing
  subsequences.
\newblock {\em J.~Combin. Theory Ser.~A}, 73(2):353--359, 1996.

\bibitem{LS1973}
P.~W.~H. Lemmens and J.~J. Seidel.
\newblock Equiangular lines.
\newblock {\em J.~Algebra}, 24:494--512, 1973.

\bibitem{LP1973}
L.~Lov{\'a}sz and J.~Pelik{\'a}n.
\newblock On the eigenvalues of trees.
\newblock {\em Period. Math. Hungar.}, 3:175--182, 1973.

\bibitem{MT2004}
Adam Marcus and G{\'a}bor Tardos.
\newblock Excluded permutation matrices and the {S}tanley-{W}ilf conjecture.
\newblock {\em J.~Combin. Theory Ser.~A}, 107(1):153--160, 2004.

\bibitem{MV2003}
Maximillian~M. Murphy and Vincent~R. Vatter.
\newblock Profile classes and partial well-order for permutations.
\newblock {\em Electron. J. Combin.}, 9(2):~Research paper 17, 30 pp.
  (electronic), 2003.

\bibitem{Shearer1989}
James~B. Shearer.
\newblock On the distribution of the maximum eigenvalue of graphs.
\newblock {\em Linear Algebra Appl.}, 114/115:17--20, 1989.

\bibitem{Simic1987a}
Slobodan~K. Simi{\'c}.
\newblock On the largest eigenvalue of unicyclic graphs.
\newblock {\em Publ. Inst. Math. (Beograd) (N.S.)}, 42(56):13--19, 1987.

\bibitem{Simic1989}
Slobodan~K. Simi{\'c}.
\newblock On the largest eigenvalue of bicyclic graphs.
\newblock {\em Publ. Inst. Math. (Beograd) (N.S.)}, 46(60):1--6, 1989.

\bibitem{Smith1970}
John~H. Smith.
\newblock Some properties of the spectrum of a graph.
\newblock In {\em Combinatorial {S}tructures and their {A}pplications}, pages
  403--406. Gordon and Breach, New York, 1970.

\bibitem{Stankova1994}
Zvezdelina~E. Stankova.
\newblock Forbidden subsequences.
\newblock {\em Discrete Math.}, 132(1-3):291--316, 1994.

\bibitem{Vatter2010b}
Vincent Vatter.
\newblock Permutation classes of every growth rate above 2.48188.
\newblock {\em Mathematika}, 56(1):182--192, 2010.

\bibitem{Vatter2011}
Vincent Vatter.
\newblock Small permutation classes.
\newblock {\em Proc. Lond. Math. Soc.}, 103(5):879--921, 2011.

\bibitem{WatonThesis}
Stephen~D. Waton.
\newblock {\em On permutation classes defined by token passing networks,
  gridding matrices and pictures: Three flavours of involvement}.
\newblock PhD thesis, University of St Andrews, 2007.

\bibitem{WikiEnumPermClasses}
Wikipedia.
\newblock Enumerations of specific permutation classes.
\newblock
  \href{http://en.wikipedia.org/wiki/Enumerations\_of\_specific\_permutation\_classes}{http:/$\!$/en.wikipedia.org/wiki/Enumera}
  \href{http://en.wikipedia.org/wiki/Enumerations\_of\_specific\_permutation\_classes}{tions\_of\_specific\_permutation\_classes}.

\bibitem{WN2007}
Renee Woo and Arnold Neumaier.
\newblock On graphs whose spectral radius is bounded by {$\frac32\sqrt2$}.
\newblock {\em Graphs Combin.}, 23(6):713--726, 2007.

\bibitem{YFI2010}
J.~Yarkony, C.~Fowlkes, and A.~Ihler.
\newblock Covering trees and lower-bounds on quadratic assignment.
\newblock In {\em IEEE Conference on Computer Vision and Pattern Recognition},
  pages 887--894, 2010.

\end{thebibliography}

\end{document}